\documentclass[a4paper,11pt]{article}
\usepackage{mathrsfs}
\usepackage{latexsym}
\usepackage{amsthm}
\usepackage{amsfonts}
\usepackage[usenames]{color}
\usepackage{graphicx}
\usepackage{amsthm}
\usepackage{mathrsfs}
\usepackage{dsfont}
\usepackage{indentfirst}
\usepackage{enumerate}
\usepackage[square, comma, sort&compress, numbers]{natbib}
\usepackage{amsmath,amssymb,bm,esdiff}
\usepackage[toc,page,title]{appendix}
\usepackage{etoolbox}
\usepackage{geometry}
\usepackage{type1cm}
\usepackage{tikz}
\usepackage{float}
\usetikzlibrary{shapes.geometric, arrows.meta, positioning, calc, patterns, intersections}
\newtheoremstyle{mythm}{1.5ex plus 1ex minus .2ex}{1.5ex plus 1ex
minus .2ex}{\kai}{\parindent}{\song\bfseries}{}{1em}{}
\numberwithin{equation}{section}

\newtheorem{thm}{Theorem}[section]
\newtheorem{lemma}{Lemma}[section]
\newtheorem{proposition}{Proposition}[section]
\newtheorem{remark}{Remark} [section]
\newtheorem{corollary}{Corollary}[section]
\allowdisplaybreaks[4]

\title{Regularity of Free Boundary for Monge-Amp\`{e}re Obstacle Problem up to the Boundary}
\author{Hanfei Han and Genggeng Huang}
\date{\vspace{-2em}}
\begin{document}
\maketitle
\abstract{}
In this paper, we study the regularity of free boundaries for Monge-Amp\`{e}re obstacle problem up to the boundary. We mainly focus on how the growth order of boundary value near the contact point impact the regularity. We also find an interesting phenomena that under quadratic growth boundary value, the free boundary enjoys better regularity in high dimensions than in two dimension.
\section{Introduction}
\indent In this paper we study the regularity of free boundaries for Monge-Amp\`{e}re obstacle problem,
\begin{equation}
\left\{
\begin{aligned}
\det D^2v&=\chi_{\{v>0\}}\quad\mathrm{in}\quad \Omega\\
v&=v_0\qquad\quad\mathrm{on}\quad\partial\Omega.
\end{aligned}
\right.
\label{1.1}
\end{equation}
Here $\Omega\subseteq\{x_n<0\}$ is a bounded convex domain with a flat boundary part $S$ such that
\begin{equation}
0\in S\subseteq\{x_n=0\}.
\label{1.2}
\end{equation}
$v_0$ is a non-negative convex function in $C^{1,\beta}(\bar{\Omega})$ for some $\beta\in(1-\frac{2}{n},1)$,
\begin{equation}
v_0(0)=0,\quad v_0>0\quad\mathrm{on}\quad\partial\Omega\backslash\{0\}.
\label{1.3}
\end{equation}
Here, the free boundary refers to $\Gamma:=\partial\{v=0\}$, where $\{v=0\}=\{x\in\bar{\Omega}|v(x)=0\}$.
\par Then the existence of solutions to \eqref{1.1} follows from \cite{2}. And by the argument in \cite{6}, the set $\{v=0\}$ has a non-empty interior or $\{v=0\}=\{0\}$. Since in the latter case the free boundary degenerates to a point $0$, we only consider the former case(See Figure \ref{1}). We can also show by the argument in \cite{6} that $v$ is locally strictly  convex in $\Omega\bigcap\{v>0\}$ and hence $v\in C^\infty(\Omega\bigcap\{v>0\})$ (see \cite{7} for instance).
\begin{figure}[H]
\centering
\caption{}
\begin{tikzpicture}[scale=0.4]
\draw[->] (-7,0) -- (7,0) node[right] {$x'$};
\draw[->] (0,-8) -- (0,1) node[above] {$x_n$};
\draw[thick]
(-2,0) -- (5,0) 
.. controls (8,-4) and (4,-6) .. (0,-5.7) 
.. controls (-6,-5) and (-5,-4) .. (-2,0); 
\fill[pattern=north east lines] 
      (0,0) .. controls (1.8,0) and (1.5,-1.3) .. (0,-1.2) 
       .. controls (-1.0,-1.0) and (-1.2,0) .. (0,0);
\draw 
(0,0) .. controls (1.8,0) and (1.5,-1.3) .. (0,-1.2) 
.. controls (-1.0,-1.0) and (-1.2,0) .. (0,0);
\node at (0.5,-2.5) {\Huge$\Omega$}; 
\node at (-0.8,0.45) {\Large$S$};
\node (Ulabel) at (3,-0.5) {$\{v=0\}$};
\draw[->] (Ulabel) -- (0.6,-0.6);
\node (rylabel) at (2,1.1) {$\Gamma$};
\draw[->] (rylabel) -- (0.78, -0.14);  
\end{tikzpicture}
\label{1}
\end{figure}
This type of Monge-Amp\`{e}re obstacle problem was initially studied by Savin in \cite{2}. More precisely, he studied
\begin{equation*}
\left\{
\begin{aligned}
\det D^2v&=f\chi_{\{v>0\}}\quad\mathrm{in}\quad\Omega\\
v&=1\qquad\quad\mathrm{on}\quad\partial\Omega
\end{aligned}
\right.
\end{equation*}
for positive functions $f$ on $\bar{\Omega}$, where the free boundary locates strictly inside $\Omega$. He proved that the free boundary is strictly convex and $C^{1,\alpha}$ provided $f$ is bounded away from $0$ and $+\infty$, and $C^{1,1}$ provided $f=1$. Recently the higher order regularity for free boundary was studied in \cite{5} and \cite{1}. The free boundary is smooth and analytic, if $f$ is respectively smooth and analytic, which was established for dimension 2 in \cite{5} by G\'{a}lvez, Jim\'{e}nez, and Mira and for all dimensions in \cite{1} by Huang, Tang and Wang later. A related interesting model is\\
\begin{equation*}
\det D^2v=gv^q\chi_{\{v>0\}},\qquad v\geq0\qquad\mathrm{in}\quad\Omega
\end{equation*}
with constant $q\in[0,n)$ and positive function $g$ on $\Omega$. This was initially studied by Daskalopoulos and Lee for dimension 2 in \cite{9}. They proved the free boundary is smooth provided $v>0$ on $\partial\Omega$ and $g=1$. Later, in \cite{3}, Jin, Tu and Xiong proved that for all dimensions, the strictly convex part of free boundary inside $\Omega$ is $C^{1,\gamma}$ and $C^{2,\gamma}$ for some constant $\gamma\in(0,1)$, if $g$ is respectively bounded away from $0$ and $+\infty$ and H\"{o}lder continuous. As a complement, we mention that \cite{3} also gives the complete classification result on the global solution of Monge-Amp\`{e}re obstacle problem
\begin{equation*}
\det D^2v=\chi_{\{v>0\}},\quad v\geq0\quad\mathrm{in}\quad\mathbb{R}^n.
\end{equation*}
However, little has been known about the regularity of free boundary when it touches the fixed boundary $\partial\Omega$ so far.
\par To study this, we consider a natural model with the obstacle problem \eqref{1.1} and assumption \eqref{1.3} as $v_0$ having only one zero point on $\partial\Omega$, say, $0$. Then we can mainly focus on how the free boundary $\Gamma$ meets the fixed boundary $\partial\Omega$ at the contact point $0$. In this paper, we assume that $v_0$ has a fixed growth order of at least $2$ near $0$ on $S$, i.e.
\begin{equation*}
v_0(x)\sim|x|^{2+\frac{2}{n}(\alpha-1)},\quad x\in\partial\Omega\quad \mathrm{for}\quad\mathrm{some}\quad\alpha\ge 1.
\end{equation*}
Throughout the present paper, we use the following notation. For two quantities $a$, $b$, we denote $a\lesssim b$ if there is a constant $C$ such that $a\leq Cb$;  $a\gtrsim b$ if there is a positive constant $C$ such that $a\geq Cb$; and $a\sim b$ if  both $a\lesssim b$, $a\gtrsim b$ hold.

We mention here that the flatness assumption of part $S$ of the fixed boundary can be weakened to some extent while we do not deal with it in our main results to omit the trivial details (see Remark \ref{Remark 1.3.}).
\par The most technical part is that the boundary value $v_0(x)\sim|x|^2$ for $x\in\partial\Omega$. The first thing is that the asymptotic behavior of the free boundary up to the fixed boundary is exactly 
$$-x_n=h(x')\sim |x'|(-\ln |x'|)^{-n/2}.$$ The proof of this asymptotic behavior needs delicate construction of barrier functions. Due to this asymptotic behavior, the scaling argument fails.   We need to do some careful calculation to control the second derivative of the free boundary near $0$. A surprising phenomenon is discovered that the free boundary up to $0$ enjoys better regularity in $n\ge 3$ than in $n=2$. This is quite different from classical theory in elliptic partial differential equations as one usually expects better regularity in lower dimension than in higher dimension under the same conditions.  
\begin{thm}
\label{Theorem 1.1.} 
Let $v,v_0$ be given by \eqref{1.1}-\eqref{1.3}. In addition,  $v_0(x)\sim|x|^2$ on $\partial\Omega$,  there holds 
\begin{itemize}  
\item[(i)] $n=2$, then $\Gamma\in C^1$ and  $\Gamma\notin C^{1,Dini}$;
 \item[(ii)] $n\geq3$, then $\Gamma\in C^{1,Dini}$.
\end{itemize}
\end{thm}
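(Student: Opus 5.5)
The plan is to reduce Theorem \ref{Theorem 1.1.} to the precise asymptotics of the free boundary near $0$ stated in the introduction. We write $\Gamma$ near $0$ as the graph $-x_n=h(x')$, with $h$ convex in the sense inherited from the convexity of $\{v=0\}$. We then study $\nabla h$ near $0$ using this asymptotic together with interior regularity of $\Gamma$. The argument has four steps, carried out in order.

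\textbf{Step 1: the profile of $h$.} We show
\[
h(x')\sim |x'|\,(-\ln|x'|)^{-n/2}.
\]
For this we construct explicit sub- and supersolutions of \eqref{1.1} in a neighbourhood of $0$. They should be adapted to boundary data of size $|x'|^2$ on $S$ and built from functions of the form $\phi(x_n,|x'|)$. Their logarithmic correction is chosen so that $\det D^2\approx 1$ in the positivity set and so that they vanish exactly on a region $\{-x_n\le c|x'|(-\ln|x'|)^{-n/2}\}$. The comparison principle for the obstacle problem, applied on truncated domains, then traps $\Gamma$ between two such curves. The exponent $n/2$ comes from balancing: the Hessian in the $x'$ directions is of order $1$ (from the quadratic data), so the normal direction must carry the determinant $1$ through a thin layer of width $h$. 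Integrating this layer condition produces the logarithm.

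\textbf{Step 2: $\Gamma\in C^1$.} Since $h\ge0$, $h(0)=0$ and $h(x')=o(|x'|)$, $\Gamma$ is tangent to $\{x_n=0\}$ at $0$. Away from $0$, the interior and boundary regularity results (Savin; Jin--Tu--Xiong type $C^{1,\gamma}$ estimates) give $\Gamma\in C^{1,\gamma}$ locally. For continuity of $\nabla h$ at $0$, we rescale around a point $x_0'$ with $|x_0'|=r$ by the anisotropic map $x'\mapsto x'/r$, $x_n\mapsto x_n/h(x_0')$. The rescaled solutions have normalized data and a free boundary which is uniformly Lipschitz, and in fact $C^{1,\gamma}$ with uniform constants. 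This yields
\[
|\nabla h(x_0')|\lesssim h(x_0')/r\sim(-\ln r)^{-n/2}\to0.
\]

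\textbf{Step 3: the Dini criterion.} We aim to show that the modulus of continuity of $\nabla h$ at $0$ is comparable to $\omega(r)=(-\ln r)^{-n/2}$. The upper bound comes from Step 2, and the lower bound from convexity together with Step 1, since $\nabla h(x')\cdot x'/|x'|\ge h(x')/|x'|$. Then
\[
\int_0 \frac{\omega(r)}{r}\,dr=\int^\infty t^{-n/2}\,dt ,
\]
which diverges for $n=2$ and converges for $n\ge3$. This gives (i) and, on the normal–radial level, (ii). For the $C^{1,Dini}$ claim in (ii) we also need the oscillation of $\nabla h$ between two nearby points $x',y'$, not only its size at $0$. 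We split into the case $|x'-y'|\ge|x'|/2$, where the bound $|\nabla h|\lesssim\omega(|x'|)$ suffices, and the case $|x'-y'|<|x'|/2$, where the rescaled $C^{1,\gamma}$ (or $C^2$) estimates apply. The latter requires a bound of the type
\[
|D^2h(x')|\lesssim \frac{\omega(|x'|)}{|x'|}.
\]

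\textbf{Main obstacle.} We expect the hardest steps to be the sharp barriers in Step 1 and the second-derivative control just described. Because $h$ is not a power, the rescaled problems do not converge to a single homogeneous profile, so the scaling argument must be run with constants uniform in $r$. To get this uniformity, we would first use the barriers to show that the rescaled solutions are uniformly comparable to a fixed model solution. Then we would apply Savin-type $C^{1,\gamma}$ and $C^{2,\gamma}$ free-boundary estimates in unit-size domains, which requires uniform nondegeneracy. The slowly varying factor $\omega$ contributes only lower-order errors, of relative size $1/(-\ln r)$.
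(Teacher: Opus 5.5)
Your skeleton matches the paper's: asymptotics of $h$, then $|\nabla h(x')|\lesssim(-\log|x'|)^{-n/2}$ near $0$, the lower bound $\nabla h(x')\cdot x'\ge h(x')$ from convexity, and the integral $\int^\infty t^{-n/2}\,dt$. Modulo Step 1, this gives (i). Part (ii), however, rests on $|D^2h(x')|\lesssim\omega(|x'|)/|x'|$, and the route you propose for that bound fails.

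Take $H=h(x_0')$ and $v^r(y)=r^{-2}v(ry',Hy_n)$. The data are then normalized, but
\[
\det D^2v^r=\Big(\frac{H}{r}\Big)^2\chi_{\{v^r>0\}}\sim(-\log r)^{-n}\chi_{\{v^r>0\}}\longrightarrow 0 .
\]
Any amplitude that normalizes the determinant instead, namely $\kappa=r^{2-2/n}H^{2/n}$, makes the data of size $(-\log r)|y'|^2$. So no rescaled family is uniformly nondegenerate, and there is no fixed model solution. Local limits solve $\det D^2=0$; for example, $(y_n+|y'|)_+^2$ fits normalized data and a unit-scale free boundary. All the nondegeneracy lives in the $O(1/\log(1/r))$ corrections you treat as lower order. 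Savin and Jin--Tu--Xiong type estimates require $\lambda\le\det D^2\le\Lambda$ on normalized sections, so their constants are not uniform in $r$. This is exactly why the paper says the scaling argument fails. The same unjustified uniform $C^{1,\gamma}$ claim appears in Step 2, but there it is harmless, because convexity alone gives $|\nabla h(x')|\le\sup_{B'_{2|x'|}}h/|x'|$.

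The missing idea is that no sharp curvature bound is needed. Since the target modulus is only logarithmic, a crude bound $|D^2h(x')|\le C|x'|^{-C}$ suffices, provided you split differently. If $|x_0'|\ge r^{1/(2C)}$, then $|D^2h|\lesssim r^{-1/2}$ on $[x_0',y_0']$, so the oscillation is $\lesssim r^{1/2}\ll(-\log r)^{-n/2}$. If $|x_0'|<r^{1/(2C)}$, both points lie in $B'_{2r^{1/(2C)}}$, where convexity and the asymptotics give $|\nabla h|\lesssim(-\log r)^{-n/2}$. Your split at $|x'-y'|\sim|x'|/2$ cannot close with such a bound.

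The paper proves the polynomial bound (Proposition~\ref{Proposition 6.1.}) without any blow-up, in three moves:
\begin{enumerate}
\item It approximates $v$ by smooth solutions $v^\varepsilon$.
\item It applies the Pogorelov estimate of Lemma~\ref{Proposition 5.1.} in a region cut off by a tilted hyperplane. Together with the nondegeneracy $v\ge\mu_1|x'|^2$ of Corollary~\ref{Corollary 4.5}, this gives $|D^2v^\varepsilon|\le C|x_0'|^{-C}$ on the level set $\{v=|x_0'|^A\}$.
\item It passes to the hodograph function $\tilde v^\varepsilon$ and uses Savin's estimate with the polynomial weight $\tilde v_1^2+A$ (Lemma~\ref{Lemma 5.2}). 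An exponential weight would only give $e^{|x_0'|^{-C}}$, which does not survive the split.
\end{enumerate}

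Some secondary points:
\begin{itemize}
\item Step 1 is only a heuristic. The paper's lower bound optimizes the family $w_\alpha$ pointwise in $\alpha$, and its upper bound uses a partial Legendre transform.
\item Your vanishing region has the inequality reversed: near $0$, $\{v=0\}=\{-x_n\ge h(x')\}$.
\item Interior free-boundary regularity away from $0$ first requires strict convexity of $\{v=0\}$ (Lemma~\ref{Lemma 3.1}).
\end{itemize}
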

We also have the following higher regularity results if the boundary value $v_0$ has higher order growth behavior at $0$.
\begin{thm}
\label{Theorem 1.2.}
Let $v,v_0$ be given by \eqref{1.1}-\eqref{1.3}. In addition,  $v_0(x)\sim |x|^{2+\frac{2}{n}(\alpha-1)}$ on $\partial\Omega$, $\alpha>1$, there holds
\begin{itemize}
    \item[(i)] $\alpha\in(1,2)$, then $\Gamma\in C^{1,\alpha-1}$ and $\Gamma\notin C^{1,1}$;
    \item[(ii)] $\alpha=2$, then $\Gamma\in C^{1,1}$;
    \item[(iii)] $\alpha>2$, then $\Gamma\in C^2$.
\end{itemize}
\end{thm}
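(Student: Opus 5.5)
The plan is to reduce everything to the asymptotic shape of the free boundary near $0$, then upgrade this to control of the first and second derivatives of the graph function by rescaling. Near $0$ write $\Gamma=\{-x_n=h(x')\}$ with $h\ge0$ and $h(0)=0$. Since $\{v=0\}$ is convex and touches $S$ only at $0$ (by \eqref{1.3}), $h$ is convex.

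\textbf{Step 1: asymptotics of $h$.} First I would show that with $v_0\sim|x|^{2+\frac2n(\alpha-1)}$ on $\partial\Omega$,
\[ h(x')\sim|x'|^{\alpha}\quad(\alpha>1). \]
The heuristic is a one-dimensional balance. At a point $(x',0)$ with $|x'|=r$, the solution must climb from $0$ at depth $h(x')$ to $v_0\sim r^{2+\frac2n(\alpha-1)}$ at $x_n=0$. The Monge--Amp\`ere mass of the section of height $t$ is comparable to $|S_t|^2$ by the standard volume estimate for $\det D^2v=1$. Matching the tangential width $r$ with the normal width $h(r)$ gives the exponents.

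For the upper bound I would compare from above with an explicit subsolution of the form
\[ w=\big(x_n+\lambda|x'|^{\alpha}\big)_+^{a}\,|x'|^{b}, \]
or with sums of rescaled one-dimensional profiles $c\,(x_n-\ell(x'))_+^2$, whose Monge--Amp\`ere measure is at least $\chi_{\{w>0\}}$. The comparison principle for the obstacle problem then gives $v\ge w$, hence $\{v=0\}\subset\{w=0\}$. The lower bound comes from a convex supersolution built from the one-homogeneous-type solutions of $\det D^2=1$ in the cone over the set $\{-x_n>C|x'|^\alpha\}$.

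\textbf{Step 2: derivatives of $h$.} Rescale by
\[ v_r(y)=r^{-2-\frac2n(\alpha-1)}\,v\big(ry',\,r^{\alpha}y_n\big), \]
which is normalized so that $\det D^2v_r=\chi$, since the Jacobian exponents match, $2(2+\frac2n(\alpha-1))-2(n-1)-2\alpha=\ldots$ consistently with the chosen scaling. In the annulus $|y'|\sim1$ the rescaled free boundary sits away from the fixed boundary, so the interior results of Savin (\cite{2}) and \cite{3} give uniform $C^{1,1}$ bounds and nondegeneracy of the rescaled free boundary $y_n=-h_r(y')$. Scaling back gives
\[ |\nabla h|\lesssim r^{\alpha-1},\qquad |D^2h|\lesssim r^{\alpha-2}\quad\text{on }|x'|\sim r. \]
These yield the three cases of Theorem \ref{Theorem 1.2.}:
\begin{itemize}
\item[(i)] For $\alpha\in(1,2)$, $\nabla h$ is $C^{\alpha-1}$. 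It is not $C^{1,1}$ because the lower bound $h\gtrsim r^{\alpha}$ with $\alpha<2$ forces $D^2h$ to be unbounded.
\item[(ii)] For $\alpha=2$, $D^2h$ is bounded, so $\Gamma\in C^{1,1}$.
\item[(iii)] For $\alpha>2$, $D^2h\to0$, and $D^2h(0)=0$ gives $\Gamma\in C^2$ at $0$. Away from $0$, smoothness is interior.
\end{itemize}

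\textbf{Step 3: the case of Theorem \ref{Theorem 1.1.}.} Here the paper's asymptotics are
\[ h\sim r(-\ln r)^{-n/2}, \]
and I take these as established (or prove them by logarithmically corrected barriers as in Step 1). The scaling is anisotropic with the ratio $(-\ln r)^{-n/2}$, which is only slowly varying. Rescaling on dyadic annuli still gives local uniform estimates, up to errors that depend on the ratio between neighbouring scales tending to $1$. This yields the modulus of continuity
\[ |\nabla h(x')-\nabla h(0)|\sim\omega(r)=(-\ln r)^{-n/2}, \]
with $\nabla h(0)=0$. In particular the oscillation of $\nabla h$ on $|x'|\sim r$ is of this size, and the lower bound holds because $h/r\sim\omega(r)$ and $h$ is convex. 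The Dini integral is
\[ \int_0 \frac{\omega(r)}{r}\,dr=\int \frac{dr}{r\,(-\ln r)^{n/2}} , \]
which converges iff $n/2>1$. So it converges for $n\ge3$ and diverges for $n=2$, giving both statements of Theorem \ref{Theorem 1.1.}.

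\textbf{Main obstacle.} I expect the hardest step to be the barrier construction for the sharp two-sided asymptotics in the quadratic case, including the $\log$ factor. The scaling argument degenerates there, so derivative control must be carried out scale by scale with explicit error tracking.
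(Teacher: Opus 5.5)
\textbf{Genuine gap: the barrier step.} Your architecture matches the paper's. You prove two-sided asymptotics $h\sim|x'|^\alpha$ by barriers and comparison. You then use the anisotropic rescaling $v^r(x)=r^{-2-\frac2n(\alpha-1)}v(rx',r^\alpha x_n)$, which preserves the equation because $n\bigl(2+\frac2n(\alpha-1)\bigr)=2(n-1)+2\alpha$ (your displayed exponent identity is miswritten). Interior free-boundary regularity at unit scale gives $|D^2h|\lesssim|x'|^{\alpha-2}$, and the three cases follow, including the correct argument that $h\gtrsim|x'|^\alpha$ with $\alpha<2$ excludes $C^{1,1}$. The gap is Step 1, which is where the paper's proof does its real work (Lemma \ref{Lemma 7.2}), and which you only gesture at.

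Relative to $h$ your labels are swapped. A subsolution below $v$ gives $\{v=0\}\subset\{w=0\}$, i.e.\ the lower bound $h\gtrsim|x'|^\alpha$. The upper bound needs a supersolution above $v$.

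Your one explicit candidate works only for some $\alpha$. Write $w=|x'|^m g(t)$ with $t=x_n/|x'|^\alpha$, $g=(t+\lambda)_+^a$, and $k=2+\frac2n(\alpha-1)$. Then $\det D^2w$ is $|x'|^{n(m-k)}$ times a function of $t$. Staying below $v_0\sim|x'|^k$ and keeping $\det D^2w$ bounded below near $0$ forces $m=k$. Then, normalizing $\lambda=1$,
\[
\det D^2w=a(t+1)^{an-n-1}\bigl[k(t+1)-\alpha at\bigr]^{n-2}\bigl[k(k-1)(a-1)(t+1)-\alpha(\alpha-1)a(a-1)t-(k-\alpha)^2a(t+1)\bigr].
\]
A positive lower bound as $t\to-1^+$ forces $1<a\le\frac{n+1}{n}$. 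At $t=0$ the last bracket is $k(k-1)(a-1)-(k-\alpha)^2a$, which tends to $a-2<0$ as $\alpha\to1^+$. So in every dimension, for $\alpha$ close to $1$, your $w$ is not even convex near $\{x_n=0\}$. For $n\ge4$ it also fails for large $\alpha$, since then $\frac{k(k-1)}{n+1}<(k-\alpha)^2$. The ``sums of one-dimensional profiles'' and the ``one-homogeneous solutions in the cone over $\{-x_n>C|x'|^\alpha\}$'' are not checkable constructions, so the supersolution side is missing entirely.

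\textbf{How the paper closes it.} The paper keeps $g=(t+1)^{\frac{n+1}{n}}$ only on $[-1,t_0]$ with $t_0$ close to $-1$, where $\det D^2w\sim1$. On $[t_0,0)$ it takes $g$ to be the $C^1$-matched solution of
\[
k(k-1)gg''-\alpha(\alpha-1)tg'g''-(k-\alpha)^2(g')^2=\lambda^*,
\]
with $\lambda^*>0$ small. The real issue is an a priori bound on $g'$ up to $t=0$. For $\alpha\ge2$ this is $g'\lesssim(-t)^{-\varepsilon}$; for $1<\alpha<2$ it comes from $gg''<(g')^2$, so $g'/g$ is nonincreasing.

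This produces one convex $w$ with $w(rx',r^\alpha x_n)=r^kw(x)$, $\lambda\chi_{\{w>0\}}\le\det D^2w\le\Lambda\chi_{\{w>0\}}$ and $\{w=0\}=\{x_n\le-|x'|^\alpha\}$. After stretching in $x_n$ and multiplying by constants, it serves both as the subsolution and as the supersolution $Aw(x',A^{-n}x_n)$ (Proposition \ref{Proposition 7.1}). Its scale invariance also gives the uniform two-sided control of $v^r$ that your Step 2 compactness tacitly needs.

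\textbf{Aside on Step 3.} Step 3 lies outside this statement. For Theorem \ref{Theorem 1.1.} the paper explicitly says the scaling argument fails there, because the aspect ratio $(-\log r)^{-n/2}\to0$. It replaces the rescaling with the lossy bound $|D^2h|\le C|x'|^{-C}$ of Proposition \ref{Proposition 6.1.}.
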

The followings are some remarks on the main theorems.
\begin{remark}
\label{Remark 1.3.} Flatness of $S$ could be replaced by $S\in  C^2$ in Theorem \ref{Theorem 1.1.}. More generally, in Theorem \ref{Theorem 1.2.}, we can also replace the flatness of $S$ by the condition that $S$ is $k-$th tangent to the plane $x_n=0$ for some $k$ which is determined by $\alpha$. Since the proof is essentially the same  but more involved, we will omit it here. 
\end{remark}
\begin{remark}
\label{Remark 1.6}
For any smooth function $f$ bounded away from $0$ and $+\infty$, replacing $\det D^2 v=\chi_{\{v>0\}}$ by $\det D^2v=f\chi_{\{v>0\}}$ in \eqref{1.1}, both Theorem \ref{Theorem 1.1.} and Theorem \ref{Theorem 1.2.} would still hold.
\end{remark}
\begin{remark}
\label{Remark 1.4.}
Consider the classical obstacle problem
\begin{equation*}
\left\{
\begin{aligned}
\Delta v&=\chi_{\{v>0\}}\quad\mathrm{in}\quad\Omega\\
v&=\tilde{v}_0\qquad\mathrm{on}\quad\partial\Omega. 
\end{aligned}
\right.
\end{equation*}
Following the example of \cite{8,ShahgholianUraltsevaWeiss2007}, for all dimensions, one cannot expect the $C^{1,Dini}$ regularity of free boundary even we impose non-negative smooth data $\tilde{v}_0$ with infinite vanishing order.  The convexity of the solution $v$ and the set $\{v=0\}$ in Monge-Amp\`{e}re obstacle problem may play an important role in the improvement of regularity of the free boundary up to the boundary. 
\end{remark}
\begin{remark} In \cite{ChenLiuWang2023arxiv}, Chen-Liu-Wang studied the global regularity of free boundary $\Gamma$ up to the boundary for the partial optimal transport problem. This problem is quite different from our problem. The free boundary $\Gamma$ touches the fixed boundary transversally and enjoys optimal $C^{1,1-\epsilon}$ regularity, $\forall \epsilon\in (0,1)$. 
\end{remark}
Our paper is organized as follows. In Section 2, we introduce some preliminary results on Monge-Amp\`{e}re equation and the corresponding obstacle problem. In Section 3, we prove the $C^1$ regularity of free boundary. Then the case $v_0(x)\sim|x|^2$, $x\in\partial\Omega$ and the case $v_0(x)\sim|x|^{2+\frac{2}{n}(\alpha-1)}$, $x\in\partial\Omega$ for some $\alpha>1$ will be dealt with in Section 4 and Section 5 respectively.
\section{Preliminaries}
In this section, we state some preliminaries. We adapt some more general settings here. Suppose $\Omega$ is a bounded convex domain in $\mathbb{R}^n(n\geq2)$, $v_0$ be a non-negative function on $\partial\Omega$, and $\{v=0\}$ has non-empty interior, where $v$ solves \eqref{1.1}. The constant $C$ would change line by line throughout the paper unless otherwise specified.
\begin{lemma}
\label{Lemma 2.1}
(\cite[Theorem 8.3.2]{4}) Suppose $\nu$ is a finite measure in $\Omega$, $\alpha\in(0,1)$, $\varphi\in C^\alpha(\bar{\Omega})$ is a convex function, and $u$ solves
\begin{equation*}
\left\{
\begin{aligned}
\det D^2u=\nu\quad&in\quad \Omega\\
u=\varphi\quad&on\quad\partial\Omega. 
\end{aligned}
\right.
\end{equation*}
Then $u\in C^{\bar{\alpha}}(\bar{\Omega})$ for $\bar{\alpha}=\min\{\alpha,\frac{1}{n}\}$, and
\begin{equation*}
[u]_{C^{\bar{\alpha}}(\bar{\Omega})}\leq C([\nu(\Omega)]^\frac{1}{n}+[\varphi]_{C^\alpha(\bar{\Omega})}),
\end{equation*}
where $C$ is a positive constant depending on $n$ and $diam(\Omega)$.
\end{lemma}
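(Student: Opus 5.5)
The plan is the standard two-part argument for global Hölder continuity of Monge–Ampère solutions: an interior-type oscillation bound from the Aleksandrov maximum principle, and a boundary modulus obtained from barriers built out of the boundary data $\varphi$. The two bounds are then combined. We may normalize $\operatorname{diam}(\Omega)\le d$ and assume $\nu(\Omega)<\infty$.

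\textbf{Step 1: barriers at boundary points.} Fix $x_0\in\partial\Omega$. Since $\varphi$ is convex and $C^\alpha$ on $\bar\Omega$, we have $u\le \varphi$ by the comparison principle, because $\varphi$ is a convex supersolution ($\det D^2\varphi\ge 0$ need not dominate $\nu$, so instead we use the convex envelope). More precisely, $u\le h$, where $h$ is the convex solution with $\det D^2 h=0$ and $h=\varphi$ on $\partial\Omega$. Since $\varphi$ is convex, $h=\varphi$, so $u\le\varphi$ in $\Omega$. For the lower bound we use a cone plus the Aleksandrov estimate. Let $w$ solve $\det D^2 w=\nu$ with $w=0$ on $\partial\Omega$. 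Then $u\ge \varphi+w$, by comparison, since $\det D^2(\varphi+w)\ge\det D^2 w=\nu$ and the boundary values agree. Hence
\[
|u(x)-\varphi(x)|\le |w(x)| .
\]

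\textbf{Step 2: estimating $w$.} The Aleksandrov maximum principle gives $|w(x)|^n\le C(n)\,d^{n-1}\operatorname{dist}(x,\partial\Omega)\,\nu(\Omega)$. This yields
\[
|w(x)|\le C\,\nu(\Omega)^{1/n}\operatorname{dist}(x,\partial\Omega)^{1/n},
\]
which is the source of the exponent $1/n$. As a result, for $x\in\Omega$ and $x_0\in\partial\Omega$,
\[
|u(x)-u(x_0)|\le [\varphi]_{C^\alpha}|x-x_0|^\alpha+C\nu(\Omega)^{1/n}|x-x_0|^{1/n}.
\]

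\textbf{Step 3: interior points.} This is the main obstacle, because the Aleksandrov bound is only at the boundary. For $x,y\in\Omega$, let $\delta=|x-y|$. If $\delta\ge \tfrac12\operatorname{dist}(x,\partial\Omega)$, we pass through the nearest boundary point and apply Step 2 twice. Otherwise, we use convexity. Because $u$ is convex, its oscillation along the line through $x,y$ is controlled by the slopes, and these are bounded by difference quotients to the boundary. Concretely, extend the segment from $x$ through $y$ until it exits at $z\in\partial\Omega$, with $r=|z-x|\ge\operatorname{dist}(x,\partial\Omega)$. Convexity gives
\[
u(y)-u(x)\le \frac{\delta}{r}\bigl(u(z)-u(x)\bigr).
\]
By Step 2, $u(z)-u(x)\lesssim r^{\bar\alpha}$, up to the constant factor $[\varphi]_{C^\alpha}+\nu(\Omega)^{1/n}$. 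The ratio is then bounded by $\delta r^{\bar\alpha-1}\le\delta^{\bar\alpha}$, since $\delta<r$. Swapping the roles of $x$ and $y$ gives the reverse inequality.

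\textbf{Conclusion.} Combining the cases gives $[u]_{C^{\bar\alpha}}\le C\bigl(\nu(\Omega)^{1/n}+[\varphi]_{C^\alpha}\bigr)$, with $C=C(n,d)$. Here we used $\delta^{\alpha}\le d^{\alpha-\bar\alpha}\delta^{\bar\alpha}$. The delicate point is justifying the comparison in Step 1 for a general measure $\nu$. This relies on the comparison principle for Aleksandrov solutions, which holds because $\varphi+w$ has Monge–Ampère measure at least $\nu$ by the superadditivity $\det(A+B)^{1/n}\ge\det A^{1/n}+\det B^{1/n}$ at the level of measures.
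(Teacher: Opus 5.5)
Your Step 1 has a genuine error: the upper bound $u\le\varphi$ is false. The solution $h$ of $\det D^2h=0$ with $h=\varphi$ on $\partial\Omega$ is the largest convex function with that trace, namely the supremum of affine functions lying below $\varphi$ on $\partial\Omega$. It is not $\varphi$. Since $\varphi$ is convex on $\bar\Omega$, its supporting affine function at any interior point is admissible, so $h\ge\varphi$, and $h\neq\varphi$ as soon as $\mu_\varphi\neq0$. Comparison therefore gives only $u\le h$. Comparison with $\varphi$ from above would need $\nu\ge\mu_\varphi$, which is not assumed. Concretely, for $\Omega=B_1$, $\varphi=|x|^2$, $\nu=0$, one has $u=h\equiv1>\varphi$ in $B_1$ while $w\equiv0$. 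So the inequality $|u-\varphi|\le|w|$ at the end of Step 1 fails, and Step 2 as written rests on it.

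Your lower bound $u\ge\varphi+w$ is correct. This is exactly where convexity of $\varphi$ on $\bar\Omega$ enters, via $\mu_{\varphi+w}\ge\mu_w=\nu$. Your Aleksandrov estimate for $w$ is also right. The missing upper bound at a boundary point comes from convexity of $u$ itself, by the same device you use in Step 3. For $x_0\in\partial\Omega$ and $x\in\Omega$, let $z\in\partial\Omega$ be the exit point of the ray from $x_0$ through $x$. Then
\[
u(x)\le\varphi(x_0)+\frac{|x-x_0|}{|z-x_0|}\bigl(\varphi(z)-\varphi(x_0)\bigr)\le\varphi(x_0)+[\varphi]_{C^\alpha}|x-x_0|^\alpha ,
\]
using $|x-x_0|\le|z-x_0|$ and $\alpha<1$. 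Combine this with
\[
u(x)\ge\varphi(x)+w(x)\ge\varphi(x_0)-[\varphi]_{C^\alpha}|x-x_0|^\alpha-C\nu(\Omega)^{1/n}|x-x_0|^{1/n}.
\]
Together they give exactly the boundary estimate stated at the end of your Step 2, and Step 3 then goes through unchanged. The case split in Step 3 is also unnecessary: the exit point always satisfies $r\ge\delta$, so the one-sided convexity bound works for every pair $x,y$. (The paper itself only quotes this lemma from the literature without proof.)
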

\begin{lemma}
\label{Lemma 2.2}
Suppose the non-negative convex functions $\underline{w}$ and $\overline{w}\in C(\bar{\Omega})$ satisfy
\begin{equation*}
\det D^2\underline{w}\geq\chi_{\{\underline{w}>0\}}\quad\text{in}\quad\Omega,\quad \det D^2\overline{w}\leq\chi_{\{\overline{w}>0\}}\quad\text{in}\quad\Omega,
\end{equation*}
and
\begin{equation*}
\underline{w}\leq\overline{w}\quad on\quad\partial\Omega,
\end{equation*}
then $\underline{w}\leq\overline{w}$ in $\bar{\Omega}$.
\end{lemma}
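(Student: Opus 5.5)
We argue by contradiction using the comparison principle for Monge--Amp\`{e}re measures. Suppose the open set
\begin{equation*}
D:=\{x\in\Omega:\ \underline{w}(x)>\overline{w}(x)\}
\end{equation*}
is nonempty. Since $\underline{w}\le\overline{w}$ on $\partial\Omega$ and both functions are continuous on $\bar{\Omega}$, we have $\underline{w}=\overline{w}$ on $\partial D$.

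\textbf{Step 1: the right-hand sides compare on $D$.} On $D$ we have $\underline{w}>\overline{w}\ge 0$, so $D\subseteq\{\underline{w}>0\}$. Hence, in the Aleksandrov sense,
\begin{equation*}
\det D^2\underline{w}\ge 1\ge\chi_{\{\overline{w}>0\}}\ge\det D^2\overline{w}\quad\text{in } D.
\end{equation*}
Thus $\underline{w}$ is a subsolution and $\overline{w}$ a supersolution of the same inequality on $D$, with equal boundary values on $\partial D$.

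\textbf{Step 2: perturb to get strict inequality.} The Aleksandrov comparison principle needs a strict gap in the measures, so we tilt the subsolution. Fix a point $x_0\in D$ and set
\begin{equation*}
\underline{w}_\varepsilon(x):=\underline{w}(x)+\varepsilon\bigl(|x-x_0|^2-d^2\bigr),\qquad d=\operatorname{diam}\Omega,
\end{equation*}
with $\varepsilon>0$ small. Then $\underline{w}_\varepsilon\le\underline{w}=\overline{w}$ on $\partial D$. At the point $x_0$ we still have $\underline{w}_\varepsilon(x_0)>\overline{w}(x_0)$ once $\varepsilon d^2<\underline{w}(x_0)-\overline{w}(x_0)$. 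Consequently the open set
\begin{equation*}
D_\varepsilon:=\{x\in D:\ \underline{w}_\varepsilon>\overline{w}\}
\end{equation*}
is nonempty, and $\underline{w}_\varepsilon=\overline{w}$ on $\partial D_\varepsilon$.

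For Monge--Amp\`{e}re measures of convex functions, adding a strictly convex quadratic gives
\begin{equation*}
\mu_{\underline{w}+\varepsilon|x-x_0|^2}\ge\mu_{\underline{w}}+(2\varepsilon)^n\,dx.
\end{equation*}
This follows from the superadditivity of $\det^{1/n}$, or from the subgradient inclusion $\partial(u+q)\supseteq\partial u+\nabla q$ combined with Brunn--Minkowski. Therefore
\begin{equation*}
\mu_{\underline{w}_\varepsilon}>\mu_{\overline{w}}\quad\text{on } D_\varepsilon,
\end{equation*}
in the sense that $\mu_{\underline{w}_\varepsilon}(E)\ge\mu_{\overline{w}}(E)+(2\varepsilon)^n|E|$ for every Borel set $E\subseteq D_\varepsilon$.

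\textbf{Step 3: derive the contradiction.} We apply the standard Aleksandrov comparison principle on $D_\varepsilon$ (cf.\ \cite{4}): if $u,w$ are convex, $u\le w$ on $\partial D_\varepsilon$, and $\mu_u\ge\mu_w$ in $D_\varepsilon$, then $u\le w$ in $D_\varepsilon$. Its proof rests on the gradient-image inclusion
\begin{equation*}
\partial\overline{w}(G)\subseteq\partial\underline{w}_\varepsilon(G),\qquad G=\{\underline{w}_\varepsilon-\overline{w}>\delta\},\ \delta>0 \text{ small},
\end{equation*}
where $G\Subset D_\varepsilon$ is nonempty. This inclusion yields $\mu_{\overline{w}}(G)\ge\mu_{\underline{w}_\varepsilon}(G)$, which contradicts the strict gap from Step 2 because $|G|>0$. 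Hence $D=\emptyset$, i.e.\ $\underline{w}\le\overline{w}$ in $\bar{\Omega}$.

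\textbf{Main point of care.} The essential observation is Step 1: the discontinuous right-hand side $\chi_{\{w>0\}}$ is monotone in the right direction on the set where $\underline{w}>\overline{w}$. There the subsolution sees the full value $1$, while the supersolution's measure is at most $1$. The remaining difficulty is purely technical: the strict-gap perturbation of Step 2 must be handled with Aleksandrov measures rather than pointwise Hessians, since $\underline{w}$ and $\overline{w}$ are only continuous and convex.
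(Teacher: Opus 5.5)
Your argument is correct in substance and is the standard one the paper relies on (it defers the proof to Jin--Tu--Xiong). On $\{\underline{w}>\overline{w}\}$ one has $\underline{w}>0$, hence $\det D^2\underline{w}\ge 1\ge\det D^2\overline{w}$ there, and the Aleksandrov comparison principle with your quadratic tilt finishes the proof.

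One slip in Step 3 needs fixing. Since $\overline{w}+\delta<\underline{w}_\varepsilon$ in $G$ with equality on $\partial G$, it is the lower function that has the larger gradient image, so the correct inclusion is $\partial\underline{w}_\varepsilon(G)\subseteq\partial\overline{w}(G)$. This is what gives $\mu_{\overline{w}}(G)\ge\mu_{\underline{w}_\varepsilon}(G)$ as you then state; the inclusion as you wrote it would give the opposite inequality and no contradiction.
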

\begin{lemma}
\label{Lemma 2.3}
Suppose $v$ is a solution of \eqref{1.1}, and $x_0\in\Omega$ is an exposed point of $\{v=0\}$, then the free boundary $\partial\{v=0\}$ is $C^{2,\gamma}$ in a small neighborhood of $x_0$ for some $\gamma\in(0,1)$.
\end{lemma}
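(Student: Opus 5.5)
We want Lemma \ref{Lemma 2.3}: near an exposed point $x_0\in\Omega$ of $\{v=0\}$, the free boundary is $C^{2,\gamma}$. The plan is to reduce to the interior theory of Savin \cite{2} and Jin--Tu--Xiong \cite{3}. For the problem $\det D^2v=g v^q\chi_{\{v>0\}}$ with $q=0$, \cite{3} shows that the strictly convex part of the free boundary inside $\Omega$ is $C^{2,\gamma}$ when $g$ is H\"older; here $g\equiv1$. So the work is to show that an exposed point of the convex set $\{v=0\}$ lies in that strictly convex part, with all of the local analysis staying inside $\Omega$.

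\textbf{Step 1: a supporting hyperplane touching only at $x_0$.} Since $x_0$ is exposed, there is a hyperplane $\ell$ with $\{v=0\}\cap\ell=\{x_0\}$ and $\{v=0\}$ lying on one side of $\ell$. Choose coordinates so that $x_0=0$, $\ell=\{x_n=0\}$ and $\{v=0\}\subset\{x_n\le0\}$.

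\textbf{Step 2: local separation.} We need a local version of strict convexity of $\{v=0\}$ at $x_0$. Consider the sections $S_t=\{x\in\{v=0\}: x_n\ge -t\}$. By compactness and exposedness, $\operatorname{diam}S_t\to0$ as $t\to0$. Hence for small $t$ the slab region $\{-t<x_n\}\cap\{v=0\}$ lies in a small ball $B_\rho(x_0)\Subset\Omega$. From here the relevant analysis happens in a fixed interior neighborhood, and the boundary data $v_0$ do not interact with it.

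\textbf{Step 3: apply the interior regularity.} In $B_\rho(x_0)$, $v$ solves $\det D^2v=\chi_{\{v>0\}}$. Localizing \cite{3}, or Savin's argument \cite{2}, at points of $\Gamma$ near $x_0$ gives the regularity: the sections of $v$ at level $h$ centered near $x_0$ are balanced, the blow-up limits are the global solutions classified in \cite{3}, and this gives $C^{1,\alpha}$ and then $C^{2,\gamma}$ of $\Gamma\cap B_{\rho/2}(x_0)$. Nondegeneracy requires $\{v=0\}$ to have nonempty interior, which is assumed.

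\textbf{Main obstacle.} The delicate point is checking that the hypothesis of \cite{3} ("strictly convex part") holds near $x_0$, not only at $x_0$. Exposedness gives strict convexity at $x_0$ alone. Points of $\Gamma$ near $x_0$ could in principle lie on flat faces. I would handle this with Savin's argument \cite{2}: if $\Gamma$ contained a segment in $B_\rho(x_0)$, then comparison with barriers via Lemma \ref{Lemma 2.2}, in the form of $\det D^2 w=1$ solutions vanishing on a segment, would contradict the growth $v\sim \operatorname{dist}^2$ away from the zero set. Once no segments are present, the result is $C^{2,\gamma}$ in a neighborhood, with $\gamma$ depending on $n$ and the local geometry.
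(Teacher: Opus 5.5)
Your overall strategy, reducing to the interior theory of \cite{3}, is all the paper does. It gives no argument of its own for Lemma \ref{Lemma 2.3} and simply quotes \cite[Sections 1--2]{3}. The gap is in the extra step you add to ``verify the hypothesis'' of \cite{3}, namely excluding segments of $\Gamma$ in $B_\rho(x_0)$. The argument you propose for it does not work, for two reasons.

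First, the premise $v\sim\mathrm{dist}^2$ is false for this equation. Off a smooth strictly convex free boundary the solution grows like $\mathrm{dist}^{1+\frac1n}$. You can see this from the radial solution with a ball as zero set, where $(v')^{n-1}v''\approx 1$ near $r=1$, or from the profile $g_\alpha(t)=(t+1)^{\frac{n+1}{n}}$ in the barriers of Lemma \ref{Lemma 4.2}. In any case, no a priori growth bound near $\Gamma$ is available before the regularity of $\Gamma$ is known.

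Second, a barrier or Alexandrov comparison near a segment yields only the \emph{lower} bound $\sup v\gtrsim\delta^{2-\frac2n}$ on a $\delta$-neighbourhood of the segment, exactly as in Lemma \ref{Lemma 3.1}. A contradiction needs an independent upper bound $o(\delta^{2-\frac2n})$. In Lemma \ref{Lemma 3.1} that bound comes from the decay of $v_0$ at the point where the segment hits $\partial\Omega$. Near an interior point there is no such bound, and even the true rate $\delta^{1+\frac1n}$ is not $o(\delta^{2-\frac2n})$ once $n\ge 3$. For general data, which is the setting of Section 2, nothing excludes faces of $\Gamma$ inside $\Omega$; this is exactly why Lemma \ref{Lemma 3.1} needs a decay hypothesis on $v_0$. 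So a soft growth comparison cannot do this job.

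Your Step 2 does not help either. Exposedness controls only the cap $\{v=0\}\cap\{x_n\ge -t\}$. It is compatible with faces of $\Gamma$ accumulating at $x_0$, for instance the rulings of a cone with apex $x_0$, which is still an exposed point. In fact the paper applies Lemma \ref{Lemma 2.3} in Corollary \ref{Corollary 2.5} precisely at an exposed point lying on a flat face of $\{v=0\}$, and derives the contradiction from the $C^1$ conclusion. So the lemma must be used with exposedness at the single point $x_0$ as its only hypothesis. Local strict convexity near $x_0$ is part of its conclusion, not something you can check beforehand. Appealing to Corollary \ref{Corollary 2.5} to push segments to $\partial\Omega$ would be circular, since it depends on Lemma \ref{Lemma 2.3}.

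To close the gap, drop the segment-exclusion step and quote the exposed-point regularity result of \cite{3} directly, as the paper does. Alternatively, reproduce the argument there that obtains $C^{1,\gamma}$ regularity and strict convexity of $\Gamma$ near $x_0$ from exposedness at $x_0$ alone, and only then upgrade to $C^{2,\gamma}$.
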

\begin{lemma}
\label{Lemma 2.4}
Suppose $v$ is a solution of \eqref{1.1}, and $x_0\in\Omega$ is an extreme point of $\{v=0\}$, then $x_0$ is an exposed point of $\{v=0\}$.
\end{lemma}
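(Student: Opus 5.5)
We argue by contradiction, following the classical Caffarelli argument that sets where a solution coincides with an affine function cannot have interior extreme points when the Monge--Amp\`ere measure is bounded below. The difference here is that $\det D^2v$ is bounded below only on $\{v>0\}$, so the argument must be localized to the positivity region.

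\emph{Setting up the face.} Let $K=\{v=0\}$, which is convex, and let $x_0\in\Omega$ be an extreme point of $K$. Suppose $x_0$ is not exposed. Choose a supporting hyperplane of $K$ at $x_0$; after a rotation and translation, $x_0=0$, $K\subset\{x_n\le 0\}$, and the face $F=K\cap\{x_n=0\}$ contains $x_0$. Among all such supporting hyperplanes, pick one for which $F$ has minimal dimension. Since $x_0$ is not exposed, $F\neq\{x_0\}$. On the other hand, $x_0$ is still an extreme point of $F$.

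\emph{Reducing to a thin region.} Working inside the affine hull of $F$, we cut off a small cap of $F$ near $x_0$ by a hyperplane. That is, we find an affine function $\ell'$ with $\ell'(x_0)>0$ and $\{\ell'\ge 0\}\cap F$ a small neighbourhood of $x_0$ in $F$, compactly inside $\Omega$. Minimality of $\dim F$ should give the following: for small $\delta>0$, the tilted affine function
\[
\ell_\delta=\delta\,(x_n+\eta\,\ell'),\qquad \eta>0 \text{ small},
\]
has a section $S_\delta=\{v<\ell_\delta\}$ that is compactly contained in a small ball around $x_0$ in $\Omega$. The point is that in every direction leaving $F$ within the supporting plane, $v$ grows away from the cap, and in the $x_n$ direction $v\ge 0$ while $\ell_\delta<0$ deep inside $\{x_n<0\}$.

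\emph{Main estimate.} We then compare two quantities.

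1. Alexandrov's maximum principle applied to $v-\ell_\delta$ on $S_\delta$ bounds $\sup_{S_\delta}(\ell_\delta-v)$ from above. Since $\ell_\delta-v$ vanishes on $\partial S_\delta$ and $v(x_0)=0<\ell_\delta(x_0)=\delta\eta\,\ell'(x_0)$, this sup is at least of order $\delta\eta$.

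2. A lower bound for the Monge--Amp\`ere mass of $v$ on $S_\delta$ comes from the part of $S_\delta$ lying in $\{x_n>0\}\subset\{v>0\}$, where $\det D^2v=1$. This part is nonempty and has volume comparable to a fixed fraction of $|S_\delta|$ near $x_0$, because $x_0\in\partial K$ and $K\subset\{x_n\le0\}$.

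Writing $S_\delta$ in John-normalized form and comparing the two bounds as $\delta\to0$ gives the standard contradiction. The width of $S_\delta$ in the $x_n$ direction shrinks like $\delta$, but its extent along $F$ stays of fixed size, so the volume and mass bounds become incompatible. This is precisely Caffarelli's argument that segments on which $v$ is affine propagate to $\partial\Omega$.

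\emph{Main obstacle.} The hardest step is guaranteeing that a fixed proportion of each section $S_\delta$ lies in $\{v>0\}$, since $\det D^2v=0$ on $K$. I would handle it by using the minimal-dimension choice of the supporting plane together with convexity of $K$: the set $K\cap S_\delta$ is squeezed into the half-space $\{x_n\le 0\}$, and near the cap its $x_n$-extent is $o(\delta)$-thin relative to the section. If this proportion estimate fails, I would instead appeal to Lemma \ref{Lemma 2.3} in a blow-up limit, using the fact that non-exposed boundary points are limits of exposed points.
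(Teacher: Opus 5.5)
\paragraph{Verdict.} The paper gives no proof of this lemma; it simply imports it from Jin--Tu--Xiong \cite{3}. Your proposal has a genuine gap at its central step. Caffarelli's extreme-point mechanism does not survive the degeneracy of the measure $\mu_v=\chi_{\{v>0\}}\,dx$ on the fat set $K=\{v=0\}$.

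\paragraph{Why the main estimate fails.} Any section $\{v<\ell\}$ with $\ell(x_0)>0$ contains $K\cap\{\ell>0\}$. This is a relatively open piece of $K$ around $x_0$, and it has positive volume because $\{v=0\}$ has nonempty interior; on it $\mu_v=0$.
\begin{itemize}
\item In your construction this piece, $K\cap\{x_n+\eta\ell'>0\}\subset S_\delta$, does not depend on $\delta$. So $|S_\delta|$ is bounded below and the $x_n$-width of $S_\delta$ does not shrink with $\delta$.
\item Meanwhile $|S_\delta\cap\{v>0\}|\le|\{0<v<C\delta\}|\to0$. So the claim that a fixed proportion of $S_\delta$ lies in $\{v>0\}$ is not just hard to prove, it is false.
\item Your items 1 and 2 are both lower bounds for $\mu_v(S_\delta)$, so they cannot contradict each other.
\item Caffarelli's argument needs the upper bound $\mu_v(\tfrac12 S)\,|S|\lesssim(\sup_S|v-\ell|)^n$ in John-normalized coordinates, together with $\mu_v$ comparable to Lebesgue measure on the section. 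Here the central part of the normalized section lies inside $K$, so that bound gives nothing.
\item Alexandrov's estimate at $x_0$ gives only $(\delta\eta)^n\lesssim \mathrm{dist}(x_0,\partial S_\delta)\,|S_\delta\cap\{v>0\}|$, and this is perfectly consistent. For example, suppose $v\lesssim \mathrm{dist}(\cdot,K)^p$ near $x_0$ for some $p>1$; the barriers of Lemma \ref{Lemma 4.2} suggest $p=1+\frac1n$. Then both factors on the right are $\gtrsim(\delta\eta)^{1/p}$, and $(\delta\eta)^{2/p}\gg(\delta\eta)^n$.
\end{itemize}
This is exactly why the classical result needs $\det D^2u\ge\lambda$ on the whole section. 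The minimal-dimension choice of the supporting plane does nothing to repair this.

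\paragraph{Why the fallback fails.} Lemma \ref{Lemma 2.3} is purely qualitative: the neighbourhood and the constants depend on the exposed point. Knowing that exposed points are dense among extreme points (Straszewicz) therefore transfers nothing to their limit. The ``stadium'' picture is precisely the configuration the lemma must exclude: a smooth, strictly convex arc made entirely of exposed points meeting a flat face at a non-exposed extreme point. Convex geometry plus qualitative regularity at exposed points is fully consistent with it.

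\paragraph{What is missing.} You need an argument that uses the equation on the positivity side of the face to force $\partial K$ to bend away from its supporting plane near $x_0$. Two possible forms:
\begin{itemize}
\item A quantitative strict-convexity (or $C^{1,\gamma}$) estimate valid at any point of $K$ that admits arbitrarily small slices $K\cap\{\ell>0\}$. Every extreme point of the compact convex set $K$ has such slices, by the separation argument you use implicitly.
\item A barrier or comparison argument in the spirit of Lemma \ref{Lemma 4.2}.
\end{itemize}
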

We refer to \cite[Section 1-2]{3} for the proof of Lemma \ref{Lemma 2.2}, Lemma \ref{Lemma 2.3} and Lemma \ref{Lemma 2.4}.
\begin{corollary}
\label{Corollary 2.5}
Suppose $v$ is a solution of \eqref{1.1}, and $[x_1,x_2]$ is a segment on $(\partial\{v=0\})\bigcap\Omega$, then there exists $x_0\in\partial\Omega$ such that $[x_0,x_1]\subseteq\partial\{v=0\}$.
\end{corollary}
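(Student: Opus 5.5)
The idea is to extend the segment $[x_1,x_2]$ through $x_1$, staying inside $\partial\{v=0\}$, until it reaches $\partial\Omega$. We use Lemma \ref{Lemma 2.4} and Lemma \ref{Lemma 2.3} to show that no endpoint of a maximal boundary segment can lie in the interior $\Omega$.

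\textbf{Setup.} Write $K=\{v=0\}$. Since $v$ is convex and nonnegative, $K$ is a closed convex set, and by assumption it has non-empty interior. Let $L$ be the line through $x_1,x_2$. Consider the maximal segment $I=L\cap\partial K$ that contains $[x_1,x_2]$. This set is connected: a point of $L$ lying between two points of $L\cap\partial K$ that are both on the face through them belongs to that face. More precisely, $[x_1,x_2]$ lies in some supporting hyperplane $H$ of $K$, and $L\cap K\subseteq H\cap K\subseteq\partial K$. Since $K$ is compact (because $\bar\Omega$ is bounded), $L\cap K$ is a compact segment $[y_1,y_2]\supseteq[x_1,x_2]$. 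Label its endpoints so that $y_1$ lies on the side of $x_1$, i.e. $x_1\in[y_1,x_2]$.

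\textbf{Key step.} We claim $y_1\in\partial\Omega$. Suppose instead $y_1\in\Omega$. The face $F=H\cap K$ is a compact convex set contained in $\partial K$. We use the fact that an extreme point of a face is an extreme point of $K$. Two cases arise.

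\emph{Case 1: $y_1$ is an extreme point of $F$.} Then $y_1$ is an extreme point of $K$ lying in $\Omega$. By Lemma \ref{Lemma 2.4} it is exposed, so by Lemma \ref{Lemma 2.3} the free boundary is $C^{2,\gamma}$ near $y_1$. The proof of Lemma \ref{Lemma 2.3} in \cite{3} comes from strict convexity of the free boundary at exposed points. Because $\partial K$ contains the segment $[y_1,x_2]$ emanating from $y_1$, I expect the local description obtained there (uniform convexity of $\partial K$ near $y_1$) to be contradicted. I would make this precise by quoting the strict convexity part of the statement in \cite{3}.

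\emph{Case 2: $y_1$ is not an extreme point of $F$.} Then $y_1$ lies in the relative interior of some segment in $F$ transversal to $L$. Here I would pass to an extreme point instead. By Krein--Milman, $F$ is the convex hull of its extreme points. If all extreme points of $F$ were in $\Omega$, each would be exposed, with a $C^{2,\gamma}$ strictly convex boundary nearby. An extreme point $e$ of $F$ with $\dim F\ge1$ is an endpoint of a segment in $F\subseteq\partial K$, which gives the same contradiction as in Case 1. Hence every extreme point of $F$ lies on $\partial\Omega$.

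\textbf{Concluding Case 2.} It remains to produce a point $x_0$ with $[x_0,x_1]\subseteq\partial K$. Take any extreme point $x_0\in\partial\Omega$ of $F$. Since $F$ is convex and $x_0,x_1\in F$, we get $[x_0,x_1]\subseteq F\subseteq\partial K$, which proves the corollary. (In Case 1 the contradiction shows $y_1\in\partial\Omega$, and then $x_0=y_1$ works.) In fact this argument subsumes Case 1 entirely: just pick an extreme point of the face $F$.

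\textbf{Main obstacle.} The delicate step is the claim that an interior extreme point cannot be the endpoint of a segment in $\partial K$. Lemma \ref{Lemma 2.3} as stated only gives $C^{2,\gamma}$ regularity, and $C^{2,\gamma}$ alone does not exclude flat pieces. My first approach is to use the strict convexity near exposed points established in \cite{3}. The alternative is a direct comparison argument: near an interior point of a flat piece, subtract the supporting affine function and use Lemma \ref{Lemma 2.2} with a barrier. This would show that $v$ grows only like $d^2$ on one side, which is incompatible with $\det D^2v=1$ and $v$ vanishing on a whole segment ending at an exposed point.
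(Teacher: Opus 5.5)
Your skeleton matches the paper's: take a supporting hyperplane $H$ of $K=\{v=0\}$ containing $[x_1,x_2]$, the face $F=H\cap K$, and an extreme point $e$ of $F$ (hence of $K$). Then claim $e\in\partial\Omega$ and conclude $[e,x_1]\subseteq F\subseteq\partial K$ by convexity of $F$. As you note yourself, this subsumes Case 1, so the maximal-segment setup is unnecessary. The gap is in the only step with content, namely ruling out $e\in\Omega$. You never prove it. Instead you defer it either to a strict-convexity statement you expect to find in the work of Jin--Tu--Xiong, which is not among the lemmas quoted in Section 2 and which you have not checked, or to a comparison argument described only in words. Your stated reason for needing extra input is that the $C^{2,\gamma}$ regularity of Lemma~\ref{Lemma 2.3} cannot exclude flat pieces. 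That is a misdiagnosis: you do not need to exclude flat pieces near $e$, only a nontrivial face passing through the exposed point $e$ itself.

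The missing idea is to use the exposing hyperplane. If $e\in\Omega$, Lemma~\ref{Lemma 2.4} gives a hyperplane $P$ supporting $K$ with $P\cap K=\{e\}$. The hyperplane $H$ also supports $K$ at $e$, and $H\cap K=F\supseteq[x_1,x_2]$ has more than one point, so $H\neq P$. Thus $K$ has two distinct supporting hyperplanes at $e$. But Lemma~\ref{Lemma 2.3} makes $\partial K$ a $C^1$ hypersurface near $e$. A convex set with non-empty interior whose boundary is differentiable at $e$ has exactly one supporting hyperplane there, the tangent plane. This contradiction gives $e\in\partial\Omega$, and it is precisely how the paper argues; no strict convexity from the literature and no barrier construction is needed. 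One minor logical point concerns Case 2: you should run the argument for each extreme point separately. The hypothesis ``all extreme points of $F$ lie in $\Omega$'' does not yield the conclusion ``every extreme point lies on $\partial\Omega$'', although the pointwise argument above does.
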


\begin{proof}
Let $H$ be a closed half space such that $$\{v=0\}\subset H,\quad [x_1,x_2]\subseteq \partial H.$$
Then, there exists $x_0\in \partial H$ such that $x_0$ is an extreme point of $\{v=0\}\bigcap \partial H$ relative in $\partial H$.
Also, $x_0$ is an extreme point of $\{v=0\}$ in $\mathbb{R}^n$.
\par If $x_0\in\Omega$, then Lemma 2.4 implies that $x_0$ is an exposed point of $\{v=0\}$, and it follows that $\partial\{v=0\}$ is $C^1$ near $x_0$ from Lemma 2.3.
Therefore, we can choose another closed half space $\tilde H$ such that $\{v=0\}\subset\tilde{H}$ and $\{v=0\}\bigcap\partial\tilde{H}=\{x_0\}$.
This implies $\{v=0\}$ has two different supporting planes $H,\tilde H$ at $x_0$ which contradicts $v\in C^1$ near $x_0$. This ends the proof of  Corollary   \ref{Corollary 2.5}.
\end{proof}
\begin{lemma}
\label{Lemma 2.6}
Suppose $v$ is a solution of \eqref{1.1}, $x_0\in\Omega$, $v(x_0)=0$, and $[x_0,y]$ is not contained in $\partial\{v=0\}$ for any $y\in\partial\Omega$, then $\partial v(x_0)=\{0\}$.
\end{lemma}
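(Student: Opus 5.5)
We argue by contradiction. Since $v\ge 0$ is convex and $v(x_0)=0$ with $x_0\in\Omega$, the point $x_0$ is an interior minimum, so $0\in\partial v(x_0)$. Suppose that $\partial v(x_0)$ contains some $p\neq 0$. The supporting hyperplane inequality gives $v(x)\ge p\cdot(x-x_0)$ on $\Omega$. Hence $\{v=0\}\subseteq H:=\{x: p\cdot(x-x_0)\le 0\}$, and $x_0\in\partial H$. In particular $x_0\in\partial\{v=0\}$, and $H$ is a supporting half space of the convex set $\{v=0\}$ at $x_0$.

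Next consider the face $F:=\{v=0\}\cap\partial H$. It is a compact convex set containing $x_0$, and we split into two cases according to its size.

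\emph{Case 1: $F=\{x_0\}$.} Then $x_0$ is an exposed point of $\{v=0\}$ lying in $\Omega$. By Lemma~\ref{Lemma 2.3}, $\partial\{v=0\}$ is $C^{2,\gamma}$ near $x_0$, and the free boundary regularity near such points comes from \cite{3}. There $v$ behaves like the one-dimensional profile $v\approx c\,d^{2}$ in the distance $d$ to the free boundary near a regular free boundary point, so $\nabla v$ is continuous up to $\Gamma$ from the positive side and vanishes on $\Gamma$; with $v=0$ on the other side, $v\in C^1$ near $x_0$. The same $C^1$ fact was already used in the proof of Corollary~\ref{Corollary 2.5}. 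Then $\partial v(x_0)=\{\nabla v(x_0)\}=\{0\}$, contradicting $p\neq0$.

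\emph{Case 2: $F$ contains a point $x_1\neq x_0$.} Then $[x_0,x_1]\subseteq F\subseteq\partial\{v=0\}$. Extend this segment inside the face $F$ beyond $x_0$ as far as possible. If $x_0$ lies in the relative interior of some segment $[x_1,x_2]\subseteq F\cap\Omega$, apply the argument of Corollary~\ref{Corollary 2.5} within $F$. That is, take an extreme point of $F$ (relative to $\partial H$). Extreme points of $F$ are extreme points of $\{v=0\}$. An interior extreme point $z$ would be exposed by Lemma~\ref{Lemma 2.4}, and the same $C^1$ argument as in Case 1 then shows that $\{v=0\}$ has the unique supporting plane $\partial H$ at $z$. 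Making this contradict the existence of a second supporting plane at an exposed point forces all extreme points of $F$ to lie on $\partial\Omega$.

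Then $F$ is the convex hull of its extreme points, all on $\partial\Omega$. So $x_0$ lies in the convex hull of points of $F\cap\partial\Omega$. A Carathéodory-type argument along rays from $x_0$ inside $F$ then produces $y\in F\cap\partial\Omega$ with $[x_0,y]\subseteq F\subseteq\partial\{v=0\}$. Indeed, since $F$ is convex, we may take $y$ to be any point of $F\cap\partial\Omega$, which is nonempty because $F$ is compact, not a single point, and has no interior extreme points. This contradicts the hypothesis, so $\partial v(x_0)=\{0\}$.

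The main obstacle is Case 1 together with the reduction in Case 2: we must justify that at an exposed (or extreme) interior point the free boundary is $C^1$, and hence admits a unique supporting plane, and that $v$ is differentiable there with zero gradient. I would take this from Lemma~\ref{Lemma 2.3} and the interior regularity of \cite{3}, in the same way Corollary~\ref{Corollary 2.5} does.
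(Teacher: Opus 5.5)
Your Case~2 is sound. It is the argument of Corollary~\ref{Corollary 2.5} applied to the face $F=\{v=0\}\cap\partial H$, and it does yield $y\in F\cap\partial\Omega$ with $[x_0,y]\subseteq F\subseteq\partial\{v=0\}$.

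The gap is Case~1, and it is the heart of the lemma, not a side case. Where the paper uses Lemma~\ref{Lemma 2.6}, the free boundary is strictly convex by Lemma~\ref{Lemma 3.1}, so $F=\{x_0\}$ always. In that case you deduce $\partial v(x_0)=\{0\}$ from ``$v\in C^1$ near $x_0$'', but nothing available gives differentiability of $v$. Lemma~\ref{Lemma 2.3} is a statement about the set $\partial\{v=0\}$. It only tells you that the normal cone of $\{v=0\}$ at $x_0$ is a ray $\{t\nu:t\ge0\}$, hence $\partial v(x_0)\subseteq\{t\nu:t\ge 0\}$. It cannot exclude $\partial v(x_0)=[0,a\nu]$ with $a>0$. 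Indeed, $v=(|x|-1)_+$ is convex and nonnegative with a smooth, strictly convex zero set, yet $\partial v(x_0)=[0,x_0]$ at every $x_0\in\partial B_1$. Likewise, the contradiction in the proof of Corollary~\ref{Corollary 2.5} only uses that the set $\partial\{v=0\}$ is $C^1$ at $x_0$ (two distinct supporting hyperplanes), not that $v$ is. So in the case that matters your argument assumes what is to be proved. It also never uses $\det D^2v=1$ in $\{v>0\}$, which is the only thing that can exclude such a kink.

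The heuristic you give for the missing fact is also wrong. Near a free boundary point with positive curvature, $\det D^2 v=1$ forces $v\sim d^{1+\frac1n}$, not $c\,d^2$. With $v\sim d^2$ the tangential second derivatives are $O(d)$ and $\det D^2v\to0$; compare the profile $(t+1)^{\frac{n+1}{n}}$ in Lemma~\ref{Lemma 4.2}.

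The paper does not derive Lemma~\ref{Lemma 2.6} from free boundary regularity at all. It defers to the method of Savin's Proposition~2.8 in \cite{2}, which works with the equation itself. To repair your proof you need such an argument in Case~1. Assume $p\neq0$ and $F=\{x_0\}$, and consider the sections $S_h=\{v<\tfrac12 p\cdot(x-x_0)+h\}$. These are compactly contained in $\Omega$ and shrink to $x_0$ as $h\to0$. They lie in the slab $\{|p\cdot(x-x_0)|<2h\}$. They contain the region $\{p\cdot(x-x_0)>0\}\cap S_h\subseteq\{v>0\}$, where $\det D^2v=1$. The contradiction has to come from the Monge--Amp\`ere equation on these sections, not from the shape of $\{v=0\}$.
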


The proof of Lemma \ref{Lemma 2.6} could be completed with the same method as in \cite[Proposition 2.8]{2}.
\section{$C^1$ Regularity of Free Boundary}
This section is devoted to show the $C^1$ regularity of free boundary.  We will first show the strict convexity of $\{v=0\}$ in Lemma \ref{Lemma 3.1}.
\begin{lemma}
\label{Lemma 3.1}
(Strict convexity of free boundary) Suppose $\Omega$ is a bounded convex domain in $\mathbb{R}^n$ and $v_0$ is a non-negative continuous function on $\partial\Omega$  with isolated zeros $\{x_0,\cdots,x_N\}$. Let $v$ be a solution of \eqref{1.1} and $\{v=0\}$ has non-empty interior.  In addition, there holds
\begin{equation*}
v_0(x)=o(|x-x_i|^{2-\frac{2}{n}})
\end{equation*}
for $x\in\partial\Omega$ as $x\to x_i$, $i=0,\cdots,N$. Then the free boundary $\partial\{v=0\}$ is strictly convex.
\end{lemma}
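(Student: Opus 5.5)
We argue by contradiction. Suppose $\partial\{v=0\}$ is not strictly convex, so it contains a nontrivial segment $[x_1,x_2]$. If the segment meets $\Omega$, then, shrinking it if needed, Corollary \ref{Corollary 2.5} gives a point $y\in\partial\Omega$ with $[y,x_1]\subseteq\partial\{v=0\}$. If instead the whole segment lies on $\partial\Omega$, then $v_0=0$ along it, which is impossible because the zeros of $v_0$ are isolated. In either case we obtain a segment $[y,x_1]\subseteq\partial\{v=0\}$ with $y\in\partial\Omega$, $v(y)=0$, and hence $v_0(y)=0$. Therefore $y=x_i$ for some $i$.

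Next we use a supporting hyperplane. Let $H$ be a closed half space containing $\{v=0\}$ with $[y,x_1]\subseteq\partial H$. After a rotation and translation, take $y=x_i=0$, the segment along the $e_1$ direction, and $H=\{x_n\le 0\}$ (interchanging signs as needed). Since $\{v=0\}$ has nonempty interior, there is a small ball $B\subset\{v=0\}$ near the segment.

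The key step is to construct a subsolution that forces $v$ to be positive near the segment, yielding a contradiction. Consider the set $\{v>0\}$ near the segment $\ell=[0,x_1]$ on the side $x_n>0$. Take a thin domain $D$ lying between $\partial H$ and $\partial\Omega$, adjacent to $\ell$, of width $\delta$ in the directions transversal to $e_1$ and of length comparable to $|x_1|$. On $D$, $\det D^2v=1$, and $v=0$ along $\ell$ and on the portion of $\partial D$ inside $\{v=0\}$.

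We would compare $v$ with an upper barrier. By convexity and $\det D^2 v=1$, a section of $v$ at height $t$ around a point of $\ell$ must have volume $\gtrsim t^{n/2}$. Since the section is long in the $e_1$ direction (length $\sim |x_1|$) but contained in the region above $H$, its width transversal to $H$ is controlled. The geometric argument of Savin \cite{2} / Caffarelli's segment argument gives the following: because $v$ vanishes on a segment ending at $0$, the Alexandrov maximum principle in the thin slab implies that $v$ must grow at least like $\mathrm{dist}^{2-2/n}$ along directions approaching $0$ along $\partial\Omega$. More concretely, the section $\{v<t\}$ contains $\ell$ and the ball-like portion, so it has length $\gtrsim1$ in $e_1$; Alexandrov gives $|\{v<t\}|\lesssim t^{n/2}$. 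The section therefore has cross-sectional area $\lesssim t^{n/2}$ in the $(n-1)$ transverse directions. The boundary points of $\partial\Omega$ near $0$ in the transverse directions, at distance $r$, must then satisfy $v_0\gtrsim r^{2-2/n}$ in some direction: since the section is convex and contains a segment of length $\sim1$, having $v_0\le \epsilon r^{2-2/n}$ would place $\partial\Omega$ points in the section, yielding a transverse section of size $\gtrsim (t/\epsilon)^{n/(2n-2)}$ in all $n-1$ directions. Its volume would then be $\gtrsim (t/\epsilon)^{n/2}$, contradicting Alexandrov as $\epsilon\to0$. The boundary $\partial\Omega$ near $0$ is available in the transverse directions because $0\in\partial\Omega$ and $\Omega$ is convex. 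This contradicts the hypothesis $v_0=o(|x-x_i|^{2-2/n})$.

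The main obstacle is this last step: making the volume estimate rigorous near a boundary point. The section is cut by $\partial\Omega$, and only a cone-like portion of the transverse directions is available. I would handle this by using convexity of $\Omega$ to find $n-1$ independent boundary points in a cone at $0$, then applying the Alexandrov estimate to $v-t$ on the convex hull of $\ell$ and these points, on which $\det D^2v\le1$ and $v\le t$ at the vertices.
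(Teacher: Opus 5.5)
Your strategy is the paper's: use convexity to interpolate between $v=0$ on a segment $\ell\subset\partial\{v=0\}$ ending at a boundary zero $x_i$ and the small data $v_0=o(|x-x_i|^{2-\frac{2}{n}})$. This gives $v\le t=o(r^{2-\frac{2}{n}})$ on a convex set of volume $\gtrsim r^{n-1}$, which contradicts a Monge--Amp\`ere volume bound. (The paper uses the cone $C_\delta$ spanned by the far endpoint $y$ and the half-disc $(B'_\delta\cap\{x_1<0\})\times\{\frac a2\}$. Your hull of $\ell$ and nearby points of $\partial\Omega$ is an equivalent choice, and your reduction to a segment ending at some $x_i$ is fine.)

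However, the decisive inequality is misstated, and as written it gives no contradiction. The claim $|\{v<t\}|\lesssim t^{n/2}$ is false: every sublevel set contains $\{v=0\}$, which has nonempty interior, so $|\{v<t\}|$ is bounded below independently of $t$. Your repair applies ``Alexandrov'' on the hull $K$ using $\det D^2v\le 1$. But an upper bound on the Monge--Amp\`ere measure only bounds the depth of $v$ from above in terms of $|K|$ (it says sections are large), which is the wrong direction.

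What you need is $|K|^{2/n}\lesssim \sup_K v-\inf_K v$. This follows by comparing $v-t$ with a paraboloid on the John ellipsoid of $K$, and it requires $\det D^2 v\ge 1$ in $K$, i.e.\ $K\subset\{v>0\}$. Hence the boundary points must be taken on the closed side $\{x_n\ge 0\}$ of the supporting hyperplane $\partial H$. For instance, take the points where the lines $w+se_1$, with $w\perp e_1$, $|w|\le r$, $w_n\ge 0$, first meet $\overline\Omega$; they lie within $Cr$ of $0$ because $\ell$ enters $\Omega$ transversally. Then $K^\circ\subset\Omega\cap\{x_n>0\}\subset\{v>0\}$, and $\det D^2v=1$ there.

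Taking points anywhere ``in a cone at $0$'' lets $K$ enter the interior of $\{v=0\}$, where $\det D^2v=0$ and no such bound holds. With this correction, you have $v\le\sup_{\partial\Omega\cap B_{Cr}}v_0=o(r^{2-\frac2n})$ on $K$ and $|K|\gtrsim r^{n-1}$. Your argument then closes and coincides with the paper's. The barrier discussion and the lower bound $|\text{section}|\gtrsim t^{n/2}$ are not needed.
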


\begin{proof}
We show the lemma by contradiction. Suppose that 
$$[y,z]\subset \partial \{v=0\}\subset \Omega,\quad y\ne z.$$ 
Applying Corollary \ref{Corollary 2.5}, we may also assume $[x_0,y]\subseteq\partial\{v=0\}$. By translation and rotation of the coordinates, we assume $x_0=0$, 
$y=a e_n$ for some $a>0$, and $\{v=0\}\subseteq\{x_1\geq0\}$. For $\delta>0$ small, let $C_\delta$ be the convex hull of $y$ and $(B_\delta'\bigcap\{x_1<0\})\times\{\frac{a}{2}\}$. Then we have
$\det D^2 v=1,
v\geq0$
in $C_\delta$. By convexity, there holds
\begin{equation*}
\mathop{\sup}\limits_{C_\delta}v=o(\delta^{2-\frac{2}{n}})\quad as\quad\delta\to0^+,
\end{equation*}
which contradicts with that $|C_\delta|\gtrsim\delta^{n-1}$.
\end{proof}
From Lemma \ref{Lemma 3.1} and Lemma \ref{Lemma 2.6},  one knows that $v\in C^1(\Omega)$ under the assumptions of either Theorem \ref{Theorem 1.1.} or Theorem \ref{Theorem 1.2.}. In the following, we  show the $C^1$ regularity of free boundary.
\begin{lemma}
\label{Proposition 3.2.}
Under the assumptions of either Theorem \ref{Theorem 1.1.} or Theorem \ref{Theorem 1.2.}, the free boundary $\Gamma=\partial\{v=0\}$ is $C^1$.
\end{lemma}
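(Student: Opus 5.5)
Since $v_0>0$ on $\partial\Omega\setminus\{0\}$, the closed convex set $K:=\{v=0\}$ meets $\partial\Omega$ only at $0$. Hence $\Gamma$ splits into the interior part $\Gamma\cap\Omega$ and the single point $0$, and I will treat the two separately.

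\textbf{Interior points.} Both growth assumptions, $v_0\sim|x|^2$ and $v_0\sim|x|^{2+\frac{2}{n}(\alpha-1)}$ with $\alpha\ge1$, give $v_0(x)=o(|x|^{2-\frac2n})$ on $\partial\Omega$. So Lemma \ref{Lemma 3.1} applies with the single zero $x_0=0$, and $K$ is strictly convex. In particular every $x\in\Gamma\cap\Omega$ is an extreme point of $K$. By Lemma \ref{Lemma 2.4} it is then exposed, and by Lemma \ref{Lemma 2.3} $\Gamma$ is $C^{2,\gamma}$, hence $C^1$, near $x$.

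\textbf{The contact point $0$.} Here I must show two things:
\begin{itemize}
\item[(a)] $K$ has a unique supporting hyperplane at $0$, which must be $\{x_n=0\}$ because $K\subset\{x_n\le0\}$;
\item[(b)] the unit normals of $\Gamma$ at points $x\in\Gamma\cap\Omega$ converge to $e_n$ as $x\to0$.
\end{itemize}
By convexity, (b) follows from (a): a limit of supporting hyperplanes at $x_k\to0$ is a supporting hyperplane at $0$.

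For (a), suppose instead there is a supporting plane $\{x\cdot\nu=0\}$ with $\nu\neq e_n$. Then $K\subset\{x\cdot\nu\le0\}\cap\{x_n\le0\}$, a wedge of opening strictly less than $\pi$ with edge through $0$. Near $0$ the complementary region $\{x\cdot\nu>0\}\cap\Omega$ contains a cone-like region $D_\delta$ touching $S$. On $D_\delta$ we have $v>0$ and $\det D^2v=1$. I will derive a contradiction in the spirit of the proof of Lemma \ref{Lemma 3.1}, by comparing the size of $v$ on $D_\delta$ with its measure.

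Take a small convex set $C_\delta\subset D_\delta$ of diameter about $\delta$ and volume about $\delta^n$, for instance a simplex with one vertex at a point of $K$ near $0$ and its face on $S$. Then $\sup_{C_\delta}v\lesssim \sup_{\partial C_\delta}v$, and this is controlled by $v_0$ on the part of $\partial C_\delta$ lying on $S$ and by $v=0$ at the vertex in $K$.

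\textbf{Main obstacle.} A volume comparison of this kind only contradicts growth $o(\delta^2)$, and $v_0\sim\delta^2$ is borderline, so this crude argument will not suffice. I will instead construct a barrier. Write $\nu=(\nu',\nu_n)$; after rotating in $x'$, the hyperplane is $\{x_n\nu_n+x_1\nu_1=0\}$ with $\nu_1\neq 0$. Consider a subsolution $w$ on the sector $\{x_1\nu_1+x_n\nu_n>0,\ x_n<0\}$ of the form
$$w=\epsilon\bigl(r^{2}\phi(\theta)+|x''|^2\bigr),$$
homogeneous of degree $2$ in the $(x_1,x_n)$-plane, with $\phi$ vanishing on the edge of the wedge. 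Because the sector angle is less than $\pi$, a positive homogeneous solution of degree $2$ exists and $\det D^2 w\ge1$ can be arranged after scaling. Comparison via Lemma \ref{Lemma 2.2} on a small region then gives $v\ge w$, which forces quadratic growth of $v$ in directions pointing into $K$.

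The contradiction comes from the Alexandrov estimate applied near $0$. I expect the delicate step to be choosing the sector and the exponents so that $w\le v_0$ on $S$ (where $v_0\gtrsim|x|^2$) and $w\le v$ on the rest of the boundary, while the lower bound still contradicts $v=0$ on $K$. If this fails, a fallback is to blow up, $v_\delta(x)=\delta^{-2}v(\delta x)$. Compactness then gives a global solution on the half space $\{x_n<0\}$ with boundary data $\sim|x|^2$ and zero set a cone with a corner. I would then rule out this corner using the strict convexity and the $C^{2,\gamma}$ regularity of the limit free boundary away from $0$.
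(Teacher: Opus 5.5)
Your treatment of $\Gamma\cap\Omega$ (Lemma~\ref{Lemma 3.1}, then Lemmas~\ref{Lemma 2.4} and~\ref{Lemma 2.3}) is fine and agrees with the paper, and so is your reduction at $0$ to uniqueness of the supporting hyperplane. The gap is that step (a) is never proved.

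The barrier you propose does not exist. For $w=\epsilon(r^2\phi(\theta)+|x''|^2)$ one has $\det D^2w=2^{n-2}\epsilon^n(4\phi^2+2\phi\phi''-\phi'^2)$. Writing $\phi=\psi^2$, this equals $2^{n}\epsilon^n\psi^3(\psi+\psi'')$. Suppose it is $\ge c>0$ where $\psi>0$, next to a zero $\theta_e$ of $\psi$. Then near $\theta_e$ you get $\psi''\gtrsim\psi^{-3}$ and $\psi'>0$. Integrating $(\psi'^2)'\gtrsim-(\psi^{-2})'$ from $\theta_e$ forces $\psi'^2=+\infty$ throughout that interval, which is absurd. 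So no subsolution of your form with $\det D^2w\ge\chi_{\{w>0\}}$ can vanish along a ray. The ``sector angle $<\pi$'' heuristic comes from the Laplacian and has no Monge--Amp\`ere analogue. Compare the paper's barriers: their bound $\det D^2w_\alpha\ge\lambda(\alpha-1)\chi_{\{w_\alpha>0\}}$ degenerates exactly as the free boundary $x_n=-|x'|^\alpha$ becomes conical. Moreover, even granting such a $w$, comparison on the sector $\{x\cdot\nu>0\}$ only bounds $v$ from below where $v$ is already positive. It therefore cannot contradict $v=0$ on $K$, and the promised ``Alexandrov estimate'' contradiction is never specified.

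The missing idea is that the volume argument you discarded does work once the convex set is long rather than round. The paper rescales, $\bar v(x)=\lambda^{-2}v(\lambda x)$, and assumes $h(te_1)/t\to k>0$ as $t\to0^+$. It lets $C_t$ be the convex hull of $te_1-kte_n$ and $B'_\delta(e_1)\times\{0\}$. For $\delta$ small, $C_t\subset\{\bar v>0\}$. On the disc $\bar v\lesssim1$ because $v_0\lesssim|x|^2$. The far vertex lies, up to an error vanishing as $\lambda\to0$, on the rescaled free boundary. Convexity along lines from the disc through the vertex then gives $\bar v\le C$ on $C_t$ with $C$ independent of $t$. Since $\det D^2\bar v=1$, this forces $|C_t|\lesssim1$, whereas $|C_t|\sim t\delta^{n-1}\to\infty$. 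Your round sets of size $\delta$ lose exactly this feature: the height bound must not grow with the length of the set, and that is possible only because one end sits on the free boundary.

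Your fallback could be completed the same way. The blow-up zero set is a cone with a boundary ray leaving $0$ into $\{x_n<0\}$, and the long-thin-set argument of Lemma~\ref{Lemma 3.1} along that ray rules it out. As written, however, it has two problems. First, you invoke $C^{2,\gamma}$ regularity of the limit free boundary away from $0$, which Lemma~\ref{Lemma 2.3} does not give, since points on a boundary ray of a cone are not extreme. Second, under the hypotheses of Theorem~\ref{Theorem 1.2.} the quadratic blow-up has zero boundary data, not data $\sim|x|^2$.
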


\begin{proof}
According to Lemma \ref{Lemma 3.1} and Lemma \ref{Lemma 2.3}, the free boundary $\Gamma\backslash\{0\}$ is $C^2$. So it suffices to show the free boundary $\Gamma$ is $C^1$ near $0$.
\par Suppose not, without loss of generality, we can represent the free boundary $\Gamma$ near $0$ by
\begin{equation*}
x_n=-h(x'),\quad |x'|<l_0
\end{equation*}
Moreover, there holds
\begin{equation*}
\lim_{t\rightarrow 0^+}\frac{h(te_1)}{t}=\frac{\partial^+h}{\partial e_1}(0)=k>0.
\end{equation*}
Let $0<\delta<\frac{1}{2}$ be sufficiently small and $C>0$ be sufficiently large. For any $t>1$, let $C_t$ be the convex hull of $(te_1-kte_n)$ and $B'_\delta(e_1)$(Figure \ref{Figure 2}.). 
Define
\begin{equation*}
\bar{v}(x):=\lambda^{-2}v(\lambda x),\quad \lambda>0.
\end{equation*}
Then, for $\lambda$ small enough, $0\le \bar v\le C$ satisfies
\begin{equation*}
\det D^2\bar{v}=1 \quad \text{in}\quad C_t.
\end{equation*}
 Hence
\begin{equation*}
|C_t|\lesssim1,
\end{equation*}
which contradicts 
\begin{equation*}
\lim\limits_{t\rightarrow+\infty}|C_t|=+\infty.
\end{equation*}
Thus the free boundary $\Gamma\in  C^1$.
\end{proof}
\begin{figure}[H]
\centering
\caption{}
\label{Figure 2}
\begin{tikzpicture}[scale=2]
\fill[pattern=north east lines, pattern color=gray!70] 
    (0.7,0) -- (2,-2/3) -- (1.3,0) -- cycle;
\draw[-latex] (-1,0) -- (3.5,0) node[below] {$x'$};
\draw[-latex] (0,-1.5) -- (0,0.5) node[left] {$x_n$};
\draw[domain=-1:3.35, samples=200, thick] plot (\x, {-1/40*\x*\x-1/3*abs(\x)}) node[right] {$\lambda x_n=-h(\lambda x')$};
\draw[domain=0:3.3, thick] plot (\x, {-1/3*abs(\x)}) node[right] {$x_n=-kx_1$};
\fill (2,-2/3) circle (1pt) node[right] {$te_1-kte_n$};
\fill (1,0) circle (0.5pt) node[above] {$e_1$};
\fill (0,0) circle (1pt);
\draw (2,-2/3) -- (0.7,0);
\draw (2,-2/3) -- (1.3,0);
\draw (0.7,0) -- (1.3,0);  
\draw[dashed, thick] (3.04,-1.24) -- (2,-2/3) -- (0.8,0);
\node (C0label) at (1.7,0.25) {$C_t$};
\draw[->, thick] (C0label) -- (4/3,-2/9);
\node (Delabel) at (1.2,0.4) {$\delta$};
\draw[->] (Delabel) -- (1.15,0); 
\node at (1,-1) {$\bar{v}=0$};
\end{tikzpicture}
\end{figure}
\section{Quadratic Growth Boundary value}
We consider the case $v_0(x)\sim|x|^2$, $x\in\partial\Omega$ in this section. In the rest of the present paper, locally, we represent the free boundary $\Gamma$ as:
\begin{equation*}
x_n=-h(x'),\quad |x'|<l_0.
\end{equation*}
\subsection{Sharp Asymptotic Behavior of Free Boundary near $0$}
\begin{proposition}
\label{Proposition 4.1}
Under the assumptions of Theorem \ref{Theorem 1.1.}, we have
\begin{equation}
h(x')\sim |x'|\cdot(-\log|x'|)^{-\frac{n}{2}}.
\label{4.1}
\end{equation}
\end{proposition}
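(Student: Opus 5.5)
The plan is to prove the two inequalities in \eqref{4.1} separately, each by an explicit barrier and the comparison principle Lemma \ref{Lemma 2.2}, applied in a small neighbourhood $U_\rho=\Omega\cap\{|x'|<\rho,\ -\rho<x_n<0\}$ of $0$. The upper barrier $\overline w$ will satisfy $\det D^2\overline w\le\chi_{\{\overline w>0\}}$ and $\overline w\ge v$ on $\partial U_\rho$. Then $v\le\overline w$, so $\{\overline w=0\}\subset\{v=0\}$ and $h$ is bounded below by the free boundary of $\overline w$. Symmetrically, a lower barrier $\underline w$ with $\underline w\le v$ on $\partial U_\rho$ gives $\{v=0\}\subset\{\underline w=0\}$, hence the upper bound on $h$. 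The boundary comparison on the flat part $S\cap\partial U_\rho$ uses $v_0\sim|x'|^2$. On the remaining parts of $\partial U_\rho$ I will use the $C^1$ regularity of $v$ from Lemma \ref{Proposition 3.2.} and the preceding discussion, together with the global H\"older bound of Lemma \ref{Lemma 2.1}. By choosing $\rho$ small and then adjusting the constants in the barriers, the barriers dominate or are dominated there.

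The ansatz for both barriers comes from the geometry of the thin layer $\{-h(x')<x_n<0\}$. Since $v\in C^1$ and $v=|\nabla v|=0$ on $\Gamma$, I try
\[
w(x)=|x'|^2\,\Big(\frac{x_n+g(x')}{g(x')}\Big)_+^2,\qquad g(x')=c\,|x'|\,\big(-\log|x'|\big)^{-\frac n2},
\]
possibly with lower-order corrections $|x'|^2\to\mu|x'|^2+\varepsilon x_n^2$ to adjust the boundary values. By construction $w=\mu|x'|^2$ on $\{x_n=0\}$, $w$ vanishes to second order on $x_n=-g$, and $w_{nn}\sim |x'|^2/g^2\sim(-\log|x'|)^{n}$. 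The main computation is $\det D^2w$ in the layer. Write $s=(x_n+g)/g\in(0,1)$ and expand the tangential $(n-1)\times(n-1)$ block together with the mixed terms $w_{in}$. The key point is that, because $g/|x'|$ is only logarithmically small, differentiating $g$ in $x'$ produces terms that carry powers of $(-\log|x'|)^{-1}$. After a Schur complement with respect to $w_{nn}$, the determinant reduces to $w_{nn}$ times a tangential determinant of order $(-\log|x'|)^{-n}$. The exponent $n/2$ is exactly the one that balances these two factors and makes $\det D^2 w\sim 1$. The constant $c$ then tunes the sign: taking $c$ large (resp.\ small) makes $\det D^2w\le1$ (resp.\ $\ge1$) up to the lower-order corrections.

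I expect this determinant calculation to be the main obstacle. The profile $s^2$ makes the tangential block degenerate as $s\to0$, and the mixed terms $w_{in}$ are of the same order as the diagonal contributions. So the convexity of $w$ and the sign of $\det D^2w-1$ must be checked uniformly in $s\in(0,1]$, not only to leading order. If the pure $s^2$ profile fails near $s=0$, I would first try the profile $s^2+\kappa s^2(1-s)$, or multiply by a factor $(-\log|x'|)^{\pm\epsilon}$ in the amplitude, and then let the free constants absorb the error. For the subsolution I must also check $\det D^2\underline w\ge\chi_{\{\underline w>0\}}$ across the zero set, where $\underline w\in C^1$ suffices in the Aleksandrov sense. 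For the supersolution, convexity of $\overline w$ near $x'=0$ needs care, and I would fix it by adding a small multiple of $x_n^2$. Once both barriers are in place, comparison on $U_\rho$ gives $c_1|x'|(-\log|x'|)^{-n/2}\le h(x')\le c_2|x'|(-\log|x'|)^{-n/2}$ for $|x'|$ small, which is \eqref{4.1}.
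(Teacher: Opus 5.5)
There is a genuine gap, and it sits in the step you single out: your ansatz does not have $\det D^2w\sim1$, and the claimed balance at the exponent $n/2$ is false. Put $r=|x'|$, $L=-\log r$, $s=1+x_n/g$, and note $rg'(r)/g(r)=1+\delta$ with $\delta=\frac{n}{2L}$. Since $w$ is radial in $x'$, $\det D^2w=(w_r/r)^{n-2}(w_{rr}w_{nn}-w_{rn}^2)$. A direct computation gives $w_r/r=2s\big(1+\delta(1-s)\big)$, $w_{nn}=2r^2/g^2=2c^{-2}L^{n}$, and
\[
w_{rr}w_{nn}-w_{rn}^2=\frac{2r^2}{g^2}\,s\,\big[2\delta(1+s)+O(L^{-2})\big],
\qquad\text{so}\qquad
\det D^2w\approx 2^{n-1}n\,c^{-2}\,s^{n-1}(1+s)\,L^{n-1}.
\]
The leading term cancels because $2\Phi\Phi''-(\Phi')^2\equiv0$ for the quadratic profile $\Phi=s^2$. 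Only the $O(\delta)$ contribution from the radial derivative of $g$ survives. The $n-2$ angular eigenvalues $w_r/r\approx 2s$ are of order one and carry no logarithm. So the Schur complement of $w_{nn}$ is of order $L^{-1}$, not $L^{-n}$. The exponent that would balance this ansatz is $g\sim rL^{-1/2}$, which is wrong in every dimension, including $n=2$.

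With $g=c\,rL^{-n/2}$ and any fixed $c$, $\det D^2\overline w\to\infty$ in the bulk of the layer as $x'\to0$, so $\overline w$ is not a supersolution near the contact point. On the other side, $\det D^2\underline w\sim s^{n-1}\to0$ at the free boundary of $\underline w$, so $\underline w$ is not a subsolution there. None of your proposed repairs touches either defect:
\begin{itemize}
\item $s^2+\kappa s^2(1-s)$ still vanishes quadratically. The correct local behaviour for this obstacle problem is $d^{(n+1)/n}$, which is what keeps the determinant non-degenerate.
\item Factors $(-\log|x'|)^{\pm\epsilon}$ in the amplitude do not remove the $L^{n-1}$ growth.
\item Adding $\varepsilon x_n^2$ does not remove it either.
\end{itemize}
Separately, the directions in your first paragraph are swapped. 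The relation $v\le\overline w$ gives $\{\overline w=0\}\subset\{v=0\}$, i.e.\ an \emph{upper} bound on $h$; the subsolution gives the lower bound.

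What is missing is the mechanism that actually produces $n/2$. For the lower bound, the paper does not build a single barrier with logarithms. It takes the self-similar family $w_\alpha=|x'|^{2+\frac2n(\alpha-1)}g_\alpha(x_n/|x'|^\alpha)$, $\alpha>1$, whose profile is $(t+1)^{(n+1)/n}$ in a layer of relative width $c_1(\alpha-1)$ and quadratic beyond it. The price is that one only gets $\det D^2w_\alpha\ge\lambda(\alpha-1)$, while $w_\alpha(x',0)\lesssim(\alpha-1)^{-\frac{n-1}{n}}|x'|^{2+\frac2n(\alpha-1)}$. The anisotropic rescaling $A^{-1}(\alpha-1)^{\frac{n-1}{n}}w_\alpha(x',A^n(\alpha-1)^{-\frac n2}x_n)$ repairs both defects at once and yields $h(x')\ge A^{-n}(\alpha-1)^{n/2}|x'|^\alpha$. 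One then chooses $\alpha-1=\frac{n}{2}(-\log|x'|)^{-1}$ separately at each point. For the upper bound, the paper works with the partial Legendre transform $U$. It satisfies $U_{nn}+\det D^2_{p'}U=0$ and encodes the free boundary through $\partial_{p'}U(0,p_n)$. The paper compares $U$ with the explicit sub-barrier $B=\theta p_n(-\log p_n)^{n/2}r+\theta^{2/n}r^2f(\cdot)$, where $f'(t)=(t+1)^{-3/2}$. The sharp supersolution for $v$ thus appears only indirectly, as a Legendre dual, rather than as an explicit ansatz in the $x$-variables.
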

Inspired by the construction of \cite{2}(P313-314), we introduce the following sub-barrier functions. Apart from the argument in \cite{2} for dimension two, we will construct barrier functions $w_\alpha$ for all dimensions and give the lower bound of $\det D^2w_\alpha$ in exact order of $(\alpha-1)$ (see \eqref{4.6}) here.
\begin{lemma}
\label{Lemma 4.2}
For any $\alpha\in(1,\alpha_0)$, define $t_0=-1+c_1(\alpha-1)$, $s_0=\frac{n-1}{n+1}t_0+\frac{2n}{n+1}$,
\begin{equation*}
g_\alpha(t)=\begin{cases}
0,\quad t<-1\\
(t+1)^\frac{n+1}{n},\quad -1\leq t\leq t_0\\
(\frac{n+1}{2n})^2(t_0+1)^{-\frac{n-1}{n}}(t+s_0)^2,\quad t_0\leq t\leq0
\end{cases}
\end{equation*}
and
\begin{equation*}
w_\alpha(x):=|x'|^{2+\frac{2}{n}(\alpha-1)}g_\alpha(\frac{x_n}{|x'|^\alpha}),\quad x_n\leq0.
\end{equation*}
Then for some constants $\alpha_0>1$ and $\lambda,c_1>0$, depending merely on $n$, there holds that
 $w_\alpha\in C(\overline{\{x_n\leq0\}})$ is convex and
\begin{equation}
\det D^2w_\alpha\geq\lambda(\alpha-1)\chi_{\{w_\alpha>0\}}\quad \text{in}\quad  \{x_n<0\}.
\label{4.6}
\end{equation}
\end{lemma}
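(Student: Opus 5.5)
The plan is to exploit the scaling structure of $w_\alpha$. I will write $r=|x'|$, $\varepsilon=\alpha-1$, $\beta=2+\frac{2}{n}\varepsilon$ and $t=x_nr^{-\alpha}$, and reduce both convexity and \eqref{4.6} to one-variable inequalities for $g_\alpha$ on $t\in[-1,0]$.

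\emph{Step 1: regularity and monotonicity of $g_\alpha$.} Put $p=\frac{n+1}{n}$. Using $t_0+s_0=\frac{2n}{n+1}(t_0+1)$, both value and derivative match at $t_0$: the value is $(t_0+1)^{p}$ and the derivative is $p(t_0+1)^{1/n}$. At $t=-1$ both value and derivative vanish. Hence $g_\alpha\in C^1(-\infty,0]$ is convex, nondecreasing and nonnegative, and $g''_\alpha$ is only piecewise continuous. Consequently $D^2w_\alpha$ has no singular part across $\{t=t_0\}$ or $\{t=-1\}$, so the Monge--Amp\`ere measure is absolutely continuous and can be computed pointwise on each piece. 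Continuity up to $\{x_n\le 0\}$, including $x'=0$, follows from $g_\alpha=0$ for $t<-1$: for fixed $x_n<0$ we have $t\to-\infty$ as $r\to0$, and $0\le w_\alpha\le g_\alpha(0)r^\beta$.

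\emph{Step 2: reduction to a function of $t$.} Since $w_\alpha$ is radial in $x'$, $D^2w_\alpha$ has eigenvalue $w_r/r$ with multiplicity $n-2$ (angular directions), together with the $2\times2$ Hessian in $(r,x_n)$. A direct computation gives
\begin{equation*}
w_r=r^{\beta-1}(\beta g-\alpha tg'),\qquad w_{nn}=r^{\beta-2\alpha}g'',
\end{equation*}
and
\begin{equation*}
\det D^2w_\alpha=c_n\,r^{(\beta-2)n+2-2\alpha}\,(\beta g-\alpha tg')^{n-2}\Big[\beta(\beta-1)gg''-(\beta-\alpha)^2g'^2-\alpha(\alpha-1)\,t\,g'g''\Big].
\end{equation*}
The exponent $(\beta-2)n+2-2\alpha$ is exactly $0$; this is why $\beta$ is chosen this way. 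Since $t\le0$ and $g'\ge0$, we get $\beta g-\alpha tg'\ge0$ and $w_{nn}\ge0$, so convexity reduces to nonnegativity of the bracket. Then \eqref{4.6} reduces to $F(t):=(\beta g-\alpha tg')^{n-2}[\cdots]\ge\lambda\varepsilon$ on $(-1,0]$. Convexity across the set where $g_\alpha=0$ is automatic, since $w_\alpha$ is $C^1$ there and convex on each side.

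\emph{Step 3: the region $-1<t\le t_0$ (main obstacle).} Here $g=(t+1)^p$. The $\varepsilon$-independent part is $2gg''-g'^2=p(p-2)(t+1)^{2p-2}<0$, so positivity must come from the $\varepsilon$-terms. The key term is $-\alpha\varepsilon\,t\,g'g''\approx\varepsilon p^2(p-1)(t+1)^{2p-3}$ near $t=-1$. Near $t=-1$ we also have $\beta g-\alpha tg'\approx p(t+1)^{p-1}$. The exponents combine as $(2p-3)+(p-1)(n-2)=0$, so this term contributes $\gtrsim\varepsilon$ uniformly. The negative term, after multiplying by $(\beta g-\alpha tg')^{n-2}$, is $O(t+1)\le O(c_1\varepsilon)$. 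The $O(\varepsilon)$ perturbations of $\beta(\beta-1)$ and $(\beta-\alpha)^2$ are likewise $O(\varepsilon(t+1))$. Choosing $c_1$ small (depending only on $n$), and $\alpha_0$ close to $1$, therefore gives $F\ge\lambda\varepsilon$ on this interval. This is where the specific exponent $p=\frac{n+1}{n}$ and the cutoff $t_0+1=c_1\varepsilon$ are essential.

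\emph{Step 4: the region $t_0\le t\le0$.} Here $g=A(t+s_0)^2$ with $A=(\frac{n+1}{2n})^2(t_0+1)^{-\frac{n-1}{n}}$. Then
\begin{equation*}
\beta(\beta-1)gg''-(\beta-\alpha)^2g'^2=A^2(t+s_0)^2\big[2\beta(\beta-1)-4(\beta-\alpha)^2\big].
\end{equation*}
This vanishes at $\varepsilon=0$, and expanding in $\varepsilon$ gives $(\frac{4}{n}+8)\varepsilon+O(\varepsilon^2)>0$. The remaining term $-\alpha\varepsilon\,t\,g'g''=4\alpha\varepsilon A^2|t|(t+s_0)$ is also nonnegative, so the bracket is $\ge0$ and convexity holds.

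For the quantitative bound I split the interval into two parts.
\begin{itemize}
\item[(a)] For $t\in[-\frac12,0]$, both $g\gtrsim1$ and $t+s_0\gtrsim1$ hold uniformly. The first term then gives $F\gtrsim\varepsilon$.
\item[(b)] For $t\in[t_0,-\frac12]$, I use the $t$-term. Since $t+s_0\sim(t_0+1)+(t+1)$, we have $A^2(t+s_0)\cdot(\beta g-\alpha tg')^{n-2}\gtrsim A^2(t+s_0)(g')^{n-2}$ with $g'\sim A(t+s_0)$. The total power of $(t+s_0)$ is $n-1\ge1$, so it is smallest at $t=t_0$. There it scales like $(t_0+1)^{-\frac{n(n-1)}{n}+(n-1)}=(t_0+1)^0$, so again $F\gtrsim\varepsilon$.
\end{itemize}
Combining Steps 3 and 4 gives \eqref{4.6} with $\lambda,c_1,\alpha_0$ depending only on $n$.
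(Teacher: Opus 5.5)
Your proposal is correct and follows essentially the same route as the paper. You reduce via the radial structure to $(w_r/r)^{n-2}\det D^2_{(r,x_n)}\tilde w$ and note that the power of $r$ cancels. On $(-1,t_0)$ you use the term $-\alpha(\alpha-1)tg'g''$ together with the exponent cancellation coming from $p=\frac{n+1}{n}$. On $[t_0,0]$ you check that $A^n(t+s_0)^{n-1}\gtrsim 1$ at $t=t_0$; the paper gets this by monotonicity in $t$ rather than your split at $t=-\frac12$. One slip: the $\varepsilon$-coefficient of $2\beta(\beta-1)-4(\beta-\alpha)^2$ is $8-\frac4n$, not $8+\frac4n$, but it is still positive, so nothing in your argument changes.
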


\begin{proof}
Let $(\alpha_0-1)$ and $c_1$ be sufficiently small. For $\alpha\in(1,\alpha_0)$, define
\begin{equation*}
\tilde w_\alpha(r,x_n):=r^{2+\frac{2}{n}(\alpha-1)}g_\alpha(\frac{x_n}{r^\alpha}),\quad r\geq0,\quad x_n\leq0.
\end{equation*}
Since $g_\alpha\in C^1((-\infty,0])$, we have $\tilde w_\alpha\in C^1((0,+\infty)\times(-\infty,0))\bigcap C([0,+\infty)\times(-\infty,0])$, and
\begin{equation*}
\partial_r\tilde{w}_\alpha=\Big(2+\frac{2}{n}(\alpha-1)\Big)r^{1+\frac{2}{n}(\alpha-1)}g_\alpha-\alpha r^{1+\frac{2}{n}(\alpha-1)}\frac{x_n}{r^\alpha}g_\alpha'\geq0.
\end{equation*}
A straightforward computation shows that
\begin{align}
\det D^2\tilde{w}_\alpha&=(\partial_{rr}^2\tilde{w}_\alpha)(\partial_{nn}^2\tilde{w}_\alpha)-(\partial_{rn}^2\tilde{w}_\alpha)^2\notag\\
&=r^{\frac{2}{n}(\alpha-1)}\biggl[\Big(2+\frac{2}{n}(\alpha-1)\Big)\Big(1+\frac{2}{n}(\alpha-1)\Big)g_\alpha\notag\\
&\qquad+\alpha\Big(\alpha-3-\frac{4}{n}(\alpha-1)\Big)\frac{x_n}{r^\alpha}g_\alpha'\notag\\
&\qquad+\alpha^2(\frac{x_n}{r^\alpha})^2g_\alpha''\biggr]\cdot r^{(\frac{2}{n}-2)(\alpha-1)}g_\alpha''\notag\\
&\qquad-r^{2(\frac{2}{n}-1)(\alpha-1)}\biggl[\Big(2-\alpha+\frac{2}{n}(\alpha-1)\Big)g_\alpha'-\alpha\frac{x_n}{r^\alpha}g_\alpha''\biggr]^2\notag\\
&=r^{2(\frac{2}{n}-1)(\alpha-1)}\biggl[\Big(2+\frac{2}{n}(\alpha-1)\Big)\Big(1+\frac{2}{n}(\alpha-1)\Big)g_\alpha''g_\alpha\notag\\
&\qquad-\alpha(\alpha-1)\frac{x_n}{r^\alpha}g_\alpha''g_\alpha'\notag\\
&\qquad-\Big(2-\alpha+\frac{2}{n}(\alpha-1)\Big)^2(g_\alpha')^2\biggr]\notag\\
&\geq0\notag
\end{align}
and
\begin{align}
\Delta\tilde{w}_\alpha&=\partial_{rr}^2\tilde{w}_\alpha+\partial_{nn}^2\tilde{w}_\alpha\notag\\
&=r^{\frac{2}{n}(\alpha-1)}\biggl[\Big(2+\frac{2}{n}(\alpha-1)\Big)\Big(1+\frac{2}{n}(\alpha-1)\Big)g_\alpha\notag\\
&\qquad+\alpha\Big(\alpha-3-\frac{4}{n}(\alpha-1)\Big)\frac{x_n}{r^\alpha}g_\alpha'\notag\\
&\qquad+\alpha^2(\frac{x_n}{r^\alpha})^2g_\alpha''\biggr]+r^{(\frac{2}{n}-2)(\alpha-1)}g_\alpha''\notag\\
&\geq0\notag
\end{align}
for $\frac{x_n}{r^\alpha}\neq-1,t_0$.\\
Hence $\tilde{w}_\alpha$ is convex, so is $w_\alpha$. And
\begin{align}
\det D^2w_\alpha(x)&=\Big((\frac{\partial_r\tilde{w}_\alpha}{r})^{n-2}\cdot \det D^2\tilde{w}_\alpha\Big)(|x'|,x_n)\notag\\
&=\biggl[\Big(2+\frac{2}{n}(\alpha-1)\Big)g_\alpha-\alpha\frac{x_n}{r^\alpha}g_\alpha'\biggr]^{n-2}\notag\\
&\qquad\cdot\biggl[\Big(2+\frac{2}{n}(\alpha-1)\Big)\Big(1+\frac{2}{n}(\alpha-1)\Big)g_\alpha''g_\alpha\notag\\
&\qquad-\alpha(\alpha-1)\frac{x_n}{r^\alpha}g_\alpha''g_\alpha'\notag\\
&\qquad-\Big(2-\alpha+\frac{2}{n}(\alpha-1)\Big)^2(g_\alpha')^2\biggr](|x'|,x_n)\notag
\end{align}
for $\frac{x_n}{|x'|^\alpha}\neq-1,t_0$.\\
Let $t=\frac{x_n}{|x'|^\alpha}$. Then for $t\in(-1,t_0)$, we have
\begin{align}
\det D^2w_\alpha(x)&=\biggl[\Big(2+\frac{2}{n}(\alpha-1)\Big)(t+1)-\frac{n+1}{n}\alpha t\biggr]^{n-2}\cdot\notag\\
&\qquad\biggl[\Big(2+\frac{2}{n}(\alpha-1)\Big)\cdot\Big(1+\frac{2}{n}(\alpha-1)\Big)\cdot\frac{n+1}{n^2}(t+1)\notag\\
&\qquad-\alpha(\alpha-1)\cdot\frac{(n+1)^2}{n^3}t-\Big(2-\alpha+\frac{2}{n}(\alpha-1)\Big)^2\cdot\frac{(n+1)^2}{n^2}(t+1)\biggr]\notag\\
&\geq \frac{1}{C(n)}(\alpha-1)-C(n)(t_0+1)\notag\\
&=\frac{1}{C(n)}\biggl[1-\Big(C(n)\Big)^2c_1\biggr](\alpha-1)\notag\\
&\geq\lambda(n)(\alpha-1)\notag
\end{align}
since $\alpha_0$ and $c_1$ are sufficiently small. And for $t\in(t_0,0)$, we have
\begin{align}
\det D^2w_\alpha(x)&=(\frac{n+1}{2n})^{2n}(t_0+1)^{1-n}(t+s_0)^{n-1}\cdot\notag\\
&\qquad\biggl[\Big(2+\frac{2}{n}(\alpha-1)\Big)s_0+(\frac{2}{n}-2)(\alpha-1)t\biggr]^{n-2}\cdot\notag\\
&\qquad\biggl[\Big((8-\frac{4}{n})(\alpha-1)-(4+\frac{8}{n^2}-\frac{16}{n})(\alpha-1)^2\Big)s_0\notag\\
&\qquad+\Big((4-\frac{4}{n})(\alpha-1)-(8+\frac{8}{n^2}-\frac{16}{n})(\alpha-1)^2\Big)t\biggr]\notag\\
&\geq(\frac{n+1}{2n})^{2n}(t_0+1)^{1-n}(t_0+s_0)^{n-1}\cdot\notag\\
&\qquad\biggl[\Big((8-\frac{4}{n})(\alpha-1)-(4+\frac{8}{n^2}-\frac{16}{n})(\alpha-1)^2\Big)s_0\notag\\
&\qquad+\Big((4-\frac{4}{n})(\alpha-1)-(8+\frac{8}{n^2}-\frac{16}{n})(\alpha-1)^2\Big)t_0\biggr]\notag\\
&\geq\lambda(n)(\alpha-1)\notag
\end{align}
since $\alpha_0$ is sufficiently small. Thus
\begin{equation*}
\det D^2w_\alpha\geq\lambda(n)(\alpha-1)\chi_{\{w_\alpha>0\}}\quad\text{in}\quad\{x_n<0\}.
\end{equation*}
\end{proof}
With this series of sub-barrier functions, we can give a lower bound of $h$.
\begin{corollary}
\label{Corollary 4.3}
Under the assumptions of Theorem \ref{Theorem 1.1.}, we have
\begin{equation}
h(x')\gtrsim |x'|\cdot(-\log|x'|)^{-\frac{n}{2}}.
\label{4.14}
\end{equation}
\end{corollary}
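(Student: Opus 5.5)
The lower bound comes from comparing $v$ from below with rescaled copies of the sub-barriers $w_\alpha$ of Lemma \ref{Lemma 4.2}, and then choosing $\alpha$ depending on $|x'|$. The zero set of $w_\alpha$ is $\{x_n\le -|x'|^\alpha\}$. If $w_\alpha$, suitably normalized, lies below $v$, then $\{v=0\}\subseteq\{w_\alpha=0\}$, which gives $h(x')\ge |x'|^\alpha$ up to scaling. Optimizing $\alpha=1+\epsilon$ against the constant lost in the normalization should produce the logarithmic factor.

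\textbf{Normalization.} For $a,\mu>0$ I would set
$$W(x)=\mu\, w_\alpha(x/a),$$
whose Monge--Amp\`ere measure is $\mu^n a^{-2n}\det D^2 w_\alpha \ge \mu^n a^{-2n}\lambda(\alpha-1)\chi_{\{W>0\}}$. To get $\det D^2W\ge\chi_{\{W>0\}}$ I require $\mu^n\lambda(\alpha-1)\ge a^{2n}$. Taking $a$ of order $1$ (comparable to $\mathrm{diam}\,\Omega$), this amounts to $\mu\sim(\alpha-1)^{-1/n}$.

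\textbf{Boundary check.} I need $W\le v_0$ on $\partial\Omega$. On the flat part $S$ we have $x_n=0$, so $g_\alpha(0)=(\frac{n+1}{2n})^2(t_0+1)^{-\frac{n-1}{n}}s_0^2\sim (\alpha-1)^{-\frac{n-1}{n}}$, since $t_0+1=c_1(\alpha-1)$. Hence on $S$,
$$W\sim (\alpha-1)^{-1/n}(\alpha-1)^{-\frac{n-1}{n}}\,|x'/a|^{2+\frac2n(\alpha-1)}=(\alpha-1)^{-1}|x'/a|^{2+\frac2n(\alpha-1)}.$$
Since $v_0\sim|x|^2$, this lies below $v_0$ only after shrinking. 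So I would apply the comparison in a small domain $\Omega_r=\Omega\cap B_r$ with the scaling
$$W_r(x)=\mu\, r^2 w_\alpha(x/r),\qquad \det D^2W_r=\mu^n\det D^2 w_\alpha(x/r)\ge\chi_{\{W_r>0\}}\ \text{ if } \mu^n\lambda(\alpha-1)\ge1.$$
On $S\cap B_r$, $W_r\lesssim(\alpha-1)^{-1}|x'|^2(|x'|/r)^{\frac2n(\alpha-1)}$. Thus $W_r\le v_0\sim|x'|^2$ holds only where $(|x'|/r)^{\frac2n(\alpha-1)}\lesssim(\alpha-1)$, which fails near $0$. This is the main obstacle: the pure power barrier is too large near the contact point.

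\textbf{Fixing the obstacle.} I would instead fix the scale and use the positive gap of $v$ away from the contact point. On $\partial B_r\cap\Omega$ the function $v$ is bounded below (it is positive there, at least outside the zero set, which is thin by Proposition \ref{Proposition 3.2.}). The plan is to exploit the region $|t|\le1$: since $w_\alpha$ vanishes for $x_n\le-|x'|^\alpha$, on the curved part of $\partial B_r$ (where $x_n\sim -r$) we have $W_r=0$ for $\alpha$ near $1$ as long as $r^{\alpha-1}\ll1$. So the comparison only has to be checked on $S$ and on the sliver of $\partial B_r$ near $x_n=0$. There, $v\gtrsim|x|^2$ from $v_0$ and convexity, and in the sliver I would use continuity of $v$ together with $v\ge c r^2$.

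I would then pick $\alpha-1=\epsilon$ with $r^{\epsilon}$ of order $\epsilon^{n/2}$. Combining $r^{\epsilon}\sim\epsilon^{n/2}$ with the gap of size $\epsilon$ on $S$ gives $\epsilon\sim n\log\log(1/r)/(2\log(1/r))$, roughly. Comparison (Lemma \ref{Lemma 2.2}) then gives $h(x')\ge r\,(|x'|/r)^{1+\epsilon}$ at $|x'|\sim r$, which is $\gtrsim r\,r^{\epsilon}\sim r\,\epsilon^{n/2}\sim r(-\log r)^{-n/2}$, matching \eqref{4.14}.

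The delicate step is making the constants in this optimization consistent: the factor $(\alpha-1)^{-1/n}$ in $\mu$ and the factor $(t_0+1)^{-\frac{n-1}{n}}$ in $g_\alpha(0)$ must balance against $v_0\sim|x|^2$ on $S$. I expect this to be resolved by applying the comparison in the annular-type region $\{|x'|\ge\rho\}$ with a dyadic choice of $\rho\ll r$ and $\alpha$ tied to $\log(r/\rho)$, so that $(\rho/r)^{\frac2n(\alpha-1)}\lesssim\alpha-1$ holds on all of $S\cap\{|x'|\ge\rho\}$.
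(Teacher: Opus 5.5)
There is a genuine gap. You correctly see that the isotropic normalization $W_r=\mu r^2w_\alpha(x/r)$ with $\mu\sim(\alpha-1)^{-1/n}$ makes the barrier too large on $S$ by a factor $(\alpha-1)^{-1}$. The proposed repair cannot work, however.

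First, the direction is reversed. On $S$ one has $W_r(x',0)/|x'|^2\sim(\alpha-1)^{-1}(|x'|/r)^{\frac2n(\alpha-1)}$, which is increasing in $|x'|$. So the comparison on $S$ fails for $|x'|\sim r$ and holds only for $|x'|\lesssim r(\alpha-1)^{\frac{n}{2(\alpha-1)}}$. Passing to $\{|x'|\ge\rho\}$ throws away the good region and keeps the bad one.

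Second, any comparison on a subdomain such as $\Omega\cap B_r$ or an annulus needs $v\ge W_r$ on interior boundary pieces. There you have no a priori lower bound on $v$: it vanishes on $\{v=0\}\cap\partial B_r$. An estimate like $v\gtrsim|x'|^2$ near $S$ is Corollary \ref{Corollary 4.5}, which is an output of the barrier comparison, not an input.

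The final bookkeeping also signals the problem. At $|x'|\sim r$ you get $r(|x'|/r)^{1+\epsilon}\sim r$. Moreover, $\epsilon\sim\log\log(1/r)/\log(1/r)$ would give a lower bound exceeding the upper bound of Lemma \ref{Proposition 4.4.} by a factor $(\log\log(1/r))^{n/2}$.

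The missing idea is an anisotropic stretch in $x_n$. Because $S\subseteq\{x_n=0\}$, replacing $x_n$ by $Kx_n$ leaves the barrier unchanged on $S$ while multiplying its Monge--Amp\`ere measure by $K^2$. The paper sets $\bar w_\alpha(x)=A^{-1}(\alpha-1)^{\frac{n-1}{n}}w_\alpha(x',A^n(\alpha-1)^{-\frac n2}x_n)$ on all of $\Omega$, and checks three things:
\begin{enumerate}
\item The prefactor cancels $g_\alpha(0)\sim(\alpha-1)^{-\frac{n-1}{n}}$, so $\bar w_\alpha(x',0)\lesssim A^{-1}|x'|^2$.
\item Since $\partial_nw_\alpha\ge0$, also $\bar w_\alpha(x)\le\bar w_\alpha(x',0)\lesssim A^{-1}|x'|^2\le v_0$ on all of $\partial\Omega$ for $A$ large.
\item On $\{\bar w_\alpha>0\}$, $\det D^2\bar w_\alpha\ge A^{-n}(\alpha-1)^{n-1}\cdot A^{2n}(\alpha-1)^{-n}\cdot\lambda(\alpha-1)=\lambda A^n\ge1$.
\end{enumerate}
Lemma \ref{Lemma 2.2} then gives $\bar w_\alpha\le v$ in $\Omega$. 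Hence $h(x')\ge A^{-n}(\alpha-1)^{\frac n2}|x'|^\alpha$ for every $\alpha\in(1,\alpha_0)$, with $A$ independent of $\alpha$. For small $|x'|$, choose $\alpha-1=\frac{n}{2(-\log|x'|)}$, so that $|x'|^{\alpha-1}=e^{-n/2}$; this yields \eqref{4.14}. No localization, choice of scale $r$, or information about $v$ beyond $v_0$ is needed.
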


\begin{proof}
For any $\alpha\in(1,\alpha_0)$, let $w_\alpha$ be as in Lemma \ref{Lemma 4.2}.
 Then we have
\begin{equation*}
\det D^2w_\alpha\geq\lambda(\alpha-1)\chi_{\{w_\alpha>0\}},
\end{equation*}
\begin{equation*}
\partial_nw_\alpha\geq0,
\end{equation*}
and
\begin{equation*}
w_\alpha(x',0)\lesssim(\alpha-1)^{-\frac{n-1}{n}}|x'|^{2+\frac{2}{n}(\alpha-1)}.
\end{equation*}
Let
\begin{equation*}
\bar{w}_\alpha(x):=A^{-1}(\alpha-1)^\frac{n-1}{n}w_\alpha(x',A^n(\alpha-1)^{-\frac{n}{2}}x_n),\qquad x\in\bar{\Omega},
\end{equation*}
where $A$ is a sufficiently large constant.\\
Then we have
\begin{equation*}
\bar{w}_\alpha\leq v\quad\text{on}\quad\partial\Omega
\end{equation*}
\begin{equation*}
\det D^2\bar{w}_\alpha\geq\chi_{\{w_\alpha>0\}}\quad\text{in}\quad\Omega.
\end{equation*}
Comparison principle implies that
\begin{equation}
\bar{w}_\alpha\leq v\quad\text{in}\quad\Omega.
\label{4.20}
\end{equation}
Thus
\begin{equation}
h(x')\geq A^{-n}(\alpha-1)^\frac{n}{2}|x'|^\alpha\quad\text{for}\quad 0<|x'|<l_0.
\label{4.21}
\end{equation}
So estimate \eqref{4.14} holds for sufficiently small $|x'|$ by letting $\alpha=1-\frac{n}{2\log|x'|}$ in \eqref{4.21}.\\
Hence \eqref{4.14} holds for $0<|x'|<l_0.$
\end{proof}
Next we are going to give the upper bound of $h$.  We will use partial Legendre transformation to simplify the construction of barriers. 
\begin{lemma}
\label{Proposition 4.4.}
Under the assumptions of Theorem \ref{Theorem 1.1.}, we have
\begin{equation}
h(x')\lesssim|x'|\cdot(-\log|x'|)^{-\frac{n}{2}}.
\label{4.22}
\end{equation}
\end{lemma}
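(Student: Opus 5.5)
The plan is to prove \eqref{4.22} by comparison with an \emph{upper} barrier. Lemma \ref{Lemma 2.2} gives $v\le\overline{w}$ for any convex $\overline{w}\ge0$ with $\det D^2\overline{w}\le\chi_{\{\overline{w}>0\}}$ and $\overline{w}\ge v_0$ on $\partial\Omega$. Then $\{\overline{w}=0\}\subseteq\{v=0\}$, so it suffices to build such a $\overline{w}$ whose zero set contains the region $\{-C|x'|(-\log|x'|)^{-n/2}\le x_n\le0\}$ near $0$, or at least its sign-reversed analogue matching the orientation of Corollary \ref{Corollary 4.3}.

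A direct self-similar supersolution of the form $|x'|^{2+\frac2n(\alpha-1)}G(x_n/|x'|^\alpha)$ is awkward. The upper barrier must satisfy $\det\le1$ on its positivity set while vanishing on a cusp-like region, and in the $x$-variables the degenerate angular factor $(\partial_r\tilde w/r)^{n-2}$ fights against this. Following the hint, I would instead pass to the partial Legendre transform in $x'$,
\begin{equation*}
v^\star(p',x_n)=\sup_{x'}\bigl(x'\cdot p'-v(x',x_n)\bigr).
\end{equation*}
In the positivity set this transforms $\det D^2v=1$ into the quasi-linear equation
\begin{equation*}
\det D^2_{p'}v^\star+\partial_{nn}v^\star=0,
\end{equation*}
which is concave in $D^2v^\star$ and linear in the $x_n$-direction. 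The quadratic growth $v_0\sim|x'|^2$ on $S$ becomes $v^\star(p',0)\sim|p'|^2$. The zero set $\{v=0\}$ corresponds to the region where $v^\star(\cdot,x_n)$ is the support function of the slice $\{x':v(x',x_n)=0\}$, which is $1$-homogeneous for small $|p'|$. Consequently the width of the slice at height $x_n$, which is comparable to $h^{-1}$, is read off from the linear part of $v^\star$ near $p'=0$.

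In the $(p',x_n)$ variables I would construct a radial comparison function $W(|p'|,x_n)$ of the form
\begin{equation*}
W=|p'|\,\phi\bigl(x_n,\log|p'|\bigr)+(\text{quadratic correction}).
\end{equation*}
The goal is that $W$ solves the reversed differential inequality, has the correct data on $x_n=0$ and on the rest of the boundary, and has linear coefficient $\phi$ obeying an ODE in $x_n$. Heuristically, matching $|p'|^{n-1}\phi^{n-1}\cdot|p'|^{-1}$ against $\partial_{nn}$ forces $\phi(x_n)\sim|x_n|\,(-\log|x_n|)^{n/2}$. Inverting this relation gives exactly $h(x')\lesssim|x'|(-\log|x'|)^{-n/2}$. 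Comparison for the transformed equation (equivalently Lemma \ref{Lemma 2.2} after transforming back) then yields \eqref{4.22} for $|x'|<l_0$. Any remaining bounded region is handled by compactness, using the strict convexity from Lemma \ref{Lemma 3.1} and the $C^1$ regularity from Lemma \ref{Proposition 3.2.}.

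The main obstacle is the barrier construction itself. The logarithm arises only from the borderline balance between the quadratic boundary data and the volume constraint, so the barrier must be sharp to within constants rather than powers. It must also glue across the transition between the homogeneous (zero set) regime and the quadratic regime while keeping the correct inequality in the sense of the transformed equation.

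My first attempt would mimic Lemma \ref{Lemma 4.2}: use a one-parameter family indexed by $\alpha\downarrow1$ with exact $(\alpha-1)$-dependent constants, so that the estimate reads $h(x')\lesssim(\alpha-1)^{n/2}|x'|^{\alpha}$. Choosing $\alpha-1\sim(-\log|x'|)^{-1}$ then gives \eqref{4.22}, just as in the proof of Corollary \ref{Corollary 4.3}.
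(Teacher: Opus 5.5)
Your reduction is the one the paper uses. With $U(p',p_n)=v^\star(p',-p_n)$ one has $U_{nn}+\det D^2_{p'}U=0$ in $L=(B'_{r_0}\setminus\{0\})\times(0,t_0)$, together with $U(p',0)\gtrsim|p'|^2$, $U\gtrsim1$ on $|p'|=r_0$, $U(p',t_0)\gtrsim|p'|$ and $U(0,\cdot)=0$. By \eqref{4.25}, the bound \eqref{4.22} follows once you have a radial subsolution $B\le U$ on $\partial L$ with $B\ge c\,p_n(-\log p_n)^{\frac n2}r$, and your guess for that linear coefficient is right. Two slips to fix:
\begin{itemize}
\item The zero set of $\overline w$ must contain $\{x_n\le-C|x'|(-\log|x'|)^{-\frac n2}\}$. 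It cannot contain the strip above that set, which contains points of $S$ where $v_0>0$.
\item $v^\star(\cdot,x_n)$ is not the support function of the slice near $p'=0$. Only its subdifferential at $p'=0$ equals the slice, and that is all that is used.
\end{itemize}

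The real gap is that the barrier is the entire content of the lemma, and you do not construct it. The one concrete route you propose, mimicking Lemma~\ref{Lemma 4.2}, cannot work. Suppose you had estimates $h(x')\lesssim(\alpha-1)^{\frac n2}|x'|^\alpha$ with constants uniform in $\alpha\in(1,\alpha_0)$. Letting $\alpha\downarrow1$ at fixed $x'$ would give $h(x')=0$, contradicting Corollary~\ref{Corollary 4.3}. Optimizing over a family works for the lower bound because there one takes $\sup_\alpha(\alpha-1)^{\frac n2}|x'|^{\alpha-1}\sim(-\log|x'|)^{-\frac n2}$. For an upper bound one must take the infimum, which is $0$. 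There is also a structural obstruction. A self-similar supersolution with exponent $\alpha>1$ has trace $\sim|x'|^{2+\frac2n(\alpha-1)}=o(|x'|^2)$ on $\{x_n=0\}$, so it cannot lie above $v_0\sim|x'|^2$ near $0$. Lemma~\ref{Lemma 4.2} therefore has no supersolution counterpart, which is exactly why the paper moves to Legendre variables and uses a single non-self-similar barrier with the logarithm built in.

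That barrier is the missing idea. The paper takes
\[
B=\theta p_n(-\log p_n)^{\frac n2}r+\theta^{\frac2n}r^2f(s),\qquad s=\frac{r}{\theta^{1-\frac2n}p_n(-\log p_n)^{\frac n2-1}}.
\]
The term $\partial_{nn}$ of the linear part is negative, approximately $-\frac n2\theta\,r\,p_n^{-1}(-\log p_n)^{\frac n2-1}$. It is absorbed by $(B_r/r)^{n-2}B_{rr}$ for $s\lesssim1$, and by $\partial_{nn}$ of the quadratic part for $s\gg1$. The chosen powers of $\theta$ make the operator homogeneous in $\theta$. The inner scale $p_n(-\log p_n)^{\frac n2-1}$, one logarithm below the slice width, puts all three contributions at the same top order $(-\log p_n)^{n-2}$. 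The lower-order logarithmic terms are then absorbed by taking $t_0$ small.

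What remains are ODE inequalities for a bounded increasing profile $f$, for example $f'=(1+t)^{-\frac32}$ with $f(0)\gg1$. Boundedness of $f$ is what gives $B\le U$ on $\{p_n=0\}$, $\{p_n=t_0\}$ and $\{r=r_0\}$ for $\theta$ small. Without this inner scale and profile, your proposal does not yet prove the lemma.
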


\begin{proof}
Introduce partial Legendre transformation 
\begin{equation*}
V^*(p',p_n):=\sup\limits_{x'}(p'\cdot x'-v(x',p_n))
\end{equation*}
and let
\begin{equation*}
U(p',p_n):=V^*(p',-p_n).
\end{equation*}
  Let $r_0$ be a small constant, $t_0$ be a sufficiently small constant depending on $r_0$, and $L=(B_{r_0}'(0)\backslash\{0\})\times(0,t_0)$. Then we have $U\in C(\bar{L})$, $U(\cdot,p_n)$ is convex,
\begin{equation}
\partial_{p'}U(0,p_n)=\{x'|v(x',-p_n)=0\},\quad\forall p_n
\label{4.25}
\end{equation}
and $U$ satisfies the equation
\begin{equation*}
U_{nn}+\det D_{p'}^2U=0\quad\text{in}\quad L
\end{equation*}
with  boundary conditions
\begin{equation*}
U(p',0)\gtrsim|p'|^2,\quad U\gtrsim1\quad\text{on}\quad(\partial B_{r_0}')\times[0,t_0],\quad U(0,\cdot)=0,\quad U(p',t_0)\gtrsim|p'|.
\end{equation*}
According to \eqref{4.25}, \eqref{4.22} is equivalent to
\begin{equation*}
B_{Cp_n(-\log p_n)^\frac{n}{2}}'\subseteq\partial_{p'}U(0,p_n), \quad \forall p_n\in(0,t_0),
\end{equation*}
and it suffices to construct a sub-barrier function $B(r,p_n)\in C\Big([0,r_0]\times[0,t_0]\Big)\bigcap$\\
$C^\infty\Big((0,r_0)\times(0,t_0)\Big)$ such that
\begin{equation}
B_r,B_{rr}>0,B_{nn}+(\frac{B_r}{r})^{n-2}B_{rr}\geq0\quad\text{in}\quad(0,r_0)\times(0,t_0),
\label{4.31}
\end{equation}
 and
\begin{equation}
B(0,\cdot)=0,
\label{4.32}
\end{equation}
\begin{equation}
B(|p'|,p_n)\leq U(p',p_n),\quad(p',p_n)\in\partial L,
\label{4.33}
\end{equation}
\begin{equation}
B(r,p_n)\geq Cp_n(-\log p_n)^\frac{n}{2}r,\quad(r,p_n)\in(0,r_0)\times(0,t_0).
\label{4.34}
\end{equation} 
Let
\begin{equation*}
B(r,p_n)=\theta p_n(-\log p_n)^\frac{n}{2}r+\theta^\lambda r^2f(\frac{r}{p_n(-\log p_n)^\mu\theta^\eta}).
\end{equation*}
The coefficients $\lambda,\mu,\eta,\theta$ and function $f$  will be determined  later. 
\par 
Let
\begin{equation*}
s:=\frac{r}{p_n(-\log p_n)^\mu\theta^\eta}.
\end{equation*}
A straightforward computation shows that
\begin{equation*}
\frac{B_r}{r}=\theta^{1-\eta}\cdot(-\log p_n)^{\frac{n}{2}-\mu}\cdot\frac{1}{s}+2\theta^\lambda f+\theta^\lambda sf',
\end{equation*}
\begin{equation*}
B_{rr}=2\theta^\lambda f+4\theta^\lambda sf'+\theta^\lambda s^2f'',
\end{equation*}
\begin{align}
B_{nn}&=-\frac{n}{2}\theta^{1+\eta}\cdot(-\log p_n)^{\frac{n}{2}-1+\mu}s+\frac{n}{2}(\frac{n}{2}-1)\theta^{1+\eta}\cdot(-\log p_n)^{\frac{n}{2}-2+\mu}s\notag\\
&\qquad+\theta^{\lambda+2\eta}\cdot(-\log p_n)^{2\mu}\cdot(2s^3f'+s^4f'')-\theta^{\lambda+2\eta}\mu\cdot(-\log p_n)^{2\mu-1}(3s^3f'+2s^4f'')\notag\\
&\qquad+\theta^{\lambda+2\eta}\mu\cdot(-\log p_n)^{2\mu-2}((\mu+1)s^3f'+\mu s^4f''),\notag
\end{align}
and hence
\begin{align}
B_{nn}+(\frac{B_r}{r})^{n-2}B_{rr}&=-\frac{n}{2}\theta^{1+\eta}\cdot(-\log p_n)^{\frac{n}{2}-1+\mu}s+\theta^{\lambda+2\eta}\cdot(-\log p_n)^{2\mu}\cdot(2s^3f'+s^4f'')\notag\\
&\qquad+(\theta^{1-\eta}\cdot(-\log p_n)^{\frac{n}{2}-\mu}\cdot\frac{1}{s}+2\theta^\lambda f+\theta^\lambda sf')^{n-2}\cdot\notag\\
&\qquad\theta^\lambda(2f+4sf'+s^2f'')\notag\\
&\qquad+\frac{n}{2}(\frac{n}{2}-1)\theta^{1+\eta}\cdot(-\log p_n)^{\frac{n}{2}-2+\mu}s\notag\\
&\qquad-\theta^{\lambda+2\eta}\mu\cdot(-\log p_n)^{2\mu-1}(3s^3f'+2s^4f'')\notag\\
&\qquad+\theta^{\lambda+2\eta}\mu\cdot(-\log p_n)^{2\mu-2}((\mu+1)s^3f'+\mu s^4f'').
\label{4.41}
\end{align}
To make the right-hand side of \eqref{4.41} homogeneous in $\theta$, we select $\lambda,\eta$ as
\begin{equation*}
\lambda=\frac{2}{n},\qquad\eta=1-\frac{2}{n}.
\end{equation*}
Consider the right-hand side of \eqref{4.41} as a polynomial in $(-\log p_n)$.  Then the first three terms are major terms.  
We can choose $\mu:=\frac{n}{2}-1$ such that the highest order terms in $(-\log p_n)$ are the same. 
Comparing the coefficients and choosing $t_0$ small enough, it suffices to construct a positive bounded monotone increasing smooth function $f$ such that
\begin{equation*}
2f+4sf'+s^2f''>0,
\end{equation*}
\begin{equation*}
2s^3f'+s^4f''-\frac{n}{2}s+\frac{2f+4sf'+s^2f''}{s^{n-2}}\gtrsim s+s^3f'+s^4|f''|.
\end{equation*}
It can be directly checked that
\begin{equation}
f'(t):=(t+1)^{-\frac{3}{2}}, \quad f(0)>>1
\label{4.47}
\end{equation}
satisfies the above inequalities. This implies  that \eqref{4.31} holds.  \eqref{4.32}-\eqref{4.34} just follows from the definition of $B$ and by choosing a small enough constant $\theta$. This ends the proof of the present lemma.
\end{proof}
\begin{remark}
The idea of using partial Legendre transformation comes from dimension $2$. When $n=2$, $B_{nn}+(\frac{B_r}{r})^{n-2}B_{rr}=\Delta B$, and by complex analysis,
\begin{equation*}
B(r,p_n)=\theta Re(\frac{\pi}{2}z^2+iz^2\log z)
\end{equation*}
is a suitable sub-barrier function where $z=r+ip_n$.
\end{remark}
Proposition \ref{Proposition 4.1} is a direct consequence of Corollary \ref{Corollary 4.3} and Lemma \ref{Proposition 4.4.}. We conclude this section with another corollary.
\begin{corollary}
\label{Corollary 4.5}
Under the assumptions of Theorem \ref{Theorem 1.1.}, there exists a positive constant $\mu_1>0$ such that
\begin{equation*}
v(x)\geq\mu_1|x'|^2
\end{equation*}
whenever $0<|x'|<l_0$ and $x_n>-\mu_1h(x')$.
\end{corollary}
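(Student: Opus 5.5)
We will prove Corollary \ref{Corollary 4.5} by comparing $v$ from below with a rescaled copy of the sub-barrier $w_\alpha$ of Lemma \ref{Lemma 4.2}, exactly as in the proof of Corollary \ref{Corollary 4.3}. The parameter $\alpha$ will be tuned to the scale $|x'|$. Fix $x_0=(x_0',x_{0,n})$ with $0<|x_0'|<l_0$ small and $x_{0,n}>-\mu_1h(x_0')$, and set $\alpha=1-\frac{n}{2\log|x_0'|}$, so that $|x_0'|^{\alpha-1}=e^{-n/2}$ and $(\alpha-1)\sim(-\log|x_0'|)^{-1}$. From the proof of Corollary \ref{Corollary 4.3} we have $\bar w_\alpha\le v$ in $\Omega$, where
\[
\bar w_\alpha(x)=A^{-1}(\alpha-1)^{\frac{n-1}{n}}\,|x'|^{2+\frac2n(\alpha-1)}\,g_\alpha\Big(\frac{A^n(\alpha-1)^{-\frac n2}x_n}{|x'|^\alpha}\Big).
\]
At $x_0$ the prefactor $|x_0'|^{\frac2n(\alpha-1)}=e^{-1}$ is harmless. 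It therefore remains to show that the argument $t=A^n(\alpha-1)^{-n/2}x_{0,n}|x_0'|^{-\alpha}$ lies in the quadratic part $[t_0,0]$ of $g_\alpha$, and that there $(\alpha-1)^{\frac{n-1}{n}}g_\alpha(t)\gtrsim 1$.

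For the second point, note that on $[t_0,0]$ we have
\[
g_\alpha(t)\ge\Big(\tfrac{n+1}{2n}\Big)^2(t_0+1)^{-\frac{n-1}{n}}(t_0+s_0)^2 .
\]
Here $t_0+1=c_1(\alpha-1)$ and $t_0+s_0=\frac{2n}{n+1}(t_0+1)$, so this lower bound is $\sim(\alpha-1)^{2-\frac{n-1}{n}}$. That is too small by a factor $(\alpha-1)$, so the minimal value of $g$ is not enough and we must look more carefully. The quantity that matters is $(\alpha-1)^{\frac{n-1}{n}}g_\alpha(t)\sim (\alpha-1)^{\frac{n-1}{n}}(\alpha-1)^{-\frac{n-1}{n}}(t+s_0)^2=(t+s_0)^2$. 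Since $s_0\to1$ as $\alpha\to1$, this is bounded below by a constant once $t\ge -\frac12$, say. So the real requirement is $t\ge-\frac12$. This means $|x_{0,n}|\le \frac12A^{-n}(\alpha-1)^{n/2}|x_0'|^\alpha$, which in turn is $\gtrsim |x_0'|(-\log|x_0'|)^{-n/2}$. By Lemma \ref{Proposition 4.4.}, $h(x_0')\lesssim|x_0'|(-\log|x_0'|)^{-n/2}$, so choosing $\mu_1$ small (depending on $A,n$ and the constant in \eqref{4.22}) guarantees $|x_{0,n}|<\mu_1h(x_0')\le\frac12A^{-n}(\alpha-1)^{n/2}|x_0'|^\alpha$. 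Then $t\in[-\frac12,0]\subset[t_0,0]$, and we obtain $v(x_0)\ge\bar w_\alpha(x_0)\gtrsim|x_0'|^2$. Finally we shrink $\mu_1$ further so that it also serves as the constant in front of $|x'|^2$. Points with $x_{0,n}\ge0$ do not occur since $\Omega\subset\{x_n<0\}$, and points far from $0$ are handled by shrinking $l_0$.

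The main obstacle is the bookkeeping just described. The naive bound from the minimum of $g_\alpha$ on $[t_0,0]$ loses a factor $(\alpha-1)$, so we must exploit that $g_\alpha(t)\sim(\alpha-1)^{-\frac{n-1}{n}}(t+s_0)^2$ is large once $t$ stays a fixed distance from $-1$. This has to be combined with the upper bound of Lemma \ref{Proposition 4.4.} to guarantee that the region $x_n>-\mu_1h(x')$ indeed maps into $t\ge-\frac12$. We must also check that the admissible choice $\alpha=1-\frac{n}{2\log|x_0'|}$ lies in $(1,\alpha_0)$ for $|x_0'|$ small, and that the constant $A$ from Corollary \ref{Corollary 4.3} is independent of $\alpha$. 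Both follow from that proof, since the boundary comparison there uses only $v_0\gtrsim|x|^2$ together with the bound $w_\alpha(x',0)\lesssim(\alpha-1)^{-\frac{n-1}{n}}|x'|^{2+\frac2n(\alpha-1)}$.
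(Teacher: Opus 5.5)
Your proof is correct and follows the paper's argument: you compare $v$ with $\bar w_\alpha$ via \eqref{4.20}, take $\alpha-1\sim(-\log|x'|)^{-1}$, and combine the upper bound \eqref{4.22} with $(\alpha-1)^{\frac{n-1}{n}}g_\alpha(t)\sim(t+s_0)^2$ on $[t_0,0]$ (the paper chooses $\alpha=1-\frac{1}{\log|x'|}$, which puts the argument of $g_\alpha$ at $\ge -C\mu_1$ and yields the factor $(1-C\mu_1)^2$). One small slip, only in your side remark: the minimum of $g_\alpha$ on $[t_0,0]$ falls short by a factor $(\alpha-1)^2$, not $(\alpha-1)$, and this does not affect the proof.
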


\begin{proof}
Let $\mu_1$ be sufficiently small. Recalling \eqref{4.20}, for $0<|x'|<l_0$ and $x_n>-\mu_1h(x')$, we have
\begin{align}
v(x)&\geq\bar{w}_\alpha(x)\notag\\
&=A^{-1}(\alpha-1)^\frac{n-1}{n}w_\alpha(x',A^n(\alpha-1)^{-\frac{n}{2}}x_n)\notag\\
&=A^{-1}(\alpha-1)^\frac{n-1}{n}|x'|^{2+\frac{2}{n}(\alpha-1)}g_\alpha(\frac{A^n(\alpha-1)^{-\frac{n}{2}}x_n}{|x'|^\alpha})\notag\\
&\geq A^{-1}(\alpha-1)^\frac{n-1}{n}|x'|^{2+\frac{2}{n}(\alpha-1)}g_\alpha(-\frac{A^n(\alpha-1)^{-\frac{n}{2}}\mu_1C|x'|(-\log|x'|)^{-\frac{n}{2}}}{|x'|^\alpha})
\label{4.49}
\end{align}
for any $\alpha\in(1,\alpha_0)$.\\
Hence for such $x$ with sufficiently small $|x'|$, letting $\alpha=1-\frac{1}{\log|x'|}$ in \eqref{4.49}, we have
\begin{align}
v(x)&\geq\frac{1}{C}|x'|^2(1-C\mu_1)^2\notag\\
&\geq\mu_1|x'|^2\notag
\end{align}
since $\mu_1$ is sufficiently small.
\end{proof}
\subsection{Two Pogorelov-type Estimates}
In order to obtain $\Gamma\in C^{1,\text{Dini}}$ in $n\ge 3$, we need second order derivative estimates of $v$ near $\Gamma$. The following Pogorelov's type estimate will be useful later.
\begin{lemma}
\label{Proposition 5.1.}
Suppose $n\geq2$, $\Omega\subset\mathbb{R}^n$ is  a bounded convex domain and $\varphi\in C^2(\bar{\Omega})$ is convex.
Let $v\in C^\infty(\bar{\Omega})$ solve 
\begin{equation*}
\begin{split}
\det D^2v&=1,\quad\text{in}\quad\Omega\bigcap\{v>0\},\\
v&\geq\varphi\quad\text{on}\quad\partial\Omega.
\end{split}
\end{equation*}
Then for $\gamma>0$, there holds
\begin{equation}
\begin{split}
&(\varphi-v)_+\cdot|D^2v|\cdot(|\nabla v|-|\nabla v|_{L^\infty(\{v<0\})})_+^{n-1+\gamma}\\
\leq &C(1+\frac{1}{\gamma})\|Dv\|_{L^\infty(\Omega)}^{n+\gamma}(\|Dv\|_{L^\infty(\Omega)}+\|D\varphi\|_{L^\infty(\Omega)})
\end{split}
\label{5.1}
\end{equation}
 where $C$ is a positive constant depending merely on $n$.
\end{lemma}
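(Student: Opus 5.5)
The plan is a classical Pogorelov argument: apply the maximum principle to an auxiliary function. The natural choice is
\begin{equation*}
W:=\log(\varphi-v)_+ +\log v_{11}+(n-1+\gamma)\log\big(|\nabla v|-M_0\big)_+,\qquad M_0:=|\nabla v|_{L^\infty(\{v<0\})},
\end{equation*}
where $v_{11}$ is maximized over directions, so it controls $|D^2v|$ by convexity. Since $v\ge\varphi$ on $\partial\Omega$, the first factor vanishes there. On $\{v\le 0\}$, convexity of $v$ gives $|\nabla v|\le M_0$, up to a limiting argument on $\{v=0\}$. Hence the product vanishes outside the open set $\{v>0,\ v<\varphi,\ |\nabla v|>M_0\}\subset\Omega\cap\{v>0\}$, where $\det D^2v=1$ and $v$ is smooth. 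So $W$ attains an interior maximum at some $x_0$ in this set.

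At $x_0$ I will rotate coordinates so that $D^2v$ is diagonal with $v_{11}$ the largest eigenvalue. I then use the linearized operator $L=v^{ij}\partial_{ij}$, which satisfies $v^{ij}v_{ij}=n$ and $Lv_k=0$ (differentiate $\log\det D^2v=0$), and $Lv_{11}\ge v^{ii}v^{jj}v_{ij1}^2$ after differentiating twice. The first-order conditions $W_i=0$ relate $v_{11i}/v_{11}$ to $(v_i-\varphi_i)/(\varphi-v)$ and to the gradient-term derivative. From $0\ge LW$ I collect the terms.
\begin{itemize}
\item The $v_{11}$ term gives $Lv_{11}/v_{11}-v^{ii}v_{11i}^2/v_{11}^2\ge 0$, modulo the standard cancellation of third derivatives using $i=1$ versus $i\neq1$.
\item The first factor gives $L(\varphi-v)/(\varphi-v)-v^{ii}(\varphi-v)_i^2/(\varphi-v)^2$, with $L\varphi\ge0$ by convexity of $\varphi$ and $-Lv=-n$.
\item The gradient term, with $G=|\nabla v|$, gives $(n-1+\gamma)\big[LG/(G-M_0)-v^{ii}G_i^2/(G-M_0)^2\big]$. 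Here $LG\ge \sum_i v_{ii}(1-v_i^2/G^2)/G$, and this is the positive quantity that must dominate.
\end{itemize}

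The key point is that the gradient factor supplies the positive term $\sum_i v_{ii}\ge v_{11}$, divided by $G(G-M_0)$. The first factor's bad term $v^{ii}(\varphi-v)_i^2/(\varphi-v)^2$ is handled using $W_i=0$. One expresses $(\varphi-v)_i/(\varphi-v)$ through the $v_{11i}$ and $G_i$ terms, exactly as in Pogorelov. The term $v^{11}(\ldots)^2$ is absorbed by the third-derivative good term, while for $i\ne1$ the relation $G_i=\sum_k v_kv_{ki}/G=v_iv_{ii}/G$ makes $v^{ii}G_i^2\le v_{ii}v_i^2/G^2$. The exponent $n-1+\gamma$, in place of the usual 1, is chosen so that after multiplying by $(\varphi-v)v_{11}(G-M_0)^{n-1+\gamma}$ one can use the AM-GM bound $\det D^2v=1\Rightarrow \sum_{i\ge2}v^{ii}\lesssim v_{11}^{1/(n-1)}$. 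This turns the error terms into a power inequality, and the residual coefficient $\gamma$ (or $1+1/\gamma$) comes from the Cauchy–Schwarz splitting between the two gradient-type bad terms. The outcome is a bound $(\varphi-v)v_{11}(G-M_0)^{n-1+\gamma}\le C(1+\gamma^{-1})G^{n+\gamma}(|Dv|+|D\varphi|)$ at $x_0$, with constants depending only on $n$. Since $x_0$ is the maximum point of $W$, this bound holds everywhere.

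The main obstacle is the weight. Pogorelov's classical estimate uses $|\nabla v|^2/2$ as an exponential weight, whereas here a pure power of $(G-M_0)_+$ is needed so the constant scales like $\|Dv\|^{n+\gamma}$. I expect the delicate step to be showing that $(n-1+\gamma)v^{ii}G_i^2/(G-M_0)^2$ is beaten by $(n-1+\gamma)LG/(G-M_0)$. When $G-M_0$ is small relative to $G$ this holds directly. Otherwise I would split on whether $v_{11}$ is large compared to $v_{ii}$ ($i\ge2$), and use the $\gamma$ surplus in the exponent to absorb the cross terms, which is why the constant degenerates like $1/\gamma$.
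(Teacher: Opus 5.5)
\textbf{Verdict: genuine gap.} Your framework matches the paper's: the same auxiliary function, the maximum point in $\{v>0,\ v<\varphi,\ |\nabla v|>M_0\}$, diagonalization with $v_{11}$ maximal, and $Lv_k=0$, $Lv_{11}=v^{ii}v^{jj}v_{ij1}^2$. But the mechanism you propose for the decisive inequality fails at four points.

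\emph{$LG$ does not supply $v_{11}$.} At $x_0$, $LG=\sum_i v_{ii}(G^2-v_i^2)/G^3$. When $\nabla v\parallel e_1$ this equals $\sum_{i\ge 2}v_{ii}/G$, which can be negligible next to $v_{11}$. In that configuration the bad term $(n-1+\gamma)v^{ii}G_i^2/(G-M_0)^2=(n-1+\gamma)v_{11}/(G-M_0)^2$ is not controlled by $(n-1+\gamma)LG/(G-M_0)$ at all. In general the ratio of these two terms carries a factor $G/(G-M_0)$, so $G-M_0\ll G$ is the worst case, not the easy one.

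\emph{The AM--GM bound is reversed.} From $\det D^2v=1$ you get $\prod_{i\ge2}v^{ii}=v_{11}$, hence only the lower bound $\sum_{i\ge2}v^{ii}\ge (n-1)v_{11}^{1/(n-1)}$.

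\emph{You cannot discard $Lv_{11}/v_{11}-v^{ii}v_{11i}^2/v_{11}^2$ as merely nonnegative.} After the $(1,1)$ cancellation, the only good term involving $v_{111}$ is $\frac{1}{n-1}v^{11}v_{111}^2/v_{11}^2$. So the coefficient-one term $v^{11}(\cdot)^2$ that you obtain by solving $W_1=0$ for $(\varphi-v)_1/(\varphi-v)$ cannot be absorbed once $n\ge3$.

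\emph{Solving $W_i=0$ for $(\varphi-v)_i/(\varphi-v)$ is the wrong direction.} It leaves a cross term containing third derivatives, and it puts the square of the $G_i$ term on the wrong side.

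The missing idea, which is the paper's argument, is to keep
\[
\frac{v^{ii}v^{jj}v_{ij1}^2}{v_{11}}-v^{ii}\frac{v_{11i}^2}{v_{11}^2}\ \ge\ \sum_{i\ne1}v^{ii}\frac{v_{11i}^2}{v_{11}^2}+\frac{1}{n-1}\,v^{11}\frac{v_{111}^2}{v_{11}^2}.
\]
The last term comes from $\sum_{i\ne1}v^{ii}v_{ii1}=-v_{111}/v_{11}$ (that is, $v^{ij}v_{ij1}=0$) together with Cauchy--Schwarz. Then substitute $v_{11i}/v_{11}=-(\varphi-v)_i/(\varphi-v)-(n-1+\gamma)G_i/(G-M_0)$ into these good terms.

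For $i\ne1$, this cancels $-v^{ii}(\varphi-v)_i^2/(\varphi-v)^2$ exactly. It produces $(n-1+\gamma)^2v^{ii}G_i^2/(G-M_0)^2$, which beats $-(n-1+\gamma)v^{ii}G_i^2/(G-M_0)^2$.

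For $i=1$, it produces $\frac{(n-1+\gamma)^2}{n-1}v^{11}G_1^2/(G-M_0)^2$. This beats $-(n-1+\gamma)v^{11}G_1^2/(G-M_0)^2$ precisely because the exponent exceeds $n-1$. The surplus $\frac{\gamma(n-1+\gamma)}{n-1}v^{11}G_1^2/(G-M_0)^2$, together with the $v_{11}$-part of $(n-1+\gamma)LG/(G-M_0)$, gives the positive term $\gamma v_{11}/(G(G-M_0))$. This, not AM--GM, is why the exponent is $n-1+\gamma$ and the constant is $1+1/\gamma$.

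The leftovers are lower order:
\begin{itemize}
\item $-n/(\varphi-v)$;
\item $-\frac{n-2}{n-1}v^{11}(\varphi-v)_1^2/(\varphi-v)^2$;
\item first-order cross terms bounded by $2(n-1+\gamma)|\nabla(\varphi-v)|/((\varphi-v)(G-M_0))$.
\end{itemize}
Multiply by $(\varphi-v)G(G-M_0)$ and use a trivial dichotomy on the size of $(\varphi-v)v_{11}$. This gives $\gamma(\varphi-v)v_{11}\le C(1+\gamma)G(G+|\nabla\varphi|)$ at $x_0$, whence the estimate.
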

\begin{proof}
Denote
\begin{equation*}
G(x,\xi):=(\varphi-v)_+\cdot v_{\xi\xi}\cdot(|\nabla v|-\beta)_+^{n-1+\gamma}(x),\quad   x\in \Omega,\quad \xi \in \mathbb S^{n-1},
\end{equation*}
where $\beta=|\nabla v|_{L^\infty(\{v<0\})}$.
Then, we may assume that
\begin{equation*}
G(x_0,\xi_0)=\max_{\overline\Omega\times \mathbb S^{n-1}} G(x,\xi)>0,\quad (x_0,\xi_0)\in (\Omega\cap \{v>0\})\times \mathbb S^{n-1}.
\end{equation*}
 Without loss of generality, we assume $\xi_0=e_1$.
After a rotation of the coordinates, we may assume that $D^2v(x_0)$ is diagonal.
Then at $(x_0,e_1)$, we have
\begin{equation}
(\log G)_i=\frac{\varphi_i-v_i}{\varphi-v}+\frac{v_{11i}}{v_{11}}+\frac{(n-1+\gamma)v_kv_{ik}}{|\nabla v|(|\nabla v|-\beta)}=0,\quad i=1,2,\cdot\cdot\cdot,n.
\label{5.6}
\end{equation}
Differentiating the equation  $\det D^2 v=1$ at $x_0$ gives that
\begin{equation}
v^{ij}v_{ijl}=0,\quad l=1,2,\cdot\cdot\cdot,n;
\label{5.8}
\end{equation}
\begin{equation}
v^{ij}v_{ij11}=v^{ii}v^{jj}v_{ij1}^2.
\label{5.9}
\end{equation}
With \eqref{5.8} and \eqref{5.9}, a direct computation yields that 
\begin{align}
0&\geq v^{ij}(\log {G})_{ij}\notag\\
&=v^{ii}\Big[\frac{\varphi_{ii}-v_{ii}}{\varphi-v}-(\frac{\varphi_i-v_i}{\varphi-v})^2+\frac{v_{11ii}}{v_{11}}-(\frac{v_{11i}}{v_{11}})^2\notag\\
&\qquad+(n-1+\gamma)\cdot\frac{(v_{ii}^2+v_kv_{iik})|\nabla v|(|\nabla v|-\beta)-(v_kv_{ik})(2v_iv_{ii}-\beta\frac{v_iv_{ii}}{|\nabla v|})}{|\nabla v|^2(|\nabla v|-\beta)^2}\Big]\notag\\
&=-\frac{n}{\varphi-v}+\frac{v^{ii}\varphi_{ii}}{\varphi-v}-v^{ii}(\frac{\varphi_i-v_i}{\varphi-v})^2+\frac{v^{ii}v^{jj}v_{ij1}^2}{v_{11}}-v^{ii}(\frac{v_{11i}}{v_{11}})^2\notag\\
&\qquad+(n-1+\gamma)\cdot\frac{v_{ii}|\nabla v|^2(|\nabla v|-\beta)-2v_i^2v_{ii}|\nabla v|+\beta v_i^2v_{ii}}{|\nabla v|^3(|\nabla v|-\beta)^2}\notag\\
&\geq-\frac{n}{\varphi-v}-v^{ii}(\frac{\varphi_i-v_i}{\varphi-v})^2+v^{11}(\frac{v_{111}}{v_{11}})^2+2\sum\limits_{i\neq1}v^{ii}(\frac{v_{11i}}{v_{11}})^2+\sum\limits_{i\neq1}\frac{(v^{ii}v_{ii1})^2}{v_{11}}\notag\\
&\qquad-v^{ii}(\frac{v_{11i}}{v_{11}})^2+(n-1+\gamma)\cdot\frac{v_{ii}|\nabla v|^2(|\nabla v|-\beta)-2v_i^2v_{ii}|\nabla v|+\beta v_i^2v_{ii}}{|\nabla v|^3(|\nabla v|-\beta)^2}.
\label{5.11}
\end{align}
Notice that
\begin{equation}
\sum\limits_{i\neq1}(v^{ii}v_{ii1})^2\geq\frac{1}{n-1}(\sum\limits_{i\neq1}v^{ii}v_{ii1})^2=\frac{1}{n-1}(\frac{v_{111}}{v_{11}})^2.
\label{5.12}
\end{equation}
Substituting \eqref{5.6} and \eqref{5.12} into \eqref{5.11}, we have
\begin{align}
0&\geq-\frac{n}{\varphi-v}-v^{ii}(\frac{\varphi_i-v_i}{\varphi-v})^2+\sum\limits_{i\neq1}v^{ii}\Big[\frac{\varphi_i-v_i}{\varphi-v}+\frac{(n-1+\gamma)v_iv_{ii}}{|\nabla v|(|\nabla v|-\beta)}\Big]^2\notag\\
&\qquad+\frac{1}{(n-1)v_{11}}\Big[\frac{\varphi_1-v_1}{\varphi-v}+\frac{(n-1+\gamma)v_1v_{11}}{|\nabla v|(|\nabla v|-\beta)}\Big]^2\notag\\
&\qquad+(n-1+\gamma)\cdot\frac{v_{ii}|\nabla v|^2(|\nabla v|-\beta)-2v_i^2v_{ii}|\nabla v|+\beta v_i^2v_{ii}}{|\nabla v|^3(|\nabla v|-\beta)^2}\notag\\
&=-\frac{n}{\varphi-v}-\frac{n-2}{(n-1)v_{11}}\cdot(\frac{\varphi_1-v_1}{\varphi-v})^2\notag\\
&\qquad+\frac{2(n-1+\gamma)v_1(\varphi_1-v_1)}{(n-1)(\varphi-v)|\nabla v|(|\nabla v|-\beta)}\notag\\
&\qquad+2(n-1+\gamma)\sum\limits_{i\neq1}\frac{v_i(\varphi_i-v_i)}{(\varphi-v)|\nabla v|(|\nabla v|-\beta)}\notag\\
&\qquad+\frac{\gamma(n-1+\gamma)v_1^2v_{11}}{(n-1)|\nabla v|^2(|\nabla v|-\beta)^2}+\frac{(n-1+\gamma)v_{11}(|\nabla v|^2-v_1^2)}{|\nabla v|^3(|\nabla v|-\beta)}\notag\\
&\qquad+(n-1+\gamma)\sum\limits_{i\neq1}\frac{v_{ii}(|\nabla v|^2-v_i^2)(|\nabla v|-\beta)+(n-2+\gamma)v_i^2v_{ii}|\nabla v|}{|\nabla v|^3(|\nabla v|-\beta)^2}\notag\\
&\geq-\frac{n}{\varphi-v}-\frac{n-2}{(n-1)v_{11}}\cdot(\frac{\varphi_1-v_1}{\varphi-v})^2\notag\\
&\qquad-2(n-1+\gamma)\frac{|\nabla\varphi-\nabla v|}{(\varphi-v)(|\nabla v|-\beta)}+\frac{\gamma v_{11}}{|\nabla v|(|\nabla v|-\beta)}.\notag
\end{align}
Hence
\begin{equation*}
\gamma(\varphi-v)v_{11}\leq C(1+\gamma)|\nabla v|(|\nabla v|+|\nabla\varphi|).
\end{equation*}
Thus
\begin{equation*}
G\leq C(1+\frac{1}{\gamma})|Dv|^{n+\gamma}(|Dv|+|D\varphi|)\quad\text{at}\quad(x_0,e_1).
\end{equation*}
 \eqref{5.1} follows immediately.
\end{proof}
The following estimate is essentially due to  \cite{2}.
\begin{lemma}
\label{Lemma 5.2}
Suppose $n\geq2$, $W$ is a bounded domain in $\mathbb{R}^n$. 
Let  $\bar{v}\in C^\infty(\overline{W})$ be convex, which solves
\begin{equation}
\det D^2\bar{v}(x)=g(x_n)(-\bar{v}_n(x))^{n+2}>0,\quad x\in W
\label{5.16}
\end{equation}
for some $g\in C^2(\mathbb{R})$. For $\forall A>10\|\nabla_{x'}\bar{v}\|_{L^\infty(W)}^2$, if $(-\bar{v})\cdot{\bar{v}_{11}}\cdot(\bar{v}_1^2+A)$ obtains its maximum at $x_0\in W$, we have
\begin{equation*}
(-\bar{v})\cdot{\bar{v}_{11}} \leq CA\quad in\quad W,
\end{equation*}
where $C$ is a universal constant.
\end{lemma}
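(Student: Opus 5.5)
We plan to follow Savin's Pogorelov-type computation. Set
\[
H(x,\xi)=(-\bar v)\,\bar v_{\xi\xi}\,(\bar v_1^2+A),
\]
but work only with $\xi=e_1$, since the statement fixes that direction. At the interior maximum point $x_0$ we are free to rotate the $x'$ variables so that $D^2_{x'}\bar v(x_0)$ is diagonal. We cannot rotate $x_n$, because the right-hand side of \eqref{5.16} singles out $e_n$. Note that $\bar v<0$ near $x_0$, since otherwise $H\le 0$ and there is nothing to prove.

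The first step is to write out the first-order conditions for $\log H$ at $x_0$:
\[
\frac{\bar v_i}{\bar v}+\frac{\bar v_{11i}}{\bar v_{11}}+\frac{2\bar v_1\bar v_{1i}}{\bar v_1^2+A}=0 .
\]
Next we differentiate the equation in the form $\log\det D^2\bar v=\log g(x_n)+(n+2)\log(-\bar v_n)$, first once and then twice in the $e_1$ direction. This gives
\[
\bar v^{ij}\bar v_{ij1}=(n+2)\frac{\bar v_{n1}}{\bar v_n},
\]
and
\[
\bar v^{ij}\bar v_{ij11}=\bar v^{ik}\bar v^{jl}\bar v_{ij1}\bar v_{kl1}+(n+2)\Big(\frac{\bar v_{n11}}{\bar v_n}-\frac{\bar v_{n1}^2}{\bar v_n^2}\Big).
\]
The terms involving $g$ drop out of the $e_1$-derivatives, because $g$ depends only on $x_n$. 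This is why the equation has this particular structure.

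We then expand $0\ge \bar v^{ij}(\log H)_{ij}$. Using $\bar v^{ij}\bar v_{ij}=n$, the $\bar v$ factor contributes $\frac{n}{\bar v}-\bar v^{ij}\frac{\bar v_i\bar v_j}{\bar v^2}$. The $\bar v_{11}$ factor contributes the third-derivative quadratic form plus the lower-order terms $(n+2)\frac{\bar v_{n11}}{\bar v_n\bar v_{11}}-(n+2)\frac{\bar v_{n1}^2}{\bar v_n^2\bar v_{11}}$. The $(\bar v_1^2+A)$ factor contributes
\[
\frac{2\bar v^{ij}\bar v_{1i}\bar v_{1j}+2\bar v_1\bar v^{ij}\bar v_{ij1}}{\bar v_1^2+A}-\frac{4\bar v^{ij}\bar v_1\bar v_{1i}\bar v_1\bar v_{1j}}{(\bar v_1^2+A)^2}.
\]
Its leading part is $\frac{2\bar v_{11}}{\bar v_1^2+A}$, and the subtracted term is controlled by $\frac{4\bar v_1^2}{\bar v_1^2+A}\cdot\frac{\bar v_{11}}{\bar v_1^2+A}$. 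Since $A>10\bar v_1^2$, this leaves a net good term $c\,\bar v_{11}/A$. That is exactly the purpose of the hypothesis on $A$.

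The main obstacle is controlling the $\bar v_n$-terms produced by the right-hand side, namely $\frac{\bar v_{n11}}{\bar v_n\bar v_{11}}$ and $\frac{\bar v_{n1}}{\bar v_n}$. Our plan is the following. First, we use the first-order condition with $i=n$ to substitute for $\bar v_{11n}/\bar v_{11}$, which expresses it through $\bar v_n/\bar v$ and $\bar v_1\bar v_{1n}/(\bar v_1^2+A)$. Second, the troublesome $\frac{\bar v_n}{\bar v\,\bar v_n}=\frac1{\bar v}$ piece then has a definite sign combined with $n/\bar v$, so it can be absorbed. Third, the $\bar v_{1n}$ cross terms are absorbed by Cauchy--Schwarz into the third-derivative quadratic form $\bar v^{ik}\bar v^{jl}\bar v_{ij1}\bar v_{kl1}-\bar v^{ij}\bar v_{11i}\bar v_{11j}/\bar v_{11}$, exactly as in \eqref{5.12}, and into $\bar v^{nn}$-weighted gradient terms. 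The exponent $n+2$ is what makes these cancellations exact, reflecting the projective invariance used in \cite{2}. We would track this carefully and not expect a loose estimate to suffice.

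Once these terms are absorbed, the remaining gradient terms $\bar v^{ii}\bar v_i^2/\bar v^2$ for $i\ne n$ are cancelled by the first-order condition, just as in the proof of Lemma \ref{Proposition 5.1.}. We are then left with
\[
0\ge \frac{C}{\bar v}+\frac{c\,\bar v_{11}}{A}
\]
at $x_0$, which gives $(-\bar v)\bar v_{11}\le CA$ at $x_0$. Finally, because $x_0$ is the maximum point of $H$ and $A\le \bar v_1^2+A\le 1.1A$, the bound transfers to all of $W$ with a universal constant.
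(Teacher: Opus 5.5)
There is a genuine gap, and it is in how you normalize coordinates at $x_0$. $H$ uses the fixed direction $e_1$ and is maximized only in $x$, not also over directions as in Lemma \ref{Proposition 5.1.}. So a rotation of the $x'$ variables that diagonalizes $D^2_{x'}\bar v(x_0)$ moves $e_1$ and changes $H$. Even if $e_1$ is kept, $\bar v_{1n}(x_0)\neq0$ in general, so $\bar v^{ij}$ is not diagonal and $\bar v^{1j}\neq0$ for some $j\neq1$. Both mechanisms you rely on need $\bar v_{1j}(x_0)=0$ for all $j\neq1$ and a diagonal Hessian: the cancellation of gradient terms through the first-order condition, and the Cauchy--Schwarz step modeled on \eqref{5.12}.

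The missing idea is the paper's shear $y_1=x_1+\sum_{i\ge2}\frac{\bar v_{1i}(x_0)}{\bar v_{11}(x_0)}x_i$, $y_i=x_i$ ($i\ge2$), followed by a rotation of $(y_2,\dots,y_n)$. This has unit Jacobian and leaves $\partial_1$, hence $G$, unchanged. It also kills every $\bar v_{1i}(x_0)$ with $i\ge2$, including $i=n$. Your claim that $x_n$ must not be touched is what blocks you. The equation simply becomes $\det D^2\bar v=g(y\cdot\eta)(\xi\cdot\nabla\bar v)^{n+2}$ with $\eta_1=0$, and $\eta_1=0$ is all that is needed for $g$ to drop out of $\partial_1$-derivatives.

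Your bookkeeping of the closing inequality is also wrong.

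In the diagonal frame, the first-order condition for $i\neq1$ reads $\bar v_{11i}/\bar v_{11}=-\bar v_i/\bar v$. This cancels the gradient terms with $i\neq1$, not $i\neq n$. The term $-\bar v_1^2/(\bar v^2\bar v_{11})$ survives, because the $\bar v_{111}^2$ contributions cancel among themselves.

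Meanwhile $\frac n{\bar v}-\frac{n+2}{\bar v}=-\frac2{\bar v}>0$ is a good term, so no negative $C/\bar v$ remains. With your accounting, both surviving terms in $0\ge \frac{C}{\bar v}+\frac{c\bar v_{11}}{A}$ would be positive, which shows a term has been lost.

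Write $Q=\xi\cdot\nabla\bar v_1$ and $P=\xi\cdot\nabla\bar v$. The bad term from the equation is $-(n+2)\frac{Q^2}{\bar v_{11}P^2}$; the factor $n+2$ is dropped in \eqref{8.6} but is used in \eqref{8.8}. It is beaten by the $\frac{(n+2)^2}{n-1}\frac{Q^2}{\bar v_{11}P^2}$ coming from \eqref{8.7}, because $n+2>n-1$. Completing the square in $Q/P$ leaves $-\frac1{3\bar v_{11}}\big(\frac{\bar v_1}{\bar v}+\frac{2\bar v_1\bar v_{11}}{\bar v_1^2+A}\big)^2$, whose second piece consumes part of the good term.

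The actual closing inequality is \eqref{8.8}. The estimate therefore comes from $(\bar v\,\bar v_{11})^2\lesssim A\bar v_1^2\lesssim A^2$ at $x_0$, and $A>10\bar v_1^2$ is used twice: once to keep the good term positive, and once at this last step. Your final transfer from $x_0$ to all of $W$ is fine as written.
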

\begin{proof}
The proof is essentially the same as that in \cite{2}. For the reader's convenience, we provide a brief proof.
Suppose 
\begin{equation*}
G(x_0)=\max_{\overline W} G(x)=\max_{\overline W}(-\bar{v})\cdot{\bar{v}_{11}}\cdot(\bar{v}_1^2+A), \quad x_0\in W.
\end{equation*}
Without loss of generality, we can assume that $\bar{v}(x_0)<0$. After a coordinate transformation
\begin{equation*}
y_1=x_1+\sum\limits_{i=2}^n\frac{\bar{v}_{1i}(x_0)}{\bar{v}_{11}(x_0)}x_i,\quad y_i=x_i(i=2,3,\cdots,n),
\end{equation*}
and then a rotation of the coordinates $y_2,\cdots,y_n$, we may assume $D^2\bar{v}$ is diagonalized and $x_0\rightarrow y_0$. 
Also, equation \eqref{5.16} is transformed to
\begin{equation}
\det D^2\bar{v}(y)=g(y\cdot\eta)\cdot(\xi\cdot\nabla\bar{v}(y))^{n+2}>0,\quad y\in U
\label{8.1}
\end{equation}
with $\eta_1=0$.
At $y_0$, we have
\begin{equation}
(\log G)_i=\frac{\bar{v}_i}{\bar{v}}+\frac{\bar{v}_{11i}}{\bar{v}_{11}}+\frac{2\bar{v}_1\bar{v}_{1i}}{\bar{v}_1^2+A}=0,\quad i=1,2,\cdots,n.
\label{8.3}
\end{equation}
Differentiating \eqref{8.1} twice gives that
\begin{equation}
\frac{\bar{v}_{ii1}}{\bar{v}_{ii}}=(n+2)\frac{\xi\cdot\nabla\bar{v}_1}{\xi\cdot\nabla\bar{v}},
\label{8.4}
\end{equation}
\begin{equation}
\frac{\bar{v}_{ii11}}{\bar{v}_{ii}}=(n+2)\frac{\xi\cdot\nabla\bar{v}_{11}}{\xi\cdot\nabla\bar{v}}-(n+2)\frac{(\xi\cdot\nabla\bar{v}_1)^2}{(\xi\cdot\nabla\bar{v})^2}+\frac{\bar{v}_{ij1}^2}{\bar{v}_{ii}\bar{v}_{jj}}.
\label{8.5}
\end{equation}
With \eqref{8.3}, \eqref{8.4}, \eqref{8.5}, one can show that
\begin{equation}
0\geq \bar v^{ii}(\log G)_{ii}=-\frac{2}{\bar{v}}-\frac{\bar{v}_1^2}{\bar{v}^2\bar{v}_{11}}+\frac{2\bar{v}_{11}(A-\bar{v}_1^2)}{(\bar{v}_1^2+A)^2}-\frac{(\xi\cdot\nabla\bar{v}_1)^2}{\bar{v}_{11}(\xi\cdot\nabla\bar{v})^2}+\sum\limits_{i\neq1}\frac{\bar{v}_{ii1}^2}{\bar{v}_{11}\bar{v}_{ii}^2}.
\label{8.6}
\end{equation}
Notice that
\begin{equation}
\sum\limits_{i\neq1}\frac{\bar{v}_{ii1}^2}{\bar{v}_{11}\bar{v}_{ii}^2}\geq\frac{1}{(n-1)\bar{v}_{11}}(\sum\limits_{i\neq1}\frac{\bar{v}_{ii1}}{\bar{v}_{ii}})^2=\frac{1}{(n-1)\bar{v}_{11}}\Big((n+2)\frac{\xi\cdot\nabla\bar{v}_1}{\xi\cdot\nabla\bar{v}}-\frac{\bar{v}_{111}}{\bar{v}_{11}}\Big)^2.
\label{8.7}
\end{equation}
Substituting \eqref{8.7} and \eqref{8.3} into \eqref{8.6}, we have
\begin{align}
0\geq&-\frac{2}{\bar{v}}-\frac{\bar{v}_1^2}{\bar{v}^2\bar{v}_{11}}+\frac{2\bar{v}_{11}(A-\bar{v}_1^2)}{(\bar{v}_1^2+A)^2}\notag\\
&+\frac{3(n+2)(\xi\cdot\nabla\bar{v}_1)^2}{(n-1)\bar{v}_{11}(\xi\cdot\nabla\bar{v})^2}\notag\\
&+\frac{2(n+2)}{(n-1)\bar{v}_{11}}\cdot\frac{\xi\cdot\nabla\bar{v}_1}{\xi\cdot\nabla\bar{v}}\cdot(\frac{\bar{v}_1}{\bar{v}}+\frac{2\bar{v}_1\bar{v}_{11}}{\bar{v}_1^2+A})\notag\\
&+\frac{1}{(n-1)\bar{v}_{11}}\cdot(\frac{\bar{v}_1}{\bar{v}}+\frac{2\bar{v}_1\bar{v}_{11}}{\bar{v}_1^2+A})^2\notag\\
\geq&-\frac{2}{\bar{v}}-\frac{\bar{v}_1^2}{\bar{v}^2\bar{v}_{11}}+\frac{2\bar{v}_{11}(A-\bar{v}_1^2)}{(\bar{v}_1^2+A)^2}\notag\\
&-\frac{1}{3\bar{v}_{11}}\cdot(\frac{\bar{v}_1}{\bar{v}}+\frac{2\bar{v}_1\bar{v}_{11}}{\bar{v}_1^2+A})^2\notag\\
\geq&-\frac{2}{\bar{v}}-\frac{5\bar{v}_1^2}{3\bar{v}^2\bar{v}_{11}}+\frac{2\bar{v}_{11}(A-\frac{7}{3}\bar{v}_1^2)}{(\bar{v}_1^2+A)^2}.
\label{8.8}
\end{align}
Since $A>10\bar{v}_1^2$, \eqref{8.8} implies that
\begin{equation*}
(-\bar{v})\cdot\bar{v}_{11}\leq CA, \quad \text{at} \quad y_0.
\end{equation*}
This ends the proof of the present lemma.
\end{proof}
\begin{remark}
\label{Remark 5.3}
In \cite{2}, Savin used the  auxiliary function $G(x)=(-\bar{v})\cdot\bar{v}_{11}\cdot e^{\frac{\sigma}{2}\bar{v}_1^2}$. 
However, for later use, we need $\bar v_{11}$ controlled by a polynomial of $v_1$. Hence, we replace $e^{\frac{\sigma}{2}\bar{v}_1^2}$ by $\bar v_1^2+A$.
\end{remark}
\subsection{The Optimal Regularity of Free Boundary}
Before the proof of Theorem \ref{Theorem 1.1.}, we need the following estimate of $D^2h$ first.
\begin{proposition}
\label{Proposition 6.1.}
Under the assumptions of Theorem \ref{Theorem 1.1.}, there exists a positive constant $C$ such that
\begin{equation}
|D^2h(x_0')|\leq C|x_0'|^{-C}\quad\text{for any}\quad 0<|x_0'|<l_0.
\label{6.1}
\end{equation}
\end{proposition}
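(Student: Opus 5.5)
We will derive the bound by representing $h$ through a level-set/hodograph type transformation of $v$ near the free boundary, so that $D^2h$ at $x_0'$ is controlled by second derivatives of an auxiliary convex function satisfying an equation of the form \eqref{5.16}; Lemma \ref{Lemma 5.2} then supplies an interior Pogorelov bound. Concretely, fix $x_0'$ with $r:=|x_0'|$ small and work in a region $W$ around the free boundary point $(x_0',-h(x_0'))$ at scale comparable to $h(x_0')\sim r(-\log r)^{-n/2}$ in the $x_n$-direction and comparable to $r$ (or a fixed fraction of it) in the $x'$ directions. In this region, consider the partial Legendre-type change of variables that swaps $x_n$ with the value of $v$: for each small level $s>0$, the level set $\{v=s\}$ is a graph $x_n=-\bar v(x',s)$ (more precisely we take $\bar v$ so that $\{v=s\}$ corresponds to $\{\bar v=\cdot\}$ with the roles of $x_n$ and $s$ interchanged, with a sign convention making $\bar v$ convex and negative inside). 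A standard computation shows that if $\det D^2 v=1$, then the inverse-function $\bar v$ satisfies $\det D^2\bar v = g(x_n)(-\bar v_n)^{n+2}$, which is exactly \eqref{5.16}. The free boundary is the limit $s\to0$, and $D^2_{x'}h$ is the limit of tangential second derivatives of the level-set function.

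The first step is to get quantitative nondegeneracy and gradient control in $W$: by Corollary \ref{Corollary 4.5} we have $v\ge\mu_1|x'|^2$ once $x_n>-\mu_1h(x')$, while convexity and the upper bound of Lemma \ref{Proposition 4.4.} together with $v_0\sim|x|^2$ give $v\lesssim |x'|^2$ and $|\nabla v|\lesssim r^{-C}$ there. Thus $v$ grows from $0$ on $\Gamma$ to order $r^2$ across a strip of width $\sim h$, so $|v_n|\gtrsim r^2/h\ge r^{C}$, and the tangential gradient $|\nabla_{x'}\bar v|$ (slope of level sets, comparable to $|\nabla h|$, bounded by Lemma \ref{Proposition 3.2.}) is bounded by $r^{-C}$. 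Everything is polynomial in $r$, which is why only a polynomial bound $|x_0'|^{-C}$ is claimed; the logarithmic factors are absorbed.

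The second step applies Lemma \ref{Lemma 5.2} in a section $W$ where $\bar v<0$ with $\bar v=0$ on $\partial W$ (obtained by subtracting a supporting affine function of the level-set graph, using strict convexity from Lemma \ref{Lemma 3.1} and Lemma \ref{Lemma 2.3} to ensure the section is compactly contained away from $0$ and from $\partial\Omega$ at distance $\gtrsim r^{C}$). Choosing $A=10\|\nabla_{x'}\bar v\|_\infty^2+1\lesssim r^{-C}$, for every direction $e$ tangential we get $(-\bar v)\bar v_{ee}\le CA$ at the max point and hence everywhere, giving $\bar v_{ee}\lesssim r^{-C}/(-\bar v)$. At the center of the section $-\bar v$ is comparable to the section height, which by the growth bounds is $\gtrsim r^{C}$, so $D^2_{x'}\bar v\lesssim r^{-C}$ at points near $(x_0',0)$ in the $(x',s)$ variables, uniformly as $s\to0$. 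Passing to $s=0$ bounds $D^2h(x_0')$ from above; since $h$ is convex (the zero set is convex), $D^2h\ge0$, yielding \eqref{6.1}.

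The main obstacle is the second step: constructing a section of $\bar v$ that is centered at the free boundary point, is quantitatively nondegenerate (height and inner/outer radii polynomial in $r$), and on which the hodograph function is smooth up to $s=0$, since Lemma \ref{Lemma 5.2} needs an interior maximum. I would first try to handle this by extending $\bar v$ across $s=0$ (using the $C^{2,\gamma}$ regularity of $\Gamma\setminus\{0\}$ from Lemma \ref{Lemma 2.3}) or by applying the estimate on levels $s\ge\epsilon$ and letting $\epsilon\to0$, with constants independent of $\epsilon$; the quantitative size of the section is where Proposition \ref{Proposition 4.1} and Corollary \ref{Corollary 4.5} enter.
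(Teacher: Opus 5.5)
Your architecture matches the paper's Step~3: a hodograph transform that turns the level sets of $v$ into graphs, so the new function solves \eqref{5.16}, followed by Lemma \ref{Lemma 5.2} with $A$ comparable to the squared tangential slope, and polynomial bookkeeping in $|x_0'|$. However, the point you flag as ``the main obstacle'' is where the proof actually lives, and you leave it unresolved.

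First, the nondegeneracy claimed in your first step is false near $\Gamma$. Since $v\in C^1(\Omega)$ attains its minimum $0$ on $\{v=0\}$, we have $\nabla v=0$ on $\Gamma\cap\Omega$. The averaged slope $r^2/h$ across the strip therefore gives no pointwise lower bound for $v_n$, and your level-set function satisfies $\bar v_s=-1/v_n\to-\infty$ as $s\to0^+$. So $\bar v$ is smooth only for $s>0$, the free boundary point $(x_0',0)$ lies on the boundary of its domain, and no section $\{\bar v<\ell\}$ contains it in its interior. Neither remedy you suggest works. Running Lemma \ref{Lemma 5.2} on $\{s\ge\epsilon\}$ is circular, because the maximum of $G$ may sit on $\{s=\epsilon\}$, where $G$ is precisely the quantity you want to bound. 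Extending $\bar v$ across $s=0$ via Lemma \ref{Lemma 2.3} gives no quantitative constant, and an arbitrary extension need not satisfy an equation of the form \eqref{5.16}, which the maximum principle computation requires.

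The paper supplies two ingredients your plan lacks.

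\emph{Regularization.} Let $v^\varepsilon$ solve $\det D^2v^\varepsilon=a^\varepsilon(v^\varepsilon)$, with $a^\varepsilon=1$ on $(0,\infty)$ and $a^\varepsilon=\varepsilon$ on $(-\infty,-\varepsilon)$. Then $v^\varepsilon$ is smooth and strictly convex across the former free boundary, and slightly negative inside the contact set. By \eqref{1004-5}, \eqref{1004-3} and convexity, $v^\varepsilon_{y_n}>0$ on a region $\bar U$ containing $X_0$ in its interior, and the hodograph $\tilde v^\varepsilon$ solves \eqref{5.16} with $g=a^\varepsilon(y_{n+1})$. This is usable because the constant in Lemma \ref{Lemma 5.2} depends neither on $g$ nor on any lower bound for $-\bar v_n$, hence not on $\varepsilon$.

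\emph{Boundary control.} Take $U=\Omega\cap\{v<|x_0'|^A\}\cap\tilde H^+$, where the tilted hyperplane $\tilde H$ lies inside the contact set near $X_0$. With $y_n$ normal to $\tilde H$, one gets $\tilde v^\varepsilon=0$, hence $G=0$, on the image of $\tilde H$. The rest of $\partial W$ is the level set $\{v=|x_0'|^A\}$ in the positive phase, where $G$ must be bounded separately. The paper does this with the other Pogorelov estimate, Lemma \ref{Proposition 5.1.} (affine $\varphi$ and cutoff $(|\nabla v|-\beta)_+^{2n-2}$), combined with $Dv^\varepsilon\cdot\nu\gtrsim|x_0'|^{A-\theta}$ and the formula expressing $\tilde v^\varepsilon_{11}$ through $D^2v^\varepsilon$.

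So the argument does not use a genuine section with $G=0$ on all of $\partial W$, and your plan has no substitute for Lemma \ref{Proposition 5.1.}. With both ingredients in place, the interior case of Lemma \ref{Lemma 5.2} gives $(-\tilde v^\varepsilon)\tilde v^\varepsilon_{11}\le C_0|x_0'|^{-C_0}$. Since $-\tilde v^\varepsilon\gtrsim h(x_0')$ near $X_0$, letting $\varepsilon\to0$ yields \eqref{6.1}.
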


\begin{proof} It suffices to show \eqref{6.1} for sufficiently small $|x_0'|$. Let $X_0:=\Big(x_0',-h(x_0')\Big)$. 
We divide the proof into three steps.
\par \textit{Step1. Construction of smooth approximate solutions $v^\varepsilon$.}\\
Let $v^\varepsilon$ solve
\begin{equation*}
\begin{cases}
\det D^2v^\varepsilon=a^\varepsilon(v^\varepsilon)\quad &\text{in}\quad\Omega\\
v^\varepsilon=v_0\quad &\text{on}\quad\partial\Omega
\end{cases}.
\end{equation*}
Here $\varepsilon\le a^\varepsilon(t)\le 1$ is a smooth function such that $a^\varepsilon(t)=1$, $t\in (0,+\infty)$; $a^\varepsilon(t)=\varepsilon$, $t\in (-\infty,-\varepsilon)$. 
Then $v^\varepsilon\in C^\infty(\Omega)$ satisfies $v^\varepsilon\leq v$  in $\Omega$.
From Lemma \ref{Lemma 2.1}, up to a subsequence, $v^{\varepsilon}\rightarrow v$ uniformly.
\par \textit{Step 2. Pogorelov-type estimate.}\\
Let 
\begin{equation}
H^+:=\{x_n>-2h(x_0')-\frac{4h(x_0')}{|x_0'|^2}x_0'\cdot(x'-x_0')\},\quad H:=\partial H^+.
\label{6.10}
\end{equation}
Let $\mu_1$ be given by Corollary \ref{Corollary 4.5}. Define
\begin{equation*}
\tilde{\Omega}:=\Omega\bigcap\{v<\frac{\mu_1}{8}|x_0'|^2\}\bigcap H^+.
\end{equation*}
Using \eqref{4.1}, \eqref{6.10} and Corollary \ref{Corollary 4.5}, by a direct computation, there exists a universal constant $\delta_0<\frac{1}{2}$ such that
\begin{equation}
\overline{\tilde{\Omega}}\subseteq\Omega\bigcap B_{|x_0'|^{\delta_0}}\bigcap\Big\{y\Big||y'|\geq\frac{1}{2}|x_0'|,\quad y_n\leq-\mu_1h(y')\Big\},
\label{6.12}
\end{equation}
\begin{equation}
B_{|x_0'|^{2\theta}}(\overline{\tilde{\Omega}}+[-2|x_0'|^\theta,-|x_0'|^\theta]\nu)\subseteq\{v=0\},
\label{6.13}
\end{equation}
where $\theta=\frac{\delta_0}{4n^2}$ and $\nu=(\frac{4h(x_0')}{|x_0'|^2}x_0',1)$ is the normal of the hyperplane $H$. Here $\overline{\tilde{\Omega}}\subseteq\{|y'|\geq\frac{1}{2}|x_0'|\}$ is a direct consequence of $\tilde{\Omega}\subseteq H^+$, $\overline{\tilde{\Omega}}\subseteq\{y_n\leq-\mu_1h(y')\}$ then follows from Corollary \ref{Corollary 4.5}. And a direct computation with \eqref{4.1}, \eqref{6.10} implies that $\overline{\tilde{\Omega}}\subseteq B_{|x_0'|^{\delta_0}}$, then \eqref{6.13} follows by \eqref{4.1}.

Let $\tilde{H}^+:=H^++\frac{h(x_0')}{2}e_n$, $\tilde{H}=\partial\tilde{H}^+$. 
From \eqref{6.12}-\eqref{6.13}, for a suitable large constant $C$, there exists an affine function $\varphi$ such that 
\begin{equation*}
\varphi(x)=|x_0'|^C,\quad x\in \tilde{H};\quad \varphi(x)=-|x_0'|^C,\quad x\in H;\quad \varphi(x)\leq\frac{\mu_1}{16}|x_0'|^2,\quad x\in \tilde \Omega.
\end{equation*}
From \eqref{6.12}, we know \begin{equation*}
\partial \tilde \Omega\subset\{v=\frac{\mu_1}{8}|x_0'|^2\}\cup H,\quad \partial\tilde \Omega\cap \partial \Omega=\emptyset.
\end{equation*}
Since $v^\varepsilon\rightarrow v$ uniformly in $\overline \Omega$, from the choice of $\varphi$, we know 
\begin{equation*}
v^\varepsilon\ge \varphi,\quad \text{on}\quad \partial \tilde \Omega
\end{equation*}
for $\varepsilon$ small enough. This allows us to apply Lemma \ref{Proposition 5.1.} to $v^\varepsilon$ in $\tilde \Omega$ to get
\begin{equation}
(\varphi-v^{\varepsilon})_+\cdot|D^2v^{\varepsilon}|\cdot(|\nabla v^{\varepsilon}|-|\nabla v^\varepsilon|_{L^\infty(\{v^\varepsilon<0\}\cap \tilde \Omega)})_+^{2n-2}\leq C_0|x_0|^{-C_0}\quad\text{in}\quad\tilde{\Omega}
\label{6.18}
\end{equation}
by taking $\gamma=n-1$ in Lemma \ref{Proposition 5.1.}. 
\par  \textit{Step 3. Hodograph transformation.}\\
For another large constant $A>>C$, define
\begin{equation*}
U:=\Omega\bigcap\{v<|x_0'|^A\}\bigcap\tilde{H}^+.
\end{equation*}
Then $\bar{U}\subseteq\tilde{\Omega}\subset\subset\Omega$.  By the definition of $U$ and \eqref{6.13}, we know 
\begin{equation}\label{1004-1}
v(x)=|x_0'|^A,\quad \text{on}\quad \partial U\backslash\tilde{H},\quad v(x)=0,\quad \text{on}\quad (\partial U\backslash\tilde{H})-|x_0'|^\theta \nu. 
\end{equation}
Hence, for sufficiently small $\varepsilon$, by convexity and \eqref{1004-1}, we have the following
\begin{equation}\label{1005-1}
Dv^\varepsilon\cdot \nu\gtrsim |x_0'|^{A-\theta},\quad \text{on}\quad \partial U\backslash\tilde{H}. 
\end{equation}
From the definition of $\varphi$, we also know $\varphi_{x_n}>0$ which implies $\varphi(x)\ge |x_0'|^C$ in $\tilde H^+$. 
Therefore, from $|\nabla v^\varepsilon|_{L^\infty(\{v^\varepsilon<0\}\cap \tilde \Omega)}\rightarrow 0$ as $\varepsilon \rightarrow 0$, \eqref{6.18} gives that 
\begin{equation}\label{1005-2}
|D^2v^{\varepsilon}|\leq C_0|x_0'|^{-C_0}\quad\text{on}\quad\partial U\backslash\tilde{H}
\end{equation}
for $\varepsilon$ small enough. 

In the following, we use a new orthonormal coordinate system $y=(y',y_n)$ where $\tilde H^+=\{y_n>0\}$. From Lemma \ref{Lemma 2.1}, we have 
\begin{equation}
[v^\varepsilon]_{C^\frac{1}{n}(\overline{\{v^\varepsilon<-\varepsilon\}})}\leq C\varepsilon^\frac{1}{n}.
\label{6.9}
\end{equation}
From \eqref{6.12} and $\max_{\overline U}v^\varepsilon>0$,  we can use \eqref{6.9} to get
\begin{equation}\label{1004-5}
v^\varepsilon\geq-\varepsilon-C\varepsilon^\frac{1}{n}|x_0'|^\frac{\delta_0}{n},\quad \text{in}\quad U. 
\end{equation}
From \eqref{6.9}, we also know 
\begin{equation}\label{1004-2}
\inf_{\Omega}v^\varepsilon \sim -\varepsilon^{\frac 1n}. 
\end{equation}
By \eqref{6.13} and \eqref{1004-2}, using the convexity of $v^\varepsilon$, 
we have 
\begin{equation}\label{1004-3}
v^\varepsilon\lesssim-\varepsilon^\frac{1}{n}|x_0'|^{2\theta}, \quad \text{in}\quad \bar{U}-\frac{3}{2}|x_0'|\nu-\frac 12|x_0'|^{2\theta} B_1(0).
\end{equation}
Combining \eqref{1004-5} and \eqref{1004-3}, we have $v^\varepsilon_{y_n}>0$ in $\bar{U}$. Then, we can express the graph of $v^{\varepsilon}|_{\bar{U}}$  by
\begin{equation*}
(y',-\tilde{v}^{\varepsilon}(y',y_{n+1}),y_{n+1}),\quad(y',y_{n+1})\in\bar{W}
\end{equation*}
with some bounded domain $W$ and locally convex smooth function $\tilde{v}^{\varepsilon}$ on $\bar{W}$. Following the calculations in \cite[Lemma 2.2]{1} or \cite[Proposition 4.3]{2}, one obtains that 
\begin{equation}
\nabla_{y'}\tilde{v}^{\varepsilon}(y',y_{n+1})=\frac{1}{v^{\varepsilon}_n}\nabla_{y'}v^{\varepsilon}(y',-\tilde{v}^{\varepsilon}(y',y_{n+1})),
\label{6.22}
\end{equation}
\begin{equation*}
\det D^2\tilde{v}^{\varepsilon}(y',y_{n+1})=(-\tilde{v}^{\varepsilon}_{n+1}(y',y_{n+1}))^{n+2}\cdot a^{\varepsilon}(y_{n+1})\quad\text{for}\quad(y',y_{n+1})\in W.
\end{equation*}
Then, from \eqref{1004-5}, \eqref{1004-3} and convexity of $v^\varepsilon$, for any $\tilde y'\in \mathbb S^{n-2}$, we have 
\begin{equation*}
\nabla_y  v^{\varepsilon}(y',y_n)\cdot (\frac 12|x_0'|^{2\theta}\tilde y',\frac{3}{2}|x_0'|)\ge v^{\varepsilon}(y',y_n)-v^{\varepsilon}(y'-\frac 12|x_0'|^{2\theta}\tilde y',y_n-\frac{3}{2}|x_0'|)\ge 0.
\end{equation*}
This implies 
\begin{equation*}
|\nabla_{y'}\tilde{v}^{\varepsilon}|\leq C|x_0'|^{-C}\quad\text{in}\quad W.
\end{equation*}
Now we consider the quantity \begin{equation*}
G(y)=(-\tilde{v}^{\varepsilon})\cdot{\tilde{v}^{\varepsilon}_{11}}\cdot((\tilde{v}^{\varepsilon}_1)^2+10C|x_0'|^{-C}),\quad y\in W.
\end{equation*}
Let $\bar y$ be the maximum point of $G(y)$ in $\overline W$. 
\begin{itemize}
\item[(1).] $\bar y\in \partial W$ and $\tilde{v}^{\varepsilon}(\bar y)=0$.  Then $G(\bar y)=0$.
\item[(2).] $\bar y\in \partial W$ and $v(\bar{y}',-\tilde{v}^\varepsilon(\bar{y}))=|x_0'|^A$.  A direct computation gives that 
\begin{equation*}
\widetilde v_{11}^\varepsilon(y',y_{n+1})=\frac{v_{11}^\varepsilon (v_n^\varepsilon)^2-2v_{n1}^\varepsilon v_1^\varepsilon v_n^\varepsilon+v_{nn}^\varepsilon (v_1^\varepsilon)^2
}{(v_n^\varepsilon)^3}.
\end{equation*}
Then we can use \eqref{1005-1} and \eqref{1005-2} to get  $G(\bar y)\le C_0 |x_0'|^{-C_0}$. 
\item[(3).] $\bar y\in W$. Then we can apply Lemma \ref{Lemma 5.2} to get $G(\bar y)\le C_0 |x_0'|^{-C_0}$.
\end{itemize}
From (1)-(3) above, we know 
\begin{equation*}
(-\tilde v^{\varepsilon})\tilde{v}_{11}^{\varepsilon}\le C_0 |x_0'|^{-C_0},\quad \text{in}\quad W. 
\end{equation*}
In particular, near $(X_0',v^\varepsilon(X_0))$, we get
\begin{equation*}
\tilde{v}_{11}^{\varepsilon}\leq C_0|x_0'|^{-C_0}
\end{equation*}
 This implies that the principle curvature $\kappa_1$ of $\Gamma$ at $X_0$ satisfies
\begin{equation*}
\kappa_1\leq C_0|x_0'|^{-C_0}.
\end{equation*}
Similar estimates also hold for $\kappa_i$, $i=2,\cdots,n-1$. 
Thus \eqref{6.1} holds.
\end{proof}
\begin{figure}[H]
\centering
\caption{(Actually $\overline{\tilde{\Omega}}$ is strictly below $x_n=0$.)}
\label{3}
\begin{tikzpicture}[scale=2]
\draw[-latex] (-1,0) -- (6,0) node[below] {$x'$};
\draw[-latex] (0,-5) -- (0,0.25) node[left] {$x_n$};
\path[name path=curve] plot[domain=1:4, samples=200]
(\x, {-0.3/384*(2+abs(\x))^4*abs(\x)*(-ln(abs(\x)/8))^(-1/2)});
\path[name path=line] (1,{1/3*(ln(4))^(-1/2)}) -- (4,{-11/3*(ln(4))^(-1/2)});
\path[name intersections={of=curve and line, name=int}];
\fill[pattern=north east lines, pattern color=black]
(int-1) -- plot[domain=1.043:3.534, samples=200]
(\x, {-0.3/384*(2+abs(\x))^4*abs(\x)*(-ln(abs(\x)/8))^(-1/2)})
-- (int-2) -- cycle;
\node(U0label) at (1.5,-1.1) {$U$};
\draw [->] (U0label) -- (2.6,-1.1);
\draw[domain=-1:3.5, samples=200] plot
(\x, {-0.5/384*(2+abs(\x))^4*abs(\x)*(-ln(abs(\x)/8))^(-1/2)})
node[left] {$x_n=-h(x')$};
\draw[domain=0.5:5.2, samples=200, thick] plot
(\x, {-0.1/384*(2+abs(\x))^4*abs(\x)*(-ln(abs(\x)/8))^(-1/2)});
\draw[domain=1:3.7, samples=200] plot
(\x, {-0.3/384*(2+abs(\x))^4*abs(\x)*(-ln(abs(\x)/8))^(-1/2)});
\draw[shorten >= -10cm, shorten <= -1cm, thick] (1,0) --
(2,{-4/3*(ln(4))^(-1/2)}) node[above] at (0.6,0.3) {$H$};
\draw[shorten >= -6cm, shorten <= -0.1cm]
(1,{1/3*(ln(4))^(-1/2)}) --
(2,{-(ln(4))^(-1/2)}) node[above] at (1,{1/3*(ln(4))^(-1/2)}) {$\tilde{H}$};
\node at (3.5,-1.5) {\Large$\tilde{\Omega}$};
\fill (1,0) circle (0.5pt);
\fill (2,{-4/3*(ln(4))^(-1/2)}) circle (1pt);
\fill (2,{-2/3*(ln(4))^(-1/2)}) circle (1pt);
\fill (2,{-(ln(4))^(-1/2)}) circle (1pt);
\node(X0label) at (2.6,{-2/3*(ln(4))^(-1/2)}) {$X_0$};
\draw [->] (X0label) -- (2,{-2/3*(ln(4))^(-1/2)});
\node at (1,-2) {$v=0$};
\end{tikzpicture}
\end{figure}
Now we are prepared to prove Theorem \ref{Theorem 1.1.}.
\begin{proof}[Proof of Theorem 1.1]
Recalling Lemma \ref{Proposition 3.2.}, the free boundary $\Gamma$ is $C^1$.
For $r>0$, denote
\begin{equation*}
 \omega(r)=\sup\limits_{|y-x|<r}|\nabla_{x'}h(y)-\nabla_{x'}h(x)|.
\end{equation*}
By convexity and \eqref{4.1}, we have $\omega(r)\gtrsim (-\log r)^{-\frac{n}{2}}$.
In the following, we estimate the upper bound of $\omega(r)$. Let $C>1$ be as in Proposition \ref{Proposition 6.1.}. Then for $|x_0'|,|y_0'|<l_0$ with $|x_0'-y_0'|<r$, there are two cases.
\par  Case I: $|x_0'|\geq r^\frac{1}{2C}$.\\
Proposition \ref{Proposition 6.1.} implies $|D^2h|\lesssim r^{-\frac{1}{2}}\lesssim r^{-1}(-\log r)^{-\frac{n}{2}}$ on $[x_0,y_0]$, and
\begin{equation*}
|\nabla_{x'}h(y_0')-\nabla_{x'}h(x_0')|\lesssim(-\log r)^{-\frac{n}{2}}.
\end{equation*}
\par Case II: $|x_0'|<r^{\frac{1}{2C}}$.\\
From convexity, \eqref{4.1} implies $|\nabla_{x'}h(x_0')|,|\nabla_{x'}h(y_0')|\lesssim(-\log r)^{-\frac{n}{2}}$, and
\begin{equation*}
|\nabla_{x'}h(y_0')-\nabla_{x'}h(x_0')|\lesssim(-\log r)^{-\frac{n}{2}}.
\end{equation*}
\par Combining the lower bound of $\omega(r)$, Case I and Case II, one knows
\begin{equation*}
\omega(r)\sim(-\log r)^{-\frac{n}{2}}. 
\end{equation*}
Hence $\int\nolimits_0^{l_0}\frac{\omega(r)}{r}dr=+\infty$ when $n=2$, while $\int\nolimits_0^{l_0}\frac{\omega(r)}{r}dr<+\infty$ when $n\geq3$. This ends the proof of Theorem \ref{Theorem 1.1.}.
\end{proof}
\section{Super-quadratic Growth Boundary value}
This section is devoted to the proof Theorem \ref{Theorem 1.2.}. 
Sharp asymptotic estimates on $\Gamma$ near $0$ are needed. 
\begin{proposition}
\label{Proposition 7.1}
Under the assumption of Theorem \ref{Theorem 1.2.}, we have
\begin{equation*}
h(x')\sim |x'|^\alpha\quad\text{for}\quad 0<|x'|<l_0.
\end{equation*}
\end{proposition}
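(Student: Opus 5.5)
The plan is to prove the two bounds separately: the lower bound with the homogeneous sub-barrier of Lemma~\ref{Lemma 4.2}, and the upper bound with a super-barrier built directly in $x$-space, exploiting that the problem is now scale invariant. Under the scaling $x'\mapsto \lambda x'$, $x_n\mapsto \lambda^\alpha x_n$, $v\mapsto \lambda^{2+\frac2n(\alpha-1)}v$, the equation $\det D^2v=\chi_{\{v>0\}}$ is preserved, because the determinant scales by $\lambda^{n(2+\frac2n(\alpha-1))-2(n-1)-2\alpha}=1$. The boundary datum $v_0\sim |x'|^{2+\frac2n(\alpha-1)}$ on $S$ is also preserved. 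So no logarithmic correction should appear.

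\textbf{Lower bound.} For $\alpha\in(1,\alpha_0)$ I use $w_\alpha$ from Lemma~\ref{Lemma 4.2} directly. For general $\alpha>1$ I first redo that construction with a fixed profile $g$: take $g(t)=(t+1)^{\frac{n+1}{n}}$ near $t=-1$, glued in a $C^1$ convex way to a quadratic, exactly as in Lemma~\ref{Lemma 4.2}. The determinant computation in that proof shows $\det D^2 w\ge c(\alpha)>0$ on $\{w>0\}$, since the leading term carries the factor $\alpha-1>0$. Next I rescale, setting $\bar w=A^{-1}w(x',A^{N}x_n)$ with $A$ large. This makes $\det D^2\bar w\ge 1$ on $\{\bar w>0\}$ and $\bar w\le v_0$ on $\partial\Omega$. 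On $S$ this uses $v_0\gtrsim |x'|^{2+\frac2n(\alpha-1)}$, and on the rest of $\partial\Omega$ it uses $v_0>0$ together with the smallness of $\bar w$. Lemma~\ref{Lemma 2.2} then gives $\bar w\le v$, and $\{\bar w=0\}=\{x_n\le -A^{-N}|x'|^\alpha\}$ yields $h(x')\gtrsim |x'|^\alpha$.

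\textbf{Upper bound.} I need a supersolution $\overline w\ge v$ with $\{\overline w=0\}\supseteq\{x_n\le -C|x'|^\alpha\}$ near $0$. Scale invariance suggests the ansatz $\overline w=|x'|^{2+\frac2n(\alpha-1)}G(x_n/|x'|^\alpha)$ with $G=0$ on $t\le -1$, so that $\overline w$ vanishes on $\{x_n\le -|x'|^\alpha\}$. The requirements are: $\overline w$ convex, $\det D^2\overline w\le 1$ on $\{\overline w>0\}$, and $\overline w\ge v_0$ on $\partial\Omega$ after a vertical stretch $x_n\mapsto \epsilon x_n$ and a constant multiple.

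Convexity forces a condition near $t=-1$. There the formula in Lemma~\ref{Lemma 4.2} makes $\det D^2\overline w$ comparable to $(G'')G-c(G')^2$ times powers of $G$. The choice $G=(t+1)^{p}$ with $p\ge 2$ keeps this nonnegative but tending to $0$, which is exactly what a supersolution wants. I would take $G(t)=K(t+1)^2$, check the sign of the bracket $(2+\frac2n(\alpha-1))(1+\frac2n(\alpha-1))G''G-\alpha(\alpha-1)tG''G'-(2-\alpha+\frac2n(\alpha-1))^2(G')^2$ for $t\in(-1,0]$, and adjust $p$ or add a linear term if the sign fails. Finally, scaling $x_n$ by a small $\epsilon$ reduces the determinant by $\epsilon^2$, which makes it $\le 1$ while keeping $\overline w\ge v_0$ on $S$, using $v_0\lesssim|x'|^{2+\frac2n(\alpha-1)}$.

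\textbf{Main obstacle.} I expect the upper-barrier verification to be the hard step: getting convexity in the radial and $x_n$ directions together with the determinant bound and the boundary comparison away from $S$, where a large multiple or the addition of $M(x_n)_+$-type terms is needed. If the direct construction fails, the fallback is the partial Legendre transform route of Lemma~\ref{Proposition 4.4.}. In that route I would build a subsolution $B(r,p_n)=\theta\, r\, p_n^{1/\alpha}+\theta' r^2 f(r/p_n^{1/\alpha})$ to $B_{nn}+(B_r/r)^{n-2}B_{rr}\ge0$, which gives $B'_{c p_n^{1/\alpha}}\subseteq\partial_{p'}U(0,p_n)$ and hence $h(x')\lesssim|x'|^\alpha$. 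Since the homogeneities now match without logarithms, choosing $f$ as in \eqref{4.47} should suffice.
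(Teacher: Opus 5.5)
Your overall strategy is the paper's: scale-invariant profiles $|x'|^{2+\frac2n(\alpha-1)}g(x_n/|x'|^\alpha)$, rescaling, and the comparison principle. The gap is in the step that carries the content. You need a profile $g$ that keeps $w$ convex all the way up to $\{x_n=0\}$ for \emph{every} $\alpha>1$, and neither of your profiles does this.

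Write $a=\alpha-1$, $c_1=(2+\frac2na)(1+\frac2na)$ and $c_2=(1-\frac{n-2}{n}a)^2$. Convexity of $w$ requires
$c_1g''g-\alpha a\,t\,g''g'-c_2(g')^2\ge0$ on $(-1,0)$.

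\textbf{Lower bound.} You reuse the glue of Lemma~\ref{Lemma 4.2}, $g=K(t+s_0)^2$ on $[t_0,0]$. There the bracket equals $2K^2(t+s_0)\{(c_1-2c_2)(t+s_0)-2\alpha a t\}$. At $t=0$ this is $2K^2s_0^2(c_1-2c_2)$, where $c_1-2c_2=(4-\frac2n)a-(2-\frac8n+\frac4{n^2})a^2$. For $n\ge4$ and $a>\frac{2n^2-n}{n^2-4n+2}$ (e.g.\ $n=5$, $\alpha=10$) this is negative. Hence $\det D^2w<0$ near $x_n=0$ and $w$ is not convex. Your justification, that the leading term carries the factor $\alpha-1$, only works while the $(\alpha-1)^2$ terms are negligible. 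That is exactly the restriction $\alpha<\alpha_0$ in Lemma~\ref{Lemma 4.2}, which the paper flags before Lemma~\ref{Lemma 7.2}.

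\textbf{Upper bound.} Your super-barrier hits the same obstruction. For $G=K(t+1)^p$ the bracket at $t=0$ is $K^2p[(p-1)c_1-pc_2]$. This is negative for every $p$ once $c_2\ge c_1$, which happens for $n\ge5$ whenever $(n-4)a^2-(2n+2)a-n\ge0$. In general, at $t=0$ the middle term drops out and one needs $c_1gg''\ge c_2(g')^2$, i.e.\ strict log-convexity when $c_2>c_1$. Near $t=-1$, however, the profile must vanish like a power. You give no mechanism to reconcile these two requirements. The Legendre fallback does not supply one either: your ansatz is not homogeneous under the invariant scaling $p'\mapsto\mu^{q-1}p'$, $p_n\mapsto\mu^\alpha p_n$, $U\mapsto\mu^qU$, with $q=2+\frac2n(\alpha-1)$. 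The term $r^2f(r/p_n^{1/\alpha})$ has degree $2(q-1)\neq q$ and a non-invariant argument, so the appeal to \eqref{4.47} is unjustified.

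\textbf{Missing idea: Lemma~\ref{Lemma 7.2}.} Keep $g=(t+1)_+^{\frac{n+1}{n}}$ on $(-\infty,t_0]$. On $[t_0,0)$, \emph{define} $g$ by the ODE \eqref{7.14}, which forces the bracket to be identically a small constant $\lambda^*$. The real work is \eqref{7.15}: $g'\le C(-t)^{-\varepsilon}$ with $\varepsilon<1$. This comes from $g_2'<\varepsilon g_2/(-t)$ when $\alpha\ge2$, and from $g_1''g_1<(g_1')^2$ when $1<\alpha<2$. It guarantees the solution survives up to $t=0$ with $g(0)$ finite and positive. The result is a single $w$ with $\det D^2w\sim1$ on $\{w>0\}$. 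It serves both as a sub-barrier, via $A^{-1}w(x',A^nx_n)$, and as a super-barrier, via $Aw(x',A^{-n}x_n)$.

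\textbf{Boundary comparison.} Your super-barrier also cannot be compared with $v$ on all of $\Omega$. It vanishes on $\{x_n\le -C|x'|^\alpha\}$, which meets $\partial\Omega$ where $v_0>0$. A large multiple does not help, and $M(x_n)_+$ vanishes identically in $\Omega\subset\{x_n<0\}$. The paper instead localizes to $B'_{l_0/2}\times(-L,0)$ with $L=2\sup_{\partial B'_{l_0/2}}h$, extending $v$ by $0$ below $\Gamma$. Then the extended function vanishes on the bottom of the cylinder.
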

\par 
Similarly to Lemma \ref{Lemma 4.2} in Section 4, we need to construct suitable barrier functions first. Since the barrier functions constructed with the method in \cite{2} in Lemma \ref{Lemma 4.2} were valid only for sufficiently small $(\alpha-1)$ in high dimensions, we need some modifications of the function $g$ here.
\begin{lemma}\label{Lemma 7.2} 
For $\forall \alpha>1$, there exists a convex function $w$ such that 
\begin{equation*}
\begin{split}
 \lambda\chi_{\{w>0\}} \le &\det D^2w\le \Lambda\chi_{\{w>0\}}\quad\text{in}\quad\{x_n<0\}\\
\lambda |x'|^{2+\frac{2}{n}(\alpha-1)}&\le w(x',0)\le  \Lambda |x'|^{2+\frac{2}{n}(\alpha-1)}
 \end{split}
\end{equation*}
where $\lambda, \Lambda$ are positive constants depending only on $n$ and $\alpha$. Moreover,  $w_n\ge 0$  holds and 
\begin{equation*}
\{w=0\}=\{x_n\le  -|x'|^\alpha\}.
\end{equation*}
\end{lemma}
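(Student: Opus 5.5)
We look for $w$ in the same self-similar form as in Lemma \ref{Lemma 4.2},
\begin{equation*}
w(x)=|x'|^{2+\frac{2}{n}(\alpha-1)}g\Big(\frac{x_n}{|x'|^\alpha}\Big),\qquad x_n\le 0,
\end{equation*}
with a profile $g\in C^1((-\infty,0])$ satisfying $g=0$ on $(-\infty,-1]$, $g>0$ on $(-1,0]$, and $g'\ge0$. Then $w_n=|x'|^{2+\frac2n(\alpha-1)-\alpha}g'\ge0$, $\{w=0\}=\{x_n\le-|x'|^\alpha\}$, and $w(x',0)=g(0)|x'|^{2+\frac2n(\alpha-1)}$. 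The second line of the lemma is therefore automatic, and everything reduces to choosing $g$.

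The exponent $2+\frac2n(\alpha-1)$ is exactly the homogeneity that makes $\det D^2w$ invariant under the anisotropic scaling $(x',x_n)\mapsto(sx',s^\alpha x_n)$. By the computation in the proof of Lemma \ref{Lemma 4.2}, $\det D^2 w$ is a function of $t=x_n/|x'|^\alpha$ alone:
\begin{align*}
\det D^2w&=\Big[\big(2+\tfrac2n(\alpha-1)\big)g-\alpha t g'\Big]^{n-2}\Big[\big(2+\tfrac2n(\alpha-1)\big)\big(1+\tfrac2n(\alpha-1)\big)gg''\\
&\qquad-\alpha(\alpha-1)tg'g''-\big(2-\alpha+\tfrac2n(\alpha-1)\big)^2(g')^2\Big]=:F[g](t).
\end{align*}
Convexity of $w$ follows, as in Lemma \ref{Lemma 4.2}, from $F[g]>0$, $\partial_r\tilde w\ge0$ and $\partial_{rr}\tilde w+\partial_{nn}\tilde w>0$. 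So the task is to find $g$ with $\lambda\le F[g](t)\le\Lambda$ on $(-1,0)$, together with a $C^1$ gluing and no negative singular part of the Monge--Amp\`ere measure across the gluing point or $t=-1$. Since the lemma only asks for two-sided bounds, we need not solve $F[g]=1$ exactly.

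Near $t=-1$ we take $g=c(t+1)^{\frac{n+1}{n}}$. Then $g\sim(t+1)^{\frac{n+1}{n}}$, $g'\sim(t+1)^{\frac1n}$, $g''\sim(t+1)^{\frac1n-1}$, so $gg''$ and $(g')^2$ are both of order $(t+1)^{\frac2n}$. The term $-\alpha(\alpha-1)tg'g''\approx\alpha(\alpha-1)g'g''$ is of order $(t+1)^{\frac2n-1}$ and is positive because $\alpha>1$, $t<0$. The first bracket is $\approx\alpha g'\sim(t+1)^{\frac1n}$ to the power $n-2$. Hence
\begin{equation*}
F[g]\sim c^{\,n}(t+1)^{\frac{n-2}{n}+\frac2n-1}=c^{\,n}\alpha(\alpha-1)\cdot O(1),
\end{equation*}
which is bounded above and below on $(-1,-1+\delta)$ for $\delta=\delta(n,\alpha)$ small. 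This is the modification promised in the text: for large $\alpha-1$ the mixed term dominates near the free boundary, rather than relying on the smallness of $\alpha-1$ as in Lemma \ref{Lemma 4.2}.

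On $[-1+\delta,0]$ we must keep $F[g]$ bounded positive. I would first try to extend $g$ as a convex quadratic or exponential $A(t+s)^2$, matched in $C^1$ at $-1+\delta$, with $g''$ jumping up so that the singular part of the Monge--Amp\`ere measure is nonnegative or absent (a $C^1$ function with a jump in $g''$ gives no singular mass). The obstacle is the negative term $-(2-\alpha+\frac2n(\alpha-1))^2(g')^2$: the positive part $gg''+(\alpha-1)|t|g'g''$ must dominate it uniformly, i.e. $g''\,(c_1g+c_2|t|g')\ge c_3(g')^2$ with a margin. If this fails, I would instead define $g$ on $(-1,0]$ as the solution of the second-order ODE $F[g]=1$ with initial behaviour $g\approx c(t+1)^{\frac{n+1}{n}}$ at $-1$, solved for $g''$:
\begin{equation*}
g''=\frac{g_*^{-(n-2)}+K(g')^2}{c_1g+\alpha(\alpha-1)|t|g'},\qquad g_*:=\big(2+\tfrac2n(\alpha-1)\big)g-\alpha tg',
\end{equation*}
where $K=(2-\alpha+\frac2n(\alpha-1))^2$. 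The key step is to show this solution exists on all of $(-1,0]$ with $g,g',g''>0$ and no blow-up. The denominator is positive and grows, while the numerator is controlled by a Riccati-type comparison on $[-1+\delta,0]$, a compact interval, so the bounds $\lambda,\Lambda$ then follow by continuity and compactness. I expect this global existence and positivity check, especially bounding $(g')^2/g''$ for large $\alpha$, to be the main difficulty.
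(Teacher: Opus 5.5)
There is a genuine gap. Your reduction to the profile $g$ matches the paper. So do the computation near $t=-1$ (where the mixed term $-\alpha(\alpha-1)tg'g''$ keeps $F[g]$ bounded above and below for every $\alpha>1$), the $C^1$ gluing at $t_0=-1+\delta$, and the idea of continuing $g$ by an ODE for $g''$.

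But the proposal stops at the one nontrivial point of the lemma. You must show that the ODE solution exists on all of $[t_0,0)$, with $g'$ integrable up to $t=0$, so that $g(0^-)$ is finite and positive; this is what gives $w(x',0)\sim|x'|^{2+\frac2n(\alpha-1)}$. Compactness of $[-1+\delta,0]$ does not help, since you do not know a priori that the solution reaches $t=0$.

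A Riccati-type comparison does not settle it either. Write $c_1=(2+\frac2n(\alpha-1))(1+\frac2n(\alpha-1))$ and bound your denominator below by $c_1g$, which gives $g''\le (C+K(g')^2)/(c_1g)$. For $n\ge5$ and $\alpha$ large one has $K=(1-\frac{n-2}{n}(\alpha-1))^2>c_1$. Then this inequality is satisfied, with the same data $g/g'=\frac{n}{n+1}\delta$ at $t_0$, by $A(T-t)^{-c_1/(K-c_1)}$. That function solves $gg''=\frac{K}{c_1}(g')^2$ and blows up at $T=t_0+\frac{nc_1}{(n+1)(K-c_1)}\delta$. So the inequality alone cannot rule out blow-up just after $t_0$.

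The missing idea is to keep the mixed term $\alpha(\alpha-1)|t|g'$ in the denominator and split according to $\alpha$. The paper prescribes the \emph{second} bracket of $F[g]$ to be a small constant $\lambda^*$, i.e.\ $g''=(\lambda^*+K(g')^2)/(c_1g+\alpha(\alpha-1)|t|g')$, so $g'$ is increasing.

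For $\alpha\ge2$ one has $K<\alpha(\alpha-1)$. Choosing $\varepsilon\in(K/(\alpha(\alpha-1)),1)$ and $\lambda^*\le(\varepsilon\alpha(\alpha-1)-K)g'(t_0)^2$ gives the linear inequality $g''\le\varepsilon g'/|t|$. Hence $g'\le C|t|^{-\varepsilon}$, $g$ is bounded, and $|t|g'\to0$.

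For $1<\alpha<2$ one has $K\le1<c_1$, so $c_1gg''\le\lambda^*+K(g')^2<c_1(g')^2$. Thus $g'/g$ is nonincreasing, and Gronwall bounds $g$ and $g'$.

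Neither mechanism covers all $\alpha$: the first fails as $\alpha\to1^+$, the second for $n\ge5$ and large $\alpha$.

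The normalization also matters. With your choice $F[g]=1$ the forcing term $g_*^{-(n-2)}$ is not adjustable. At $t_0$ it is of order $c^{-(n-2)}\delta^{-(n-2)/n}$ while $(g')^2\sim c^2\delta^{2/n}$, so it cannot be absorbed as above unless $c$ is taken large. Prescribing the second bracket to equal $\lambda^*$ avoids this. Once non-blow-up is known, the first bracket $(2+\frac2n(\alpha-1))g-\alpha tg'$ is automatically bounded between positive constants on $[t_0,0)$, which gives $\lambda\le\det D^2w\le\Lambda$ there.
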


\begin{proof}
Let $t_0$ be a constant larger than $-1$, but close enough to $-1$. Consider a function $g\in  C\Big((-\infty,0]\Big)\bigcap C^1\Big((-\infty,0)\Big)$ such that $g(t)=(t+1)_+^\frac{n+1}{n}$ for $t\in(-\infty,t_0]$. 
In the following, we determine $g$ in $[t_0,0)$.  Define
\begin{equation*}
\tilde{w}(r,x_n):=
\begin{cases}
r^{2+\frac{2}{n}(\alpha-1)}g(\tilde s),\qquad&r>0,x_n\leq0\\
0,\qquad&r=0,x_n\leq0
\end{cases}, \quad \tilde s=\frac{x_n}{r^\alpha}. 
\end{equation*}
A straightforward computation shows that
\begin{equation*}
\frac{\tilde{w}_r}{r}=r^{\frac{2}{n}(\alpha-1)}\biggl[\Big(2+\frac{2}{n}(\alpha-1)\Big)g(\tilde{s})-\alpha\tilde{s}g'(\tilde{s})\biggr],
\end{equation*}
\begin{align}
\det D^2\tilde{w}(r,x_n)&=r^{-\frac{2n-4}{n}(\alpha-1)}\Biggl[\Big(2+\frac{2}{n}(\alpha-1)\Big)\Big(1+\frac{2}{n}(\alpha-1)\Big)g''g\notag\\
&\qquad-\alpha(\alpha-1)\tilde{s}g''g'-\Big(1-\frac{n-2}{n}(\alpha-1)\Big)^2(g')^2\Biggr],\notag
\end{align}
for $r>0$, $x_n<0$ and $\tilde{s}\neq t_0,-1$. Define $w(x):=\tilde{w}(|x'|,x_n)$, then $w\in C^1(\{x_n<0\})$, and
\begin{align}
\det D^2w(x)=&\Big((\frac{\tilde{w}_r}{r})^{n-2}\cdot\det D^2\tilde{w}\Big)(|x'|,x_n)\notag\\
=&\Biggl[\Big(2+\frac{2}{n}(\alpha-1)\Big)g(s)-\alpha sg'(s)\Biggr]^{n-2}\cdot\notag\\
&\Biggl[\Big(2+\frac{2}{n}(\alpha-1)\Big)\Big(1+\frac{2}{n}(\alpha-1)\Big)g''(s)g(s)-\alpha(\alpha-1)sg''(s)g'(s)\notag\\
&-\Big(1-\frac{n-2}{n}(\alpha-1)\Big)^2\cdot\Big(g'(s)\Big)^2\Biggr]
\label{9.1}
\end{align}
for $s\neq-1,t_0$, where $s=\frac{x_n}{|x'|^\alpha}$. In particular,
\begin{align}
\det D^2w(x)&=\frac{n+1}{n^2}\Biggl[\Big(2+\frac{2}{n}(\alpha-1)\Big)(s+1)-\frac{n+1}{n}\alpha s\Biggr]^\frac{2-n}{n}\cdot\notag\\
&\qquad\Biggl[\Big(2+\frac{2}{n}(\alpha-1)\Big)\Big(1+\frac{2}{n}(\alpha-1)\Big)(s+1)\notag\\
&\qquad-\alpha(\alpha-1)s\cdot\frac{n+1}{n}-\Big(1-\frac{n-2}{n}(\alpha-1)\Big)^2(n+1)(s+1)\Biggr]\sim1
\label{7.8}
\end{align}
for $s\in(-1,t_0)$, since $t_0$ is close enough to $-1$.
\par The remaining part of the proof can be transformed to an initial value problem for first order ODE systems. Let $\lambda^*$ be a sufficiently small positive constant and consider the initial value problem for ODE systems
 \begin{equation}
 \left\{
 \begin{aligned}
 &g_1'(t)=g_2(t)\\
 &g_2'(t)=\frac{\lambda^*+\Big(1-\frac{n-2}{n}(\alpha-1)\Big)^2\cdot(g_2(t))^2}{\Big(2+\frac{2}{n}(\alpha-1)\Big)\Big(1+\frac{2}{n}(\alpha-1)\Big)g_1(t)-\alpha(\alpha-1)tg_2(t)}\\
 &g_1(t_0)=(1+t_0)^\frac{n+1}{n}\\
 &g_2(t_0)=\frac{n+1}{n}(t_0+1)^\frac{1}{n}.
 \end{aligned}\\
 \right.
 \label{7.14}
 \end{equation}
The solutions of \eqref{7.14} exist in $[t_0,0)$. In fact, once we prove
\begin{equation}
g_2(t)<C(-t)^{-\varepsilon},\quad\forall t\in(t_0,0)
\label{7.15}
\end{equation}
for some constant $C>0$ and $\varepsilon\in(0,1)$, let $g=g_1$ in $[t_0,0)$, then $g\in C\Big((-\infty,0]\Big)\bigcap$\\$C^1\Big((-\infty,0)\Big)$, $\Big(2+\frac{2}{n}(\alpha-1)\Big)g(s)-\alpha sg'(s)\sim1$ for $s\in[t_0,0)$, and by the second equality in \eqref{7.14}, $\Big(2+\frac{2}{n}(\alpha-1)\Big)\Big(1+\frac{2}{n}(\alpha-1)\Big)g''(s)g(s)-\alpha(\alpha-1)sg''(s)g'(s)$\\$-\Big(1-\frac{n-2}{n}(\alpha-1)\Big)^2\cdot\Big(g'(s)\Big)^2=\lambda^*$ for $s\in(t_0,1)$. By \eqref{9.1}, we have
\begin{equation}
\det D^2w\sim1\quad\text{for}\quad s\in(t_0,0).
\label{9.2}
\end{equation}
And by \eqref{7.8} \eqref{9.2}, and the construction of $w$, $w$ satisfies all the requirements in Lemma \ref{Lemma 7.2}. So it suffices to prove \eqref{7.15}.\\
Case I: $\alpha\geq2$.\\
Let
\begin{equation*}
\varepsilon\in\biggl(\frac{\Big(1-\frac{n-2}{n}(\alpha-1)\Big)^2}{\alpha(\alpha-1)},1\biggr)
\end{equation*}
Then
\begin{equation*}
g_2'(t)<\frac{\varepsilon g_2(t)}{-t},\quad\forall t\in[t_0,0),
\end{equation*}
thus
\begin{equation*}
g_2(t)\leq (-t)^{-\varepsilon}(-t_0)^\varepsilon g_2(t_0),\quad\forall t\in[t_0,0),
\end{equation*}
and the proof is completed.\\
Case II: $1<\alpha<2$.\\
Then
\begin{equation*}
g_1''g_1<(g_1')^2\quad\text{in}\quad(t_0,0).
\end{equation*}
Thus $\frac{g_1'}{g_1}$ is non-increasing in $[t_0,0)$, and $g_1'\leq C^*g_1$ in $[t_0,0)$ for some positive constant $C^*$. Then $e^{-C^*t}g_1(t)$ in non-increasing in $[t_0,0)$, and $g_2=g_1'$ is bounded in $[t_0,0)$, the proof is also completed.
\end{proof}
With the barrier function, we can now show the sharp asymptotic behavior of free boundary near $0$ now.
\begin{proof}[Proof of Proposition \ref{Proposition 7.1}]
To show the lower bound $h(x')\gtrsim|x'|^\alpha$, the approach applied in the proof of Corollary \ref{Corollary 4.3} would be still valid and sufficient.\\
To show the upper bound $h(x')\lesssim|x'|^\alpha$, let
\begin{equation*}
\tilde{v}(x):=
\begin{cases}
v(x)&,\quad x_n>-h(x')\\
0&,\quad x_n\leq-h(x')
\end{cases}
\end{equation*}
be a function on $\overline{B_{\frac{l_0}{2}}'}\times(-\infty,0]$. Then $\tilde{v}$ is an Alexandrov solution of
\begin{equation*}
\det D^2\tilde{v}=\chi_{\{\tilde{v}>0\}}\quad\text{in}\quad B_{\frac{l_0}{2}}'\times(-\infty,0).
\end{equation*}
Let $L=2\sup\limits_{\partial B_{\frac{l_0}{2}}'}h$, $w$ be as in Lemma \ref{Lemma 7.2}, and
\begin{equation*}
\tilde{w}(x):=Aw(x',A^{-n}x_n),
\end{equation*}
where $A$ is a sufficiently large constant. Then, in $B_{\frac{l_0}{2}}'\times(-L,0)$, we have
\begin{equation*}
\det D^2\tilde{v}=\chi_{\{\tilde{v}>0\}},\qquad\det D^2\tilde{w}\leq\chi_{\{\tilde{w}>0\}}
\end{equation*}
and we have
\begin{equation*}
\tilde{v}\leq\tilde{w}\quad\text{on}\quad\partial\Big(B_{\frac{l_0}{2}}'\times(-L,0)\Big).
\end{equation*}
Hence
\begin{equation*}
\tilde{v}\leq\tilde{w}\quad\text{in}\quad B_{\frac{l_0}{2}}'\times(-L,0)
\end{equation*}
by comparison principle and thus
\begin{equation*}
h(x')\lesssim|x'|^\alpha.
\end{equation*}
\end{proof}
Next we prove the main result, Theorem \ref{Theorem 1.2.}. We just sketch the proof since it is based on standard blow-up argument.
\begin{proof}[Proof of Theorem 1.2]
It suffices to show
\begin{equation}
|D^2h(x')|\lesssim |x'|^{\alpha-2}\quad\text{for}\quad0<|x'|<l_0.
\label{7.27}
\end{equation}
Let
\begin{equation*}
v^r(x):=r^{-2-\frac{2}{n}(\alpha-1)}v(rx',r^\alpha x_n),\quad r>0.
\end{equation*}
In fact, \eqref{7.27} can be obtained based on the following observations.
\begin{itemize}
\item For any $\{r_k\}_{k=1}^\infty\subseteq(0,\frac{1}{2})$ with $r_k\rightarrow0$, passing to the limit if necessary, $v^{r_k}$ converges locally uniformly to a half-space solution $\tilde{v}$ to the obstacle problem $\det D^2v=\chi_{\{\tilde{v}>0\}}$ in $\{x_n<0\}$. And $\partial\{\tilde{v}=0\}$ is the graph of $-\tilde{h}$ for some convex function $h$ defined on $\mathbb{R}^{n-1}$ with $\tilde{h}(x')\sim|x'|^\alpha$.
\item Fix $x_0'$, let $r_0=|x_0'|$, then the change of variables $y=(\frac{1}{r_0}x',\frac{1}{r_0^\alpha}x_n)$ changes the point $(x_0',-h(x_0'))$ on $\Gamma$ to the point $y_0=(\frac{1}{r_0}x_0',-\frac{h(x_0')}{r_0^\alpha})$ on $\partial\{v^{r_0}=0\}$. Since $-y_{0,n}\sim1$, the point $y_0$ can be dealt with standard method, for instance, the argument in \cite{1}.
\end{itemize}
\end{proof}
\section*{Acknowledgement and AI disclosure}
G. Huang is  supported in part by National Key R$\&$D Program of China 2025YFA1017600 and
National Natural Science Foundation of China under Grants 12526202.
\par ChatGPT 5.6 was only used to simplify the construction of barrier functions in Lemma \ref{Proposition 4.4.} and to check some details of calculations.

\small
\medskip\medskip
{\em Address and E-mail:}

\medskip\medskip
{\em Hanfei Han}

{\em School of Mathematical Sciences, Fudan University}

{\em 23110180010@m.fudan.edu.cn}

\medskip\medskip
{\em Genggeng Huang}

{\em School of Mathematical Sciences, Fudan University}

{\em genggenghuang@fudan.edu.cn}

\end{document}